\newcommand{\bbN}{\mathbb{N}}
\newcommand{\bbR}{\mathbb{R}}
\newcommand{\bbP}{\mathbb{P}}
\newcommand{\bbE}{\mathbb{E}}
\newcommand{\aps}[1]{\vert #1 \vert}
\newcommand{\APS}[1]{\left\vert #1 \right\vert}
\newcommand{\floor}[1]{\lfloor #1 \rfloor}
\newcommand{\OBL}[1]{\left( #1 \right)}
\newcommand{\UGL}[1]{\left[ #1 \right]}
\newcommand{\VIT}[1]{\left\{ #1 \right\}}
\newcommand{\FJADEF}[3]{{#1}:{#2}\to{#3}}
\let\ge\geqslant
\let\le\leqslant
\newtheorem{DEF}{Definition}[section]
\newtheorem{TM}[DEF]{Theorem}
\newtheorem{PROP}[DEF]{Proposition}
\newtheorem{LM}[DEF]{Lemma}
\newtheorem{COR}[DEF]{Corollary}
\newtheorem{REM}[DEF]{Remark}
\newtheorem{CON}[DEF]{Conjecture}
\newtheorem{QUE}[DEF]{Question}
\numberwithin{equation}{section}
\def\@tocline#1#2#3#4#5#6#7{\relax
  \ifnum #1>\c@tocdepth 
  \else
    \par \addpenalty\@secpenalty\addvspace{#2}%
    \begingroup \hyphenpenalty\@M
    \@ifempty{#4}{%
      \@tempdima\csname r@tocindent\number#1\endcsname\relax
    }{%
      \@tempdima#4\relax
    }%
    \parindent\z@ \leftskip#3\relax \advance\leftskip\@tempdima\relax
    \rightskip\@pnumwidth plus4em \parfillskip-\@pnumwidth
    #5\leavevmode\hskip-\@tempdima
      \ifcase #1
       \or\or \hskip 1em \or \hskip 2em \else \hskip 3em \fi%
      #6\nobreak\relax
    \dotfill\hbox to\@pnumwidth{\@tocpagenum{#7}}\par
    \nobreak
    \endgroup
  \fi}
\title[Asymptotic fairness in  voting]{On asymptotic fairness in voting with greedy sampling}
	\author{Abraham Gutierrez}
		\address{
		Abraham Gutierrez,
		Institute of Discrete Mathematics
		Graz University of Technology, Graz, Austria}
		\email{a.gutierrez@math.tugraz.at}
	\author{Sebastian M\"uller}
		\address{
		Sebastian M\"uller,
		Aix Marseille Universit\'e, CNRS, Centrale Marseille, I2M - UMR 7373, 13453 Marseille, France \&
		IOTA Foundation, 10405 Berlin, Germany
		}
		\email{sebastian.muller@univ-amu.fr}
	\author{Stjepan Šebek}
		\address{
		Stjepan Šebek,
		Department of Applied Mathematics,
		Faculty of Electrical Engineering and Computing, University of Zagreb, Croatia
		\&
		Institute of Discrete Mathematics
		Graz University of Technology, Graz, Austria}
		\email{stjepan.sebek@fer.hr}
\date{}
\begin{document}

\date{\today}

\begin{abstract}
The basic idea of voting protocols is that nodes query a sample of other nodes and adjust their own opinion throughout several rounds based on the proportion of the sampled opinions.
In the classic model, it is assumed that all nodes have the same weight. We study voting protocols for heterogeneous weights with respect to fairness. A voting protocol is \emph{fair} if the influence on the eventual outcome of a given participant is linear in its weight. Previous work used sampling with replacement to construct a fair voting scheme. However, it was shown that using greedy sampling, i.e., sampling with replacement until a given number of distinct elements is chosen, turns out to be more robust and performant. 

In this paper, we study fairness of voting protocols with greedy sampling and propose a voting scheme that is asymptotically fair for a broad class of weight distributions. We complement our theoretical findings with numerical results and present several open questions and conjectures. 
\end{abstract}

\subjclass[2010]{68M14,  94A20, 91A20}
\keywords{asymptotic fairness, consensus protocol, voting scheme, heterogeneous network, Sybil protection}

\maketitle

\section{Introduction}\label{sec:Introduction}
This article focuses on fairness in binary voting protocols.  Marquis de Condorcet observed the principle of voting in 1785 \cite{marquis}. Let us suppose there is a large population of voters, and each of them  independently votes ``correctly'' with probability $p > 1/2$. Then, the probability that the outcome of a majority vote is ``correct''  grows with the sample size and converges to one. In many applications, for instance, distributed computing, it is not feasible that every node queries every other participant and a centralized entity that collects the votes of every participant and communicates the final result is not desired. Natural decentralized solutions with low message complexity are the so-called voting consensus protocols.  Nodes query other nodes (only a sample of the entire population) about their current opinion and adjust their own opinion throughout several rounds based on the proportion of other opinions they have observed.

These protocols may achieve good performances in noiseless and undisturbed networks. However, their performances significantly decreases with noise \cite{GaKuLe:78,KaMo:07} or errors \cite{MoDiAm:04} and may completely fail in a Byzantine setting \cite{fpcsim}. Recently, \cite{fpc} introduced a variant of the standard voting protocol, the so-called fast probabilistic consensus (FPC), that is robust in Byzantine environment.  The performance of FPC was then studied using Monte-Carlo simulations in \cite{fpcsim}.   The above voting protocols are tailored for homogeneous networks where all votes have equal weight. In \cite{MPCS20, manaFPC} FPC was generalized to heterogeneous settings. These studies also revealed that how votes are sampled does have a considerable impact on the quality of the protocol.  

In a weighted or unweighted sampling, there are three different ways to choose a sample from a population:
\begin{enumerate}
\item choose with replacement until one has $m\in \bbN$ elements;
\item choose with replacement until one has $k\in \bbN$ distinct elements;
\item choose without replacement until one has $k=m$ (distinct) elements.
\end{enumerate}
The first method is usually referred to as sampling with replacement. While in the $1950$s, e.g., \cite{RaKh:58}, the second way was called sampling without replacement, sampling without replacement nowadays usually refers to the third possibility. To avoid any further confusion, we call in this paper the second possibility greedy sampling. 


Most voting protocols assume that every participant has the same weight. In heterogeneous situations, this does not reflect possible differences in weight or influence of the participants. An essential way in which weights improve voting protocols is by securing that the voting protocol is fair in the sense that the influence of a node on another node's opinion is proportional to its weight. This fairness is an essential feature of a voting protocol both for technical reasons, e.g., defense against Sybil attacks, and social reasons, e.g., participants may decide to leave the network if the voting protocol is unfair. Moreover, an unfair situation may incentivize participants to split their weight among several participants or increase their weight by pooling with other participants. These incentives may lead to undesired effects as fragility against Sybil attacks and centralization.

The construction of a fair voting consensus protocols with weights was recently discussed in \cite{MPCS20, manaFPC}. We consider a network with $N$ nodes (or participants), identified with the integers $\{1, \ldots, N\}$. The weights of the nodes are described by $(m_i)_{i \in \bbN}$ with $\sum_{i = 1}^{N} m_i = 1$, $m_i \ge 0$ being the weight of the node $i$. Every node $i$ has an initial state or opinion $s_{i}\in \{0, 1\}$. Then, at each (discrete) time step, each node chooses $k \in \bbN$ random nodes from the network and queries their opinions. This sampling can be done in one of the three ways described above. For instance, \cite{MPCS20} studied fairness in the case of sampling with replacement. The mathematical treatment of this case is the easiest of the three possibilities. However, simulations in \cite{manaFPC} strongly suggest that the performance of some consensus protocols are considerably better in the case of greedy sampling. The main object of our work is the mathematical analysis of weighted greedy sampling with respect to fairness. 

The weights of the node may enter at two points during the voting: in sampling and in weighting the collected votes or opinions. We consider a first weighting function $\FJADEF{f}{[0, \infty)}{[0, \infty)}$ that describes the weight of a node in the sampling. More precisely, a node $i$ is chosen with probability
\begin{equation}\label{eq:prob_from_mana}
	p_i := \frac{f(m_i)}{\sum_{j = 1}^{N} f(m_j)}.
\end{equation}
We call this function $f$  the sampling weight function. A natural weight function is $f\equiv id$; a node is chosen proportional to its weight.

As discussed later in the paper, we are interested in how the weights influence the voting if the number of nodes in the network tends to infinity. Therefore, we often consider the situation with an infinite number of nodes. The weights of these nodes are again  described by $(m_i)_{i \in \bbN}$ with $\sum_{i = 1}^{\infty} m_i = 1$. A network of $N$ nodes is then described by setting $m_i = 0$ for all $i > N$.

Once a node has chosen $k$ distinct elements, by greedy sampling,  it calculates a weighted mean opinion of these nodes. Let us denote by $S_{i}$ the multi-set of the sample for a given node $i$. The mean opinion of the sampled node is 
\begin{equation}\label{eq:meanOpinion}
\eta_{i} := \frac{\sum_{j \in S_{i}} g(m_{j}) s_{j}}{\sum_{j \in S_{i}} g(m_{j})},
\end{equation}
 where $\FJADEF{g}{[0, \infty)}{[0, \infty)}$ is a second weight function that we dub the  averaging weight function.  The pair $(f,g)$ of the two weight functions is called a voting scheme. 
 
In standard majority voting every  node adjusts its opinion as follows: if $\eta_{i}<1/2$ it updates its own opinion $s_{i}$ to $0$ and  if $\eta_{i}>1/2$ to $1$. The case of a draw, $\eta_{i}=1/2$, may be solved by randomization or choosing deterministically one of the options. After the opinion update, every node would re-sample and continue this procedure until some stopping condition is verified. In general, such a protocol aims that all nodes finally agree on one opinion or, in other words, find consensus.  As mentioned above, this kind of protocol works well in a non-faulty environment. However, it fails to reach consensus when some nodes do not follow the rules or even try to hinder the other nodes from reaching consensus. In this case, one speaks of honest nodes, the nodes which follow the protocol, and malicious nodes, the nodes that try to interfere. 
An additional feature was introduced by \cite{fpc} that makes this kind of consensus protocol robust to some given proportion of malicious nodes in the network.  

Let us briefly explain this crucial feature.  As in \cite{fpcsim, MPCS20, manaFPC} we consider a basic version of the FPC introduced in \cite{fpc}.  Let $U_{t}$, $t=1, 2,\ldots$ be i.i.d.~random variables with law $\mathrm{Unif}( [\beta, 1-\beta])$ for some parameter $\beta \in [0,1/2]$.  Every node $i$ has an opinion or state. We note $s_{i}(t)$ for the opinion of the node $i$ at time $t$. Opinions take values in $\{0,1\}$. Every node $i$ has an initial opinion $s_{i}(0)$.
The update rules for the opinion of a node $i$ is then given by
\begin{equation*}
s_{i}(1)=\left\{ \begin{array}{ll}
1, \mbox{ if } \eta_{i}(1) \geq \tau, \\
0, \mbox{ otherwise,}
\end{array}\right. 
\end{equation*}
for some $\tau \in [0,1]$.
For $t\geq 1$:
\begin{equation*}
s_{i}(t+1)=\left\{ \begin{array}{ll}
1, \mbox{ if } \eta_{i}(t+1) > U_{t}, \\
0, \mbox{ if } \eta_{i}(t+1) < U_{t}, \\
s_{i}(t), \mbox{ otherwise.}
\end{array}\right. 
\end{equation*}
Note that if $\tau=\beta=0.5$, FPC reduces to a standard majority consensus. It is important that the above sequence of random variables $U_t$ are the same for all nodes. The randomness of the threshold effectively reduces the capabilities of an attacker to control the opinions of honest nodes and it also increases the rate of convergence in the case of honest nodes only. 
Since in this paper we focus our attention mainly on the construction and analysis of the voting schemes $(f,g)$ we refer to 
\cite{fpcsim, MPCS20, manaFPC} for more details on FPC.

We concentrate mostly on the case $f \equiv id$ and $g \equiv 1$. For the voting scheme with sampling with replacement, it was shown in \cite[Theorem 1]{MPCS20} that  for $g \equiv 1$, i.e., when the opinions of different nodes are not additionally weighted after the nodes are sampled, the voting scheme $(f, g)$ is fair, see Definition \ref{def:fairness}, if and only if $f \equiv id$.  For $f \equiv id$, the probability of sampling a node $j$ satisfies $p_j = m_j$ because we assumed that $\sum_{i = 1}^{\infty}m_i = 1$. In many places we use $m_j$ and $p_j$ interchangeably, and both notations refer simultaneously to the weight of the node $j$ and the probability that the node $j$ is sampled.

Our primary goal is to verify whether the voting scheme $(id, 1)$ is fair in the case of greedy sampling.  We show in  Proposition \ref{prop:unfairness} that the voting scheme $(id, 1)$ is in general not fair.  For this reason, we introduce the notion of asymptotic fairness, see Definition \ref{def:asymptotic_fairness}. Even though the definition of asymptotic fairness is very general, the best example to keep in mind is when the number of nodes grows to infinity. An important question related to the robustness of the protocol against Sybil attacks is if the gain in influence on the voting obtained by splitting one node in ``infinitely'' many nodes is limited. 
	
We find a sufficient condition on the sequence of weight distributions $\{(m_i^{(n)})_{i \in \bbN}\}_{n \in \bbN}$ for asymptotic fairness, see Theorem \ref{thm:asymptotic_fairness}. In particular, this ensures robustness against Sybil attacks for wide classes of weight distributions. However, we also note that there are situations that are not asymptotically fair,  see Corollary \ref{cor:asymp_unfairness} and Remark \ref{rem:asymp_unfairness}.

A key ingredient of our proof is a preliminary result on greedy sampling. This is  a generalization of some of the results of \cite{RaKh:58}. More precisely, we obtain a formula for the joint distribution of the random vector $(A_k^{}(i), v_k^{})$. Here, the random variable $v_k^{}$, defined in \eqref{eq:def_of_vkP}, counts the number of samplings needed to sample $k$ different elements, and the random variable $A_k^{}(i)$, defined in \eqref{eq:def_of_AkP}, counts how many times in those $v_k^{}$ samplings, the node $i$ was sampled. The result of asymptotic fairness, Corollary \ref{cor:asymp_unfairness}, relies on a stochastic coupling that compares the nodes' influence before and after splitting. We use this coupling also in the simulations in Section \ref{sec:Simulations};  it considerably improves the convergence of our simulations by reducing the variance. 

Fairness plays a prominent role in many areas of science and applications. It is, therefore, not astonishing that it plays its part also in distributed ledger technologies. For instance, proof-of-work in Nakamoto consensus ensures that the probability of creating a new block is proportional to the computational power of a node; see \cite{ChPaCr:19} for an axiomatic approach to block rewards and further references. In proof-of-stake blockchains, the probability of creating a new block is usually proportional to the node's balance. However, this does not always have to be the optimal choice, \cite{LeRePi:20, PopovNxt}. 

Our initial motivation for this paper was to show that the consensus protocol used in the next generation protocol of IOTA, see \cite{coordicide}, is robust against splitting and merging. Both effects are not desirable in a decentralized and permissionless distributed system. We refer to \cite{MPCS20, manaFPC} for more details.  Besides this, we believe that the study of the different voting schemes is of theoretical interest and that many natural questions are still open, see Section \ref{sec:Simulations}.

We organize the article as follows. Section \ref{sec:Preliminaries} defines the key concepts of this paper: voting power, fairness, and asymptotic fairness. We also recall  Zipf's law that we use to model the weight distribution of the nodes. Even though our results are obtained in a  general setting, we discuss in several places how these results apply to the case of Zipf's law, see Subsection \ref{subsec:Zipf_law} and Figure \ref{fig:Zipf_law}. Section \ref{sec:Dist_of_sample_size} is devoted to studying greedy sampling on its own.  We find the joint probability distribution of sample size and occurrences of the nodes,  $(A_k^{}(i), v_k^{})$,  and develop several asymptotic results we use in the rest of the paper. In Section \ref{sec:Asymptotic_fairness} we show that the voting scheme $(id, 1)$ is in general not fair. However, we give a sufficient condition on the sequence of weight distributions that ensures asymptotic fairness. We provide an example where, without this condition, the voting scheme $(id, 1)$ is not asymptotically fair. Section \ref{sec:Simulations} contains a short simulation study. Besides illustrating the theoretical results developed in the paper, we investigate the cases when some of the assumptions we impose in our theoretical results are not met. Last but not least, we present some open problems and conjectures in  \ref{sec:Simulations}. To keep the presentation as clear as possible, we present some technical results in the Appendix \ref{sec:Appendix}.

\section{Preliminaries}\label{sec:Preliminaries}

\subsection{Main definitions}
We now introduce this paper's key concepts: greedy sampling, voting scheme, voting power, fairness, and asymptotic fairness.

We start with defining greedy sampling. We consider  a probability distribution $P = (p_i)_{i \in \bbN}$  on $\bbN$ and an integer $k \in \bbN$. We sample with replacement until $k$ different nodes (or integers) are chosen.   The number of samplings needed to choose $k$ different nodes is given by
\begin{align}\label{eq:def_of_vkP}
	\begin{aligned}
		v_{k}:=v_k^{(P)} := 
		& \textnormal{ the number of samplings with replacement from  } \\
		& \textnormal{ distribution } P \textnormal{ until } k \textnormal{ different nodes are sampled}.
	\end{aligned}
\end{align}
The outcome of a sampling will be denoted by the multi-set
\begin{equation*}
		S :=\{ a_1, a_2, \ldots, a_{v_{k}} \};
\end{equation*}
here the $a_{i}$'s take values in $\bbN$.
Furthermore, for any $i \in \bbN$, let
	\begin{equation}\label{eq:def_of_AkP}
		A_k(i) := A_k^{(P)}(i) := \# \{j \in \{1, 2, \ldots, v_{k}\} : a_j = i\}
	\end{equation}
	be the number of occurrences of $i$  in the multi-set  $S=\{a_1, a_2, \ldots, a_{v_{k}}\}$. 

Every node $i$ is assigned a weight $m_{i}$. Together with a function $\FJADEF{f}{[0, \infty)}{[0, \infty)}$, that we call sampling weight function, the weights define a probability distribution $P = (p_i)_{i \in \bbN}$  on $\bbN$ by
\begin{equation*}
	p_i = \frac{f(m_i)}{\sum_{j= 1}^{\infty} f(m_j)}.
\end{equation*}


We consider a second weight function $\FJADEF{g}{[0, \infty)}{[0, \infty)}$, the averaging weight function, that weighs the samples opinions, see  Equation (\ref{eq:meanOpinion}). The couple $(f,g)$ is called a voting scheme. We first consider general voting schemes but focus later on the voting scheme $(f, g)$ with $f \equiv id$ and $g \equiv 1$. 

Let us denote by $S_{i}$ the multi-set of the sample for a given node $i$.
To define the voting powers of the nodes, we recall the definition of the mean opinion, Equation (\ref{eq:meanOpinion}), 
\begin{equation*}
\eta_{i} = \frac{\sum_{j \in S_{i}} g(m_{j}) s_{j}}{\sum_{j \in S_{i}} g(m_{j})}.
\end{equation*}
The multi-set $S_{i}$ is a random variable. Taking  expectation leads to 
\begin{equation*}
\bbE[\eta_{i}] =\bbE\left[  \sum_{j\in \bbN}  \frac{g(m_{j}) A_{k}(j) s_j}{\sum_{\ell\in \bbN} {g(m_{\ell}) A_{k}(\ell)}}\right]= \sum_{j\in \bbN} \bbE\left[  \frac{g(m_{j}) A_{k}(j) s_j}{\sum_{\ell\in \bbN} {g(m_{\ell}) A_{k}(\ell)}}\right].
\end{equation*}

Hence, the influence of the node $j$ on another node's mean opinion is measured by the corresponding coefficient in the above series.

\begin{DEF}[Voting power]
The \textnormal{voting power} of a node $i$ is defined as
	\begin{equation*}
		V_k(i) := V_k^{(P)}(m_{i}) :=  \bbE\left[  \frac{g(m_{i}) A_{k}(i)}{\sum_{\ell\in \bbN} {g(m_{\ell}) A_{k}(\ell)}}\right].
	\end{equation*}
If $g \equiv 1$, the voting power reduces to
\begin{equation*}
		V_k(i) = V_k^{(P)}(i) =  \bbE\UGL{\frac{A_k(i)}{v_{k}}}.
	\end{equation*} 
\end{DEF}

\begin{DEF}[$r$-splitting]\label{def:rsplitting}
Let $(m_i)_{i \in \bbN}$ be the weight distribution of the nodes and let $k \in \bbN$ be a positive integer. We fix some node $i$ and $r\in\bbN$. We say that   $m_{i_1^{(r)}}, \ldots, m_{i_r^{(r)}} > 0$ is an $r$-splitting of node $i$ if $m_i = \sum_{j = 1}^r m_{i_j^{(r)}}$. The probability distribution $P = (p_i)_{i \in \bbN}$, given in \eqref{eq:prob_from_mana}, changes to the probability distribution of the weights with $r$-splitting of node $i$ given by
	\begin{equation*}
		\widehat{P}_{r,i} := 
(\widehat{p}_1, \ldots, \widehat{p}_{i - 1}, \widehat{p}_{i_1^{(r)}}, \ldots, \widehat{p}_{i_r^{(r)}}, \widehat{p}_{i + 1}, \ldots)
	\end{equation*}
 on  $\{1, \ldots, i - 1, i_1^{(r)}, \ldots, i_r^{(r)}, i + 1, \ldots\}$, where
\begin{align*}
	\widehat{p}_j
	& = \frac{f(m_j)}{\sum_{u \in \bbN \setminus \{i\}} f(m_u) + \sum_{u = 1}^r f(m_{i_u^{(r)}})}, \qquad j \neq i, \\
	\widehat{p}_{i_j^{(r)}} & = \frac{f(m_{i_j^{(r)}})}{\sum_{u \in \bbN \setminus \{i\}} f(m_u) + \sum_{u = 1}^r f(m_{i_u^{(r)}})}, \qquad j \in \{1, 2, \ldots, r\}.
\end{align*}
\end{DEF}

\begin{DEF}[Fairness]\label{def:fairness}
We say that a voting scheme $(f, g)$ is
	\begin{enumerate}[(i)]
		\item \textnormal{robust to splitting into $r$ nodes} if for all nodes $i$ and all $r$-splittings $m_{i_1^{(r)}}, \ldots, m_{i_r^{(r)}}$ we have
\begin{equation}\label{eq:robustness_to_splitting}
			V_k^{(P)}(m_i) \ge \sum_{j = 1}^r V_k^{(\widehat{P}_{r,i})}(m_{i_j^{(r)}});
		\end{equation}
		\item \textnormal{robust to merging of $r$ nodes} if for all nodes $i$ and all $r$-splittings $m_{i_1^{(r)}}, \ldots, m_{i_r^{(r)}}$ we have	
		\begin{equation}\label{eq:robustness_to_merging}
			V_k^{(P)}(m_i) \le \sum_{j = 1}^r V_k^{(\widehat{P}_{r,i})}(m_{i_j^{(r)}}).
		\end{equation}
	\end{enumerate}
If Relation \eqref{eq:robustness_to_splitting} holds for every $r \in \bbN$, we say that the voting scheme $(f, g)$ is \textnormal{robust to splitting} and if Relation \eqref{eq:robustness_to_merging} holds for every $r \in \bbN$, we say that the voting scheme $(f, g)$ is \textnormal{robust to merging}. If a voting scheme $(f, g)$ is robust to splitting and robust to merging, that is, if for every  node $i$ and every $r \in \bbN$ and every $r$-splitting  $m_{i_1^{(r)}}, \ldots, m_{i_r^{(r)}} > 0$  it holds that
	\begin{equation*}
		V_k^{(P)}(m_i) = \sum_{j = 1}^r V_k^{(\widehat{P}_{r,i})}(m_{i_j^{(r)}}),
	\end{equation*}
we say that the voting scheme $(f, g)$ is \textnormal{fair}.
\end{DEF}

To generalize the above definitions to sequences of weights and to define asymptotic fairness, we first define sequence of $r$-splittings.
\begin{DEF}[Sequence of $r$-splittings]\label{def:seqrsplitting}
Let $k \in \bbN$ be a positive integer and let $\{(m_i^{(n)})_{i \in \bbN}\}_{n \in \bbN}$ be a sequence of weight distributions. Furthermore, for a fixed positive integer $r \in \bbN$ and  a fixed node $i$, we say that $m_{i_1^{(r)}}^{(n)}, \ldots, m_{i_r^{(r)}}^{(n)} > 0$ is a sequence of $r$-splittings of node $i$ if $m_i^{(n)} = \sum_{j = 1}^r m_{i_j^{(r)}}^{(n)}$. 
We define the sequence of probability distributions on the set $\{1, \ldots, i - 1, i_1^{(r)}, \ldots, i_r^{(r)}, i + 1, \ldots\}$, by
\begin{equation*}
		\widehat{P}_{r,i}^{(n)} := 
		(\widehat{p}_1^{(n)}, \ldots, \widehat{p}_{i - 1}^{(n)}, \widehat{p}_{i_1^{(r)}}^{(n)}, \ldots, \widehat{p}_{i_r^{(r)}}^{(n)}, \widehat{p}_{i + 1}^{(n)}, \ldots),
\end{equation*}		
with
\begin{align*}
		\widehat{p}_j^{(n)}
		& = \frac{f(m_j^{(n)})}{\sum_{u \in \bbN \setminus \{i\}} f(m_u^{(n)}) + \sum_{u = 1}^r f(m_{i_u^{(r)}}^{(n)})}, \qquad j \neq i, \\
		\widehat{p}_{i_j^{(r)}}^{(n)} & = \frac{f(m_{i_j^{(r)}}^{(n)})}{\sum_{u \in \bbN \setminus \{i\}} f(m_u^{(n)}) + \sum_{u = 1}^r f(m_{i_u^{(r)}}^{(n)})}, \qquad j \in \{1, 2, \ldots, r\}.
	\end{align*}
\end{DEF}

\begin{DEF}[Asymptotic fairness]\label{def:asymptotic_fairness}
	We say that a voting scheme $(f, g)$ is \textnormal{asymptotically fair for the sequence} $\{(m_i^{(n)})_{i \in \bbN}\}_{n \in \bbN}$ of weight distributions if for all $r$ and all nodes $i$,
	\begin{equation*}
		\APS{\sum_{j = 1}^r V_k^{(\widehat{P}_{r,i}^{(n)})}(m_{i_j^{(r)}}^{(n)}) - V_k^{(P^{(n)})}(m_i^{(n)})} \xrightarrow[n \rightarrow \infty]{} 0,
	\end{equation*}
for all sequences of $r$-splittings of node $i$. 
\end{DEF}

\begin{REM}
	The canonical class of examples of the sequence $\{(m_i^{(n)})_{i \in \bbN}\}_{n \in \bbN}$ of weight distributions is the one where $m_i^{(n)} = 0$ for all $i > n$. With these type of sequences of weight distributions, we can model the scenario where the number of nodes in the network grows to infinity.
\end{REM}

\subsection{Zipf's law}\label{subsec:Zipf_law}

We do not assume any particular weight distribution in our theoretical results. However, for examples and numerical simulation, it is essential to consider specific weight distributions. 

Probably the most appropriate modelings of  weight distributions rely on universality phenomena. The most famous example of this universality phenomenon is the central limit theorem. While the central limit theorem is suited to describe statistics where values are of the same order of magnitude, it is not appropriate to model more heterogeneous situations where the values might differ in several orders of magnitude. A Zipf law may describe heterogeneous weight distributions.  Zipf's law was first observed in quantitative linguistics, stating that any word's frequency is inversely proportional to its rank in the corresponding frequency table. Nowadays, many fields claim that specific data fits a Zipf law; e.g., city populations, internet traffic data, the formation of peer-to-peer communities, company sizes, and science citations. We refer to \cite{Li2002ZipfsLE} for a brief introduction and more references, and to \cite{Adamic2002ZipfsLA} for the appearance of Zipf's law in the internet and computer networks. We also refer to \cite{Tao} for a more mathematical introduction to this topic.

There is a ``rule of thumb'' for situations when a Zipf law may govern the asymptotic distribution of a data or statistic: variables 
\begin{itemize}
	\item[(1)] take values as positive numbers;
	\item[(2)] range over many different orders of magnitude;
	\item[(3)] arise from a complicated combination of largely independent factors; and
	\item[(4)] have not been artificially rounded, truncated, or otherwise constrained in size.
\end{itemize}
We consider a situation with $n$ elements or nodes. Zipf's law  predicts that the (normalized) frequency of the node of rank $k$  is given by \begin{equation}\label{eq:Zipf_law}
	y(k) := \frac{k^{-s}}{ \sum_{i = 1}^n i^{-s}},
\end{equation}
where $s\in [0,\infty)$ is the Zipf parameter. Since the value $y(k)$ in \eqref{eq:Zipf_law} only depends on two parameters, $s$ and $n$, this provides a convenient model to investigate the performance of a voting protocol in a wide range of network situations. For instance, nodes with equal weight can be modeled by choosing $s = 0$,  while more centralized networks can be described with parameters $s > 1$.

A convenient way to observe a Zipf law is by plotting the data on a log-log graph, with the axes being log(rank order) and log(value). The data conforms to a Zipf law to the extent that the plot is linear, and the value of $s$ may be estimated  using linear regression. We note that this visual inspection of the log-log plot of the ranked data is not a rigorous procedure. We refer to the literature on how to detect systematic modulation of the basic Zipf law and on how to fit more accurate models. In this work, we deal with distributions that are ``Zipf like'' without verifying certain test conditions.

For instance, Figure \ref{fig:Zipf_law} shows the distribution of IOTA for the top $10.000$ richest addresses with a fitted Zipf law.
\begin{figure}[ht]
	\centering
	\includegraphics[scale=0.4]{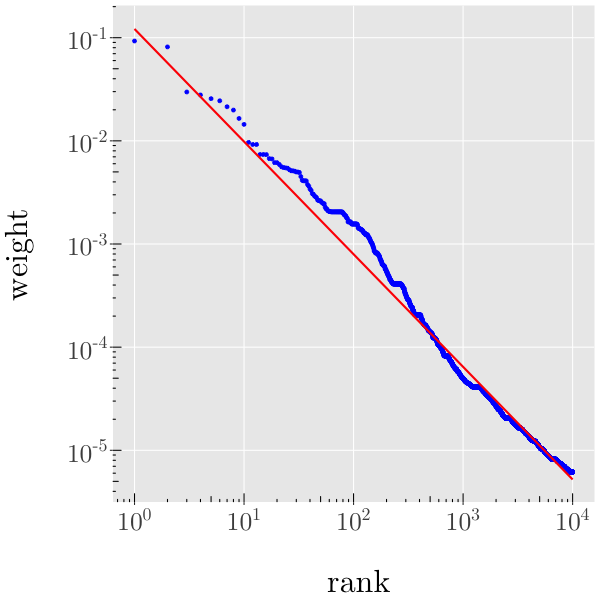}
	\caption{Relative distribution of top  $10.000$ IOTA addresses with a fitted Zipf distribution with $s = 1.1$, July 2020.}\label{fig:Zipf_law}
\end{figure}

Due to the universality phenomenon, the plausibility of hypotheses 1) - 4) above, and Figure \ref{fig:Zipf_law},  we assume the weight distribution to follow a Zipf law if we want to specify a weight distribution. To be more precise, we assume that for every $n \in \bbN$ and some parameter $s  > 0$
\begin{equation}\label{eq:seq_of_Zipf_law}
	p_j^{(n)} := 
	\begin{cases}
		\frac{1/j^s}{\sum_{i = 1}^n (1/i^s)}, \quad & j \le n, \\
		\hfil 0, \quad & j > n,
	\end{cases}
\end{equation}
where $P^{(n)} = (p_j^{(n)})_{j \in \bbN}$ is the weight distribution among the nodes in the network when the total number of nodes is $n$. Notice that, for a fixed $j$, the sequence $(p_j^{(n)})_{n \in \bbN}$ is decreasing in $n$. Furthermore, since $\sum_{i = 1}^{\infty} (1/i^s)$ diverges for $s \le 1$ the sequence $(p_j^{(n)})_{n \in \bbN}$ converges to $0$ in this case (when $n$ goes to infinity). On the other hand, if the parameter $s$ is strictly larger than $1$, the sequence $(p_j^{(n)})_{n \in \bbN}$ converges to a positive number (when $n$ goes to infinity).

\section{Greedy weighted sampling}\label{sec:Dist_of_sample_size}

We consider sampling with replacement until $k$ different  elements are chosen. The actual size of the sample is described by the random variable $v_{k}$. 

\begin{PROP}\label{prop:dist_of_v-k-P}
	Let $P = (p_i)_{i \in \bbN}$ be a probability distribution on $\bbN$, $k \in \bbN$ a positive integer and $v_{k}=v_k^{(P)}$ the random variable defined in \eqref{eq:def_of_vkP}. For every $v \in \{k, k + 1, k + 2, \ldots\}$ we have
	\begin{equation}\label{eq:dist_of_v-k-P}
		\bbP \OBL{v_k = v} = \sum_{i = 1}^{\infty} p_i \sum_{\substack{x_1 + \cdots + x_{k - 1} = v - 1 \\ x_1, \ldots, x_{k - 1} \ge 1}} \binom{v - 1}{x_1, \ldots, x_{k - 1}} \sum_{\substack{A \subset \bbN \setminus \{i\} \\ \aps{A} = k - 1}} (p_{a_1})^{x_1} \cdots (p_{a_{k - 1}})^{x_{k - 1}},
	\end{equation}
	where
\begin{equation}\label{eq:def_of_multinomial_coef}
		\binom{v - 1}{x_1, x_2, \ldots, x_{k - 1}} =
		\begin{cases}
			\frac{(v - 1)!}{x_1! x_2! \cdots x_{k - 1}!}, \quad & x_1 + x_2 + \ldots + x_{k - 1} = v - 1, \\
			\hfil 0, \quad & \textnormal{otherwise}.
		\end{cases}
	\end{equation}
\end{PROP}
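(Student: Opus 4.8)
The plan is to prove \eqref{eq:dist_of_v-k-P} by a direct decomposition of the event $\VIT{v_k = v}$ according to which element completes the $k$-th distinct value and which elements precede it. The key observation is that the number of distinct values observed is non-decreasing in the number of samplings and increases by at most one per sample; consequently $\VIT{v_k = v}$ coincides \emph{exactly} with the event that the first $v-1$ samples realize exactly $k-1$ distinct values while the $v$-th sample produces a value not seen before. (In particular, the ``first passage'' requirement is automatic: since after $v-1$ draws one has $k-1 < k$ distinct values, step $v$ is necessarily the first time $k$ distinct values occur.) This reformulation is the crux, as it replaces the first-passage description of $v_k$ by an explicit countable disjoint-union description that can be summed term by term.

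First I would condition on the value $i \in \bbN$ drawn at the $v$-th step, the newly completing element. By independence of the draws, the probability that the $v$-th draw equals $i$ is $p_i$, and this factor splits off from the analysis of the first $v-1$ draws. Conditioned on this, the first $v-1$ draws must take values in some $(k-1)$-element set $A \subset \bbN \setminus \VIT{i}$, with every element of $A$ appearing at least once. Writing $A = \VIT{a_1, \ldots, a_{k-1}}$ and letting $x_j \ge 1$ denote the number of occurrences of $a_j$ among the first $v-1$ draws, the constraint $x_1 + \cdots + x_{k-1} = v-1$ is forced.

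Next I would count, for fixed $A$ and fixed multiplicities $(x_1, \ldots, x_{k-1})$, the contribution to the probability. By independence, any specific ordered sequence of the first $v-1$ draws realizing these multiplicities has probability $(p_{a_1})^{x_1} \cdots (p_{a_{k-1}})^{x_{k-1}}$, and the number of such orderings is precisely the multinomial coefficient $\binom{v-1}{x_1, \ldots, x_{k-1}}$ of \eqref{eq:def_of_multinomial_coef}. Summing over all admissible multiplicity vectors, then over all sets $A$, and finally over the completing element $i$, reassembles exactly the right-hand side of \eqref{eq:dist_of_v-k-P}. Here the requirement $x_j \ge 1$ encodes that $A$ is exactly the set of distinct values seen in the first $v-1$ draws (no element of $A$ is absent), while the restriction $A \subset \bbN \setminus \VIT{i}$ encodes that $i$ is genuinely new at step $v$.

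The main obstacle is not the combinatorics, which is routine once the event is recast, but the bookkeeping of the infinite sums: both the outer sum over $i \in \bbN$ and the inner sum over $(k-1)$-subsets $A$ of $\bbN \setminus \VIT{i}$ are countable but infinite. Since every summand is non-negative, all the series converge (they sum to $\bbP(v_k = v) \le 1$), and the interchanges of summation used to arrange the terms in the displayed order are justified by Tonelli's theorem for sums. I would therefore make the decomposition of $\VIT{v_k = v}$ explicit as a countable disjoint union over triples $(i, A, (x_1, \ldots, x_{k-1}))$ and invoke countable additivity together with non-negativity to sum the probabilities in any order, which yields \eqref{eq:dist_of_v-k-P}.
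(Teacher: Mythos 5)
Your proposal is correct and follows essentially the same route as the paper: decompose the event $\VIT{v_k = v}$ by the last (newly completing) element $i$, the $(k-1)$-subset $A$ of earlier distinct values, their multiplicities $x_1, \ldots, x_{k-1}$ summing to $v-1$, and the orderings counted by the multinomial coefficient. Your explicit remarks that the first-passage condition is automatic and that Tonelli's theorem justifies the rearrangement of the infinite sums are welcome additions of rigor, but they do not change the argument.
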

\begin{proof}
We are sampling from the distribution $P$ until we sample $k$ different nodes. A first observation is that  the last node will be sampled only once. 
All the nodes that appear before the last one can be sampled more than once. We can construct such a sampling in the following way: first we choose a node $i \in \bbN$ that will be sampled the last, then we choose $k - 1$ different nodes $a_1, a_2, \ldots, a_{k - 1}$ from the set $\bbN \setminus \{i\}$ that will appear in the sequence before the last node and we choose positive integers $x_1, x_2, \ldots, x_{k - 1} \in \bbN$ that represent how many times each of the $k - 1$ nodes from the set $\{a_1, a_2, \ldots, a_{k - 1}\}$ will appear in the sampled sequence. Notice that $\sum_{i = 1}^{k - 1} x_i$ has to be equal to $v - 1$ because the total length of the sequence, including the last node $i$, has to be $v$. The last thing we need to choose is the permutation of the first $v - 1$ elements in the sequence which can be done in $\binom{v - 1}{x_1, x_2, \ldots, x_{k - 1}}$ ways. Summarizing, the probability of sampling a sequence where the last node is $i$ and first $k - 1$ nodes are $a_1, a_2, \ldots, a_{k - 1}$ and they appear $x_1, x_2, \ldots, x_{k - 1}$ times is
	\begin{equation*}
		p_i \binom{v}{x_1, x_2, \ldots, x_{k - 1}} (p_{a_1})^{x_1} (p_{a_2})^{x_2} \cdots (p_{a_{k - 1}})^{x_{k - 1}}.
	\end{equation*}
	Now, we need to sum this up with respect to all the possible values of the element $i$, all the possible sequences of $k - 1$ positive integers $x_1, x_2, \ldots, x_{k - 1}$ that sum up to $v - 1$ (i.e., all the partitions of the integer $v - 1$ into $k - 1$ parts) and all the subsets of $\bbN \setminus \{i\}$ of cardinality $k - 1$. This gives us exactly the expression from Equation \eqref{eq:dist_of_v-k-P}.
\end{proof}

\begin{REM}
The random variable $v_k^{(P)}$ was  studied in \cite{RaKh:58} in the case where the population is finite and elements have equal weight.  Therefore, Formula \eqref{eq:dist_of_v-k-P} is a generalization of \cite[Formula (16)]{RaKh:58}.
	
Another  random variable studied in \cite{RaKh:58} is the number of  different elements  in a sample with replacement of a fixed  size. To be precise, let $k \in \bbN$ be a positive integer and $P = (p_i)_{i \in \bbN}$ be a probability distribution on $\bbN$. Denote with
	\begin{align*}
		u_k^{(P)} = 
		& \textnormal{ the number of different nodes sampled in } k \\
		& \textnormal{ samplings with replacement from distribution } P.
	\end{align*}
The authors in \cite{RaKh:58} calculated the distribution of the random variable $u_k^{(P)}$, but again under the assumptions that the set from which the elements are sampled is finite and that all the elements are sampled with the same probability. Using analogous reasoning as in the proof of Proposition \ref{prop:dist_of_v-k-P}, for $u \in \{1, 2, \ldots, k\}$, we  get 
	\begin{equation*}
		\bbP(u_k^{(P)} = u) = \sum_{\substack{x_1 + \cdots + x_u = k \\ x_1, \ldots, x_u \ge 1}} \binom{k}{x_1, \ldots, x_u} \sum_{\substack{A \subset \bbN \\ \aps{A} = u}} (p_{a_1})^{x_1} \cdots (p_{a_u})^{x_u}.
	\end{equation*}
	This formula generalizes \cite[Formula (8)]{RaKh:58}.
\end{REM}

Using Proposition \ref{prop:dist_of_v-k-P}, we now find the distribution of the random vector $(A_k^{}(i), v_k)$ for all $i \in \bbN$. 

\begin{PROP}\label{prop:dist_of_Akp-Vkp}
	Let $P = (p_i)_{i \in \bbN}$ be a probability distribution on $\bbN$, $k \in \bbN$ a positive integer, $v_{k}=v_k^{(P)}$ the random variable defined in \eqref{eq:def_of_vkP} and $A_k^{}(i)=A_k^{(P)}(i)$ the random variable defined in \eqref{eq:def_of_AkP}. The support of the random vector $(A_k^{}(i), v_k^{})$ is
	\begin{equation*}
		\{(0, v) : v \ge k\} \cup \{(\ell, v) : 1 \le \ell \le  v - k + 1\}.
	\end{equation*}
For every node $i \in \bbN$ and every $(\ell, v)$ in the support of $(A_k^{}(i), v_k^{})$  we have
	\begin{equation}\label{eq:dist_of_Akp-Vkp}
		\bbP(A_k^{}(i) = \ell, v_k^{} = v) = 
		\begin{cases}
			\sum\limits_{\substack{j = 1 \\ j \neq i}}^{\infty} p_j \sum\limits_{\substack{x_1 + \cdots + x_{k - 1} = v - 1\\ x_1, \ldots, x_{k - 1} \ge 1}} \binom{v - 1}{x_1, \ldots, x_{k - 1}} \sum\limits_{\substack{A \subset \bbN \setminus \{i, j\} \\ \aps{A} = k - 1}} \prod\limits_{r = 1}^{k - 1} (p_{a_r})^{x_r}, &  \hspace{-0.2cm} \ell = 0, \\
			\sum\limits_{\substack{j = 1 \\ j \neq i}}^{\infty} p_j \Big(\sum\limits_{\substack{x_1 + \cdots + x_{k - 2}  = v - 2 \\ x_1, \ldots, x_{k - 2} \ge 1}} \binom{v - 1}{x_1, \ldots, x_{k - 2}, 1} p_i \sum\limits_{\substack{A \subset \bbN \setminus\{i, j\} \\ \aps{A} = k - 2}} \prod\limits_{r = 1}^{k - 2}(p_{a_r})^{x_r}\Big) \,+ \\
			\quad p_i \sum\limits_{\substack{x_1 + \cdots + x_{k - 1} = v - 1 \\ x_1, \ldots, x_{k - 1} \ge 1}} \binom{v - 1}{x_1, \ldots, x_{k - 1}} \sum\limits_{\substack{A \subset \bbN \setminus\{i\} \\ \aps{A} = k - 1}} \prod\limits_{r = 1}^{k - 1} (p_{a_r})^{x_r}, & \hspace{-0.2cm}  \ell = 1, \\
			\sum\limits_{\substack{j = 1 \\ j \neq i}}^{\infty} p_j \sum\limits_{\substack{x_1 + \cdots + x_{k - 2}  = v - \ell- 1 \\ x_1, \ldots, x_{k - 2} \ge 1}} \binom{v - 1}{x_1, \ldots, x_{k - 2}, \ell} (p_i)^\ell \sum\limits_{\substack{A \subset \bbN \setminus\{i, j\} \\ \aps{A} = k - 2}} \prod\limits_{r = 1}^{k - 2}(p_{a_r})^{x_r}, & \hspace{-0.2cm} \ell \ge 2.
		\end{cases}
	\end{equation}
\end{PROP}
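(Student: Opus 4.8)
The plan is to reuse the combinatorial decomposition behind Proposition~\ref{prop:dist_of_v-k-P}, now additionally tracking how often the distinguished node $i$ is drawn. The single fact that organizes the whole argument is the one already exploited there: at the instant sampling stops, the $k$-th distinct node has just appeared for the first time, so \emph{the last sampled node occurs exactly once}, while the other $k-1$ distinct nodes occupy the first $v-1$ positions with arbitrary positive multiplicities. Before the computation I would dispatch the support: if $A_k(i)=\ell\ge 1$, then besides the $\ell$ copies of $i$ the sample contains $k-1$ further distinct nodes, each appearing at least once, whence $v\ge \ell+(k-1)$, i.e. $\ell\le v-k+1$; if $\ell=0$ the only constraint is $v\ge k$. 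This gives the stated support.

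The computation then splits into three regimes according to $\ell$, and within each regime I would condition on the identity of the last node, using that it is sampled exactly once. For $\ell=0$ node $i$ is absent, the last node is some $j\ne i$, and the remaining $k-1$ distinct nodes are drawn from $\bbN\setminus\{i,j\}$; this is formula~\eqref{eq:dist_of_v-k-P} with $i$ simply barred from the sample, which is the first line. For $\ell=1$ the unique occurrence of $i$ is either the last draw or lies among the first $v-1$ positions, and these two events are disjoint and exhaustive: in the former $i$ is the last node and the $k-1$ preceding distinct nodes come from $\bbN\setminus\{i\}$ (the term $p_i\sum\cdots$), while in the latter the last node is some $j\ne i$, node $i$ fills a single one of the first $v-1$ slots (hence the entry $1$ in the multinomial and the factor $p_i$), and the other $k-2$ distinct nodes, drawn from $\bbN\setminus\{i,j\}$, have multiplicities summing to $v-2$. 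Summing the two events gives the second line. For $\ell\ge 2$, since the last node occurs once while $i$ occurs at least twice, $i$ cannot be last; the last node is therefore some $j\ne i$, node $i$ occupies $\ell$ of the first $v-1$ slots (producing $(p_i)^\ell$ and the entry $\ell$ in $\binom{v-1}{x_1,\ldots,x_{k-2},\ell}$), and the remaining $k-2$ distinct nodes from $\bbN\setminus\{i,j\}$ carry multiplicities summing to $v-\ell-1$; this is the third line.

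The main obstacle is bookkeeping rather than conceptual: one must keep the index sets and the multinomial entries mutually consistent---the ``other'' block contains $k-2$ distinct nodes once both $i$ and the last node $j$ have been removed, but $k-1$ in the $\ell=0$ case and in the ``$i$ is last'' sub-case of $\ell=1$---and check that the multiplicity constraints $x_1+\cdots+x_{k-2}=v-\ell-1$ are compatible with total sample length $v$. The fact that each ordered sample is counted exactly once is guaranteed by first fixing the last node and the unordered collection of preceding multiplicities, and only then multiplying by the multinomial coefficient that distributes the first $v-1$ positions; this is precisely the counting already validated in the proof of Proposition~\ref{prop:dist_of_v-k-P}.
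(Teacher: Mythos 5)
Your proposal is correct and follows essentially the same route as the paper's own proof: it establishes the support via the constraint $v\ge \ell+k-1$ for $\ell\ge 1$, and then uses the observation that the last sampled node occurs exactly once to condition on its identity and split into the cases $\ell=0$, $\ell=1$ (with the two sub-cases of $i$ being last or not), and $\ell\ge 2$ (where $i$ cannot be last). The bookkeeping of index sets and multiplicity constraints matches the paper's argument.
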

\begin{proof}
Notice first that $(0, v)$ is in the support of $(A_k^{}(i), v_k^{})$. Now,  if $\ell \ge 1$ then for all $v < \ell + k - 1$ we have that $\bbP(A_k^{}(i) = l, v_k^{} = v) = 0$ since we need at least $\ell + k - 1$ samplings to sample  $\ell$ times node $i$ and the other $k - 1$ different nodes at least once. 

Let us consider separately different values of non-negative integer $\ell \in \bbN \cup \{0\}$.
	
	$\ell = 0$: This case is an immediate consequence of Proposition \ref{prop:dist_of_v-k-P}. We just need to restrict the set of all nodes that can be sampled to $\bbN \setminus \{i\}$.
	
	$\ell= 1$: Here we need to distinguish two disjoint scenarios. First one is when the node $i$ is not sampled as the last node (i.e., node $i$ is not the $k$-th different node that has been sampled). This means that the node $i$ was sampled in the first $v - 1$ samplings. Hence, we first choose node $j \in \bbN \setminus \{i\}$ that will be sampled the last. Then we choose $k - 2$ different nodes $a_1, a_2, \ldots, a_{k - 2}$ from the set $\bbN \setminus \{i, j\}$ that will appear (together with the node $i$) in the sampled sequence before the last node and we choose positive integers $x_1, x_2, \ldots, x_{k - 2} \in \bbN$ that represent how many times each of the $k - 2$ nodes from the set $\{a_1, a_2, \ldots, a_{k - 2}\}$ will appear in the sampled sequence. Notice that $\sum_{i = 1}^{k - 2} x_i$ has to be equal to $v - 2$ because the total length of the sequence, including one appearance of node $j$ (on the last place) and one appearance of node $i$ (somewhere in the first $v - 1$ samplings), has to be $v$. The last thing we need to choose is the permutation of the first $v - 1$ nodes in the sequence which can be done in $\binom{v - 1}{x_1, \ldots, x_{k - 2}, 1}$ ways (taking into consideration that node $i$ appears only once). Summarizing, the probability of sampling a sequence where the last node is $j \neq i$, node $i$ appears exactly once in the first $v - 1$ sampled nodes and the rest $k - 2$ nodes that appear together with the node $i$ before the last node $j$ are $a_1, a_2, \ldots, a_{k - 2}$ and they appear $x_1, x_2, \ldots, x_{k - 2}$ times is
	\begin{equation*}
		p_j \binom{v - 1}{x_1, \ldots, x_{k - 2}, 1} p_i \prod_{r = 1}^{k - 2} (p_{a_r})^{x_r}.
	\end{equation*}
	As in Proposition \ref{prop:dist_of_v-k-P}, we now sum this up with respect to all the possible values of the node $j$, all the possible sequences of $k - 2$ positive integers $x_1, x_2, \ldots, x_{k - 2}$ that sum up to $v - 2$ and all the subsets of $\bbN \setminus \{i, j\}$ of cardinality $k - 2$. This way we obtain the first term in the expression for $\bbP(A_k^{}(i) = 1, v_k^{} = v)$. The second scenario is the one where the node $i$ is sampled the last. Here the situation is much simpler. The last node is fixed to be $i \in \bbN$ and then we choose $k - 1$ nodes that appear before, and the number of times they appear analogously as in Proposition \ref{prop:dist_of_v-k-P}. We immediately get the second term in the expression for $\bbP(A_k^{}(i) = 1, v_k^{} = v)$.
	
	$\ell \ge 2$: Notice that in this case we don't have two different scenarios because it is impossible that the node $i$ was sampled the last. As we explained in Proposition \ref{prop:dist_of_v-k-P}, the last node can be sampled only once since we terminate sampling when we reach $k$ different nodes. Now we reason analogously as in the first scenario of the case $\ell = 1$. The only difference is that here node $i$ appears $\ell$ times (in the first $v - 1$ samplings) so the integers $x_1, x_2, \ldots, x_{k - 2}$ have to satisfy $\sum_{i = 1}^{k - 2} x_i = v - \ell - 1$. Together with $\ell$ appearances of the node $i$ and one appearance of the last node, this gives $v$ sampled nodes in total.
\end{proof}

Let $b = (b_i)_{i \in \bbN} \subset \bbR$ be a sequence of real numbers. Denote with $\norm{b}_{\infty} = \sup_{i \in \bbN} \aps{b_i}$ the supremum norm of the sequence $b$. The next result shows that if the probabilities of sampling each of the nodes converge uniformly to zero then the number of samplings needed to sample $k$ different elements converges to $k$. 

\begin{LM}\label{lm:v_k^n->k}
	Let $(P^{(n)})_{n \in \bbN}$, $P^{(n)} = (p_i^{(n)})_{i \in \bbN}$, be a sequence of probability distributions on $\bbN$ and let $(k_n)_{n \in \bbN}$ be a sequence of positive integers such that  $k_n^2 \norm[1]{P^{(n)}}_{\infty} \xrightarrow[n \rightarrow \infty]{} 0$. Then, $v_{k_n}^{(P^{(n)})} / k_n \xrightarrow[n \rightarrow \infty]{\bbP} 1$. In particular, if   for some fixed positive $k \in \bbN$ we have $k_n = k$ for all $n \in \bbN$, and $\norm[1]{P^{(n)}}_{\infty} \xrightarrow[n \rightarrow \infty]{} 0$, then we have that $v_k^{(P^{(n)})} \xrightarrow[n \rightarrow \infty]{\bbP} k$.
\end{LM}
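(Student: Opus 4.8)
The plan is to reduce both claims to a one-sided tail bound. Since every completed sample contains exactly $k_n$ distinct nodes, one always has $v_{k_n} \ge k_n$, so $v_{k_n}/k_n \ge 1$ holds deterministically and it suffices to control the upper tail. I would record the number of distinct nodes $D_v$ seen after $v$ draws together with the number of repeated draws $R_v := v - D_v$, and use the elementary identity, valid for every integer $m \ge 1$,
\[
\{v_{k_n} \ge k_n + m\} = \{D_{k_n + m - 1} < k_n\} = \{R_{k_n + m - 1} \ge m\},
\]
which converts a lower bound on the sample size into a lower bound on the number of repeats.

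The main quantitative input is a bound on $\bbE[R_v]$ by a collision count. Writing $N_i$ for the number of times node $i$ is drawn among $v$ i.i.d.\ samples from $P^{(n)}$, one has $R_v = \sum_{i \in \bbN}\max(N_i - 1, 0) \le \sum_{i \in \bbN}\binom{N_i}{2} =: C_v$, because $\binom{c}{2} \ge \max(c-1,0)$ for every integer $c \ge 0$; here $C_v$ counts the collision pairs. Taking expectations and using that the draws are i.i.d.,
\[
\bbE[R_v] \le \bbE[C_v] = \binom{v}{2}\sum_{i \in \bbN}(p_i^{(n)})^2 \le \binom{v}{2}\,\|P^{(n)}\|_{\infty} \le \frac{v^2}{2}\,\|P^{(n)}\|_{\infty},
\]
where the penultimate step uses $\sum_{i \in \bbN}(p_i^{(n)})^2 \le \|P^{(n)}\|_{\infty}\sum_{i \in \bbN}p_i^{(n)} = \|P^{(n)}\|_{\infty}$.

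Combining the two ingredients via Markov's inequality then finishes the proof. Fixing $\varepsilon > 0$ and setting $m = \lceil \varepsilon k_n \rceil \ge 1$, integrality of $v_{k_n}$ gives $\{v_{k_n} \ge (1+\varepsilon)k_n\} \subseteq \{v_{k_n} \ge k_n + m\}$, so that
\[
\bbP\big(v_{k_n} \ge (1+\varepsilon)k_n\big) \le \frac{\bbE[R_{k_n+m-1}]}{m} \le \frac{(k_n+m-1)^2\,\|P^{(n)}\|_{\infty}}{2m}.
\]
Since $m - 1 < \varepsilon k_n$ and $m \ge \max(1, \varepsilon k_n)$, I would split into two regimes: if $\varepsilon k_n \ge 1$ the right-hand side is at most $\tfrac{(1+\varepsilon)^2}{2\varepsilon}\,k_n\|P^{(n)}\|_{\infty}$, whereas if $\varepsilon k_n < 1$ (so $m = 1$ and $k_n + m - 1 = k_n$) it equals $\tfrac{1}{2}\,k_n^2\|P^{(n)}\|_{\infty}$. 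Both tend to $0$ because $k_n\|P^{(n)}\|_{\infty} \le k_n^2\|P^{(n)}\|_{\infty} \to 0$, which yields $v_{k_n}/k_n \xrightarrow[n \rightarrow \infty]{\bbP} 1$. The stated special case is just the instance $m = 1$: for fixed $k$, $\bbP(v_k > k) = \bbP(R_k \ge 1) \le \tfrac{1}{2}k^2\|P^{(n)}\|_{\infty} \to 0$.

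The one point to get right is the regime split, which also explains the exponent in the hypothesis: the quadratic factor $k_n^2$ is genuinely needed when $k_n$ stays bounded, since there a single repeat already forces $v_{k_n} \ge k_n + 1$ and the Markov bound is divided only by $m = 1$, leaving $\tfrac{1}{2}k_n^2\|P^{(n)}\|_{\infty}$; the weaker assumption $k_n\|P^{(n)}\|_{\infty} \to 0$ would suffice only in the regime $k_n \to \infty$. Otherwise the argument is routine once the identity $\{v_{k_n} \ge k_n+m\} = \{R_{k_n+m-1}\ge m\}$ and the collision inequality $R_v \le C_v$ are verified, which I would check carefully since the whole estimate rests on them.
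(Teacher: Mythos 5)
Your argument is correct: the identity $\{v_{k_n} \ge k_n + m\} = \{R_{k_n+m-1} \ge m\}$, the pointwise bound $\max(c-1,0) \le \binom{c}{2}$, the computation $\bbE[C_v] = \binom{v}{2}\sum_i (p_i^{(n)})^2 \le \tfrac{v^2}{2}\|P^{(n)}\|_{\infty}$, and the two-regime Markov estimate all check out. The quantitative core is the same birthday-problem input as in the paper's proof, namely $\sum_i (p_i^{(n)})^2 \le \|P^{(n)}\|_{\infty}$ summed over pairs of draws, but the mechanism is genuinely different. The paper decomposes the event $\{v_{k_n} > k_n\}$ according to the position of the \emph{first} repeated draw, bounds the $j$-th term by $(j-1)\|P^{(n)}\|_{\infty}$, and sums to get $\bbP(v_{k_n} > k_n) \le k_n^2 \|P^{(n)}\|_{\infty}$; this proves the stronger statement that the sample contains no repeats at all with probability tending to one, i.e.\ $\bbP(v_{k_n} = k_n) \to 1$, from which convergence in probability is immediate. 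You instead run a first-moment argument on the total repeat count with the $\varepsilon$-dependent threshold $m = \lceil \varepsilon k_n \rceil$, which yields only the $\varepsilon$-tail bound but buys something in return: in the regime $\varepsilon k_n \ge 1$ your bound is $O(k_n \|P^{(n)}\|_{\infty})$, so the weaker hypothesis $k_n \|P^{(n)}\|_{\infty} \to 0$ already gives $v_{k_n}/k_n \xrightarrow{\bbP} 1$. One small correction to your closing remark: when $k_n$ stays bounded, the conditions $k_n\|P^{(n)}\|_{\infty} \to 0$ and $k_n^2\|P^{(n)}\|_{\infty} \to 0$ are equivalent, so the linear condition in fact suffices in \emph{all} regimes for the convergence-in-probability statement; the quadratic hypothesis is what is genuinely needed for the paper's stronger no-repeat conclusion $\bbP(v_{k_n} = k_n) \to 1$, not for the lemma as stated.
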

\begin{proof}
	For simplicity, we denote $v_{k_{n}}^{(n)} := v_{k_{n}}^{(P^{(n)})}$. Since $v_{k_n}^{(n)}$ is larger than or equal to $k_n$, it is sufficient to show that $\bbP(v_{k_n}^{(n)} / k_n > 1) \xrightarrow[n \rightarrow \infty]{} 0$. Denote by $X_i^{(n)}$ the random variable representing the node sampled in the $i$-th sampling. Since the event $\{v_{k_n}^{(n)} / k_n  > 1\}$ happens if and only if some of the nodes sampled in the first $k_n$ samplings appear more that once, we have
	\begin{align*}
		\bbP(v_{k_n}^{(n)} / k_n > 1) &  \\
		&\hspace{-1cm} = \bbP(X_1^{(n)} = X_2^{(n)}) + \bbP(X_1^{(n)} \neq X_2^{(n)}, X_3^{(n)} \in \{X_1^{(n)}, X_2^{(n)}\}) + \cdots \\
		&  \hspace{-1cm}\cdots + \bbP(X_1^{(n)} \neq X_2^{(n)}, X_3^{(n)} \notin \{X_1^{(n)}, X_2^{(n)}\}, \ldots, X_{k_n}^{(n)} \in \{X_1^{(n)}, X_2^{(n)}, \ldots, X_{k_n - 1}^{(n)}\}) \\
		& \hspace{-1cm}= \sum_{i_1 \in \bbN} (p_{i_1}^{(n)})^2 + \sum_{i_1 \in \bbN} \sum_{\substack{i_2 \in \bbN \\ i_2 \neq i_1}} p_{i_1}^{(n)} p_{i_2}^{(n)} (p_{i_1}^{(n)} + p_{i_2}^{(n)})\\
		& \hspace{-1cm}\qquad + \ldots + \sum_{i_1 \in \bbN} \sum_{\substack{i_2 \in \bbN \\ i_2 \neq i_1}} \cdots \sum_{\substack{i_{k_n - 1} \in \bbN \\ i_{k_n - 1} \neq i_1 \\ \cdots \\ i_{k_n - 1} \neq i_{k_n - 2}}} p_{i_1}^{(n)} p_{i_2}^{(n)} \cdots p_{i_{k_n - 1}}^{(n)} \sum_{j = 1}^{k_n - 1} p_{i_j}^{(n)} \\
		& \hspace{-1cm}\le \norm[1]{P^{(n)}}_{\infty} \sum_{i_1 \in \bbN} p_{i_1}^{(n)} + 2\norm[1]{P^{(n)}}_{\infty} \sum_{i_1 \in \bbN} \sum_{i_2 \in \bbN} p_{i_1}^{(n)} p_{i_2}^{(n)}\\
		& \hspace{-1cm}\qquad + \ldots + (k_n - 1)\norm[1]{P^{(n)}}_{\infty} \sum_{i_1 \in \bbN} \sum_{i_2 \in \bbN} \cdots \sum_{i_{k_n - 1} \in \bbN} p_{i_1}^{(n)} p_{i_2}^{(n)} \cdots p_{i_{k_n - 1}}^{(n)} \\
		& \hspace{-1cm}= \norm[1]{P^{(n)}}_{\infty} (1 + 2 + \cdots + k_n - 1) \le k_n^2 \norm[1]{P^{(n)}}_{\infty}.
	\end{align*}
By the assumption, the last term converges to zero when $n$ goes to infinity, which is exactly what we wanted to prove.
\end{proof}
\begin{REM}
	Let us investigate what happens when the sequence $(P^{(n)})_{n \in \bbN}$ is defined by a Zipf law (see \eqref{eq:seq_of_Zipf_law})  with parameter $s > 0$. Since each of the sequences $(p_i^{(n)})_{i \in \bbN}$ is decreasing in $i$ we have
	\begin{equation*}
		\norm[1]{P^{(n)}}_{\infty} = p_1^{(n)} = \frac{1}{\sum_{i = 1}^n \frac{1}{i^s}}.
	\end{equation*}
	Notice that for all $s \le 1$ we have $\norm[1]{P^{(n)}}_{\infty} \xrightarrow[n \rightarrow \infty]{} 0$ because the series $\sum_{i = 1}^{\infty} \frac{1}{i^s}$ diverges for those values of the parameter $s$. Hence, for a fixed integer $k \in \bbN$, we have that $v_k^{(P^{(n)})} \xrightarrow[n \rightarrow \infty]{\bbP} k$ whenever $s \le 1$. Another important example is when sequence $(k_n)_{n \in \bbN}$ is given by
	\begin{equation*}
		k_n = \floor{\log(n)},
	\end{equation*}
	where for $x \in \bbR$, $\floor{x}$ is the largest integer less than or equal to $x$. Using
	\begin{equation*}
		\sum_{i = 1}^{n} \frac{1}{i^s} \sim
		\begin{cases}
			n^{1 - s}, \quad & s < 1, \\
			\hfil \log(n), \quad & s = 1,
		\end{cases}
	\end{equation*}
	we get, for $s < 1$, $k_n^2 \norm[1]{P^{(n)}}_{\infty} \xrightarrow[n \rightarrow \infty]{} 0$ so we can apply Lemma \ref{lm:v_k^n->k} for this particular choice of sequences $(k_n)_{n \in \bbN}$ and $(P^{(n)})_{n \in \bbN}$. 
\end{REM}

In Lemma \ref{lm:v_k^n->k} we dealt with the behavior of the sequence of random variables $(v_{k_{n}}^{P^{(n)}})_{n \in \bbN}$ if the sequence $(P^{(n)})_{n \in \bbN}$ 
satisfies $\,\,\norm[1]{P^{(n)}}_{\infty} \xrightarrow[n \rightarrow \infty]{} 0$. Next, we study the case when the sequence $(P^{(n)})_{n \in \bbN}$ converges in the supremum norm to another probability distribution $P^{(\infty)}$ on $\bbN$. As before, for $b = (b_i)_{i \in \bbN} \subset \bbR$, we use the notation $\norm{b}_{\infty} = \sup_{i \in \bbN} \aps{b_i}$ and we write $\norm{b}_1 = \sum_{i = 1}^{\infty} \aps{b_i}$.

\begin{PROP}\label{prop:cvg_in_dist_of_(A,v)}
	Let $(P^{(n)})_{n \in \bbN}$, $P^{(n)} = (p_i^{(n)})_{i \in \bbN}$, be a sequence of probability distributions on $\bbN$ and let $P^{(\infty)} = (p_i^{(\infty)})_{i \in \bbN}$ be a probability distribution on $\bbN$. If $$\norm[1]{P^{(n)} - P^{(\infty)}}_{\infty} = \sup_{i \in \bbN} \aps{p_i^{(n)} - p_i^{(\infty)}} \xrightarrow[n \rightarrow \infty]{} 0$$ then, for all fixed $k \in \bbN$,
	\begin{equation*}
		(A_k^{(P^{(n)})}(i), v_k^{(P^{(n)})}) \xrightarrow[n \rightarrow \infty]{(d)} (A_k^{(P^{(\infty)})}(i), v_k^{(P^{(\infty)})}),
	\end{equation*}
	where $\xrightarrow[]{(d)}$ denotes convergence in distribution.
\end{PROP}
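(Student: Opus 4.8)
The plan is to use the fact that $(A_k(i), v_k)$ takes values in the countable set $\{0,1,2,\dots\}\times\{k,k+1,\dots\}$, so that convergence in distribution is equivalent to pointwise convergence of probability mass functions. Thus it suffices to show that for every $(\ell,v)$ in the support,
\[
\bbP\OBL{A_k^{(P^{(n)})}(i)=\ell,\ v_k^{(P^{(n)})}=v}\xrightarrow[n\to\infty]{}\bbP\OBL{A_k^{(P^{(\infty)})}(i)=\ell,\ v_k^{(P^{(\infty)})}=v},
\]
the limiting masses being those of a genuine random vector and hence summing to one. Rather than pushing the limit through the explicit formula \eqref{eq:dist_of_Akp-Vkp} term by term, I would route the argument through total variation.

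First I would upgrade the hypothesis from sup-norm to $\ell^1$ convergence. Since $\norm[1]{P^{(n)}-P^{(\infty)}}_{\infty}\to 0$ forces $p_j^{(n)}\to p_j^{(\infty)}$ for each fixed $j$, and both $P^{(n)}$ and $P^{(\infty)}$ are probability distributions on $\bbN$, Scheffé's lemma gives $\sum_{j}\aps{p_j^{(n)}-p_j^{(\infty)}}\to 0$, i.e.\ $P^{(n)}\to P^{(\infty)}$ in total variation. The key step is then tensorization. Let $(X_t)_{t\ge 1}$ be the i.i.d.\ sampling sequence, so that $(X_1,\dots,X_v)$ has law $P^{\otimes v}$. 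For each fixed $v$, the event $\{A_k(i)=\ell,\ v_k=v\}$ is a cylinder event depending only on the first $v$ coordinates: $\{v_k=v\}$ occurs precisely when $X_1,\dots,X_{v-1}$ realize exactly $k-1$ distinct nodes and $X_v$ is a new one, while $\{A_k(i)=\ell\}$ counts the appearances of $i$ among $X_1,\dots,X_v$. Hence there is a fixed set $B_{\ell,v}\subseteq\bbN^v$ with $\bbP(A_k^{(P)}(i)=\ell,\ v_k^{(P)}=v)=P^{\otimes v}(B_{\ell,v})$, and applying the standard product bound $\norm[1]{\mu^{\otimes v}-\nu^{\otimes v}}_{\mathrm{TV}}\le v\,\norm[1]{\mu-\nu}_{\mathrm{TV}}$ yields
\[
\APS{P^{(n)\otimes v}(B_{\ell,v})-P^{(\infty)\otimes v}(B_{\ell,v})}\le\norm[1]{P^{(n)\otimes v}-P^{(\infty)\otimes v}}_{\mathrm{TV}}\le v\,\norm[1]{P^{(n)}-P^{(\infty)}}_{\mathrm{TV}}\xrightarrow[n\to\infty]{}0,
\]
which is exactly the required convergence of masses, and the proposition follows.

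The main obstacle is that the support is infinite and each mass $P^{\otimes v}(B_{\ell,v})$ is itself an infinite sum over all choices of the distinct nodes appearing in the sample, so one cannot naively exchange the limit with the summation in \eqref{eq:dist_of_Akp-Vkp}. The total variation formulation is precisely what circumvents this: the tensorization bound controls $\aps{P^{(n)\otimes v}(B)-P^{(\infty)\otimes v}(B)}$ uniformly over \emph{all} events $B$ simultaneously, so no term-by-term dominated-convergence estimate is needed. As an alternative I would prove continuity of the functional \eqref{eq:dist_of_Akp-Vkp} directly, dominating each inner sum $\sum_{A}\prod_r(p_{a_r})^{x_r}$ by $1$ (using $x_r\ge 1$ and $\sum_a p_a=1$) and applying dominated convergence in $j$ over the summable envelope $(p_j^{(n)})_j$; but the tensorization route is cleaner and spares the bookkeeping of the three cases $\ell=0$, $\ell=1$, and $\ell\ge 2$.
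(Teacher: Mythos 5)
Your proof is correct, but it takes a genuinely different route from the paper's. The paper proves the pointwise convergence of the joint masses by differencing the explicit formula \eqref{eq:dist_of_Akp-Vkp}: it splits into the cases $\ell=0$, $\ell=1$, $\ell\ge 2$, telescopes the products $(p_{a_1})^{x_1}\cdots(p_{a_{k-1}})^{x_{k-1}}$ via Lemma \ref{lm:telescope_equality}, and bounds each resulting term by $\sum_j\aps{p_j^{(n)}-p_j^{(\infty)}}$, which tends to $0$ by Proposition \ref{prop:equiv_of_cvg_in_l-1_and_l-infty} (the paper's elementary stand-in for your appeal to Scheff\'e). Your argument replaces all of that bookkeeping with two observations: $\{A_k(i)=\ell,\ v_k=v\}$ is a cylinder event $B_{\ell,v}\subseteq\bbN^v$ in the first $v$ draws, independent of the sampling distribution, and $\sup_B\aps{\mu^{\otimes v}(B)-\nu^{\otimes v}(B)}\le v\,\sup_B\aps{\mu(B)-\nu(B)}$. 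This is cleaner and strictly more general (it handles any statistic of the stopped sample at once, uniformly over events), at the cost of importing the tensorization inequality --- whose standard proof via the hybrid decomposition $\mu^{\otimes v}-\nu^{\otimes v}=\sum_{j}\mu^{\otimes(j-1)}\otimes(\mu-\nu)\otimes\nu^{\otimes(v-j)}$ is exactly the measure-level form of the paper's Lemma \ref{lm:telescope_equality}, so the two proofs are at bottom the same estimate packaged at different levels of abstraction. Both arguments also agree on the initial reduction: on a countable state space, convergence in distribution of $(A_k(i),v_k)$ is equivalent to pointwise convergence of the joint probability mass function, the limit being a genuine probability distribution.
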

\begin{proof}
	For simplicity, we denote $v^{(n)}_{k} := v_k^{(P^{(n)})}$, $v^{(\infty)}_{k} := v_k^{(P^{(\infty)})}$, $A_{k}^{(n)}(i) := A_k^{(P^{(n)})}(i)$ and $A_k^{(\infty)}(i) := A_k^{(P^{(\infty)})}(i)$. Since we consider discrete random variables, the statement
	\begin{equation*}
		(A_k^{(n)}(i), v^{(n)}_{k}) \xrightarrow[n \rightarrow \infty]{(d)} (A_k^{(\infty)}(i), v^{(\infty)}_{k})
	\end{equation*}
	is equivalent to
	\begin{equation*}
		\bbP(A_k^{(n)}(i) = \ell, v^{(n)}_{k} = v) \xrightarrow[n \rightarrow \infty]{} \bbP(A_k^{(\infty)}(i) = \ell, v^{(\infty)}_{k} = v)
	\end{equation*}
	for all $\ell \in \bbN \cup \{0\}$ and all $v \in \bbN$. As in the proof of Proposition \ref{prop:dist_of_Akp-Vkp}, we consider separately different values of the non-negative integer $\ell \in \bbN \cup \{0\}$.
	
	$\ell= 0$: Using Proposition \ref{prop:dist_of_Akp-Vkp}, we have
	\begin{align*}
		\vert \bbP(A_k^{(n)}(i)
		& = 0, v^{(n)}_{k} = v) - \bbP(A_k^{(\infty)}(i) = 0, v^{(\infty)}_{k} = v) \vert \\
		& = \Bigg\vert \sum_{\substack{j = 1 \\ j \neq i}}^{\infty} p_j^{(n)} \sum_{\substack{x_1 + \cdots + x_{k - 1} = v - 1 \\ x_1, \ldots, x_{k - 1} \ge 1}} \binom{v - 1}{x_1, \ldots, x_{k - 1}} \sum_{\substack{A \subset \bbN \setminus \{i, j\} \\ \aps{A} = k - 1}} (p_{a_1}^{(n)})^{x_1} \cdots (p_{a_{k - 1}}^{(n)})^{x_{k - 1}} \\
		& \qquad - \sum_{\substack{j = 1 \\ j \neq i}}^{\infty} p_j^{(\infty)} \sum_{\substack{x_1 + \cdots + x_{k - 1} = v - 1 \\ x_1, \ldots, x_{k - 1} \ge 1}} \binom{v - 1}{x_1, \ldots, x_{k - 1}} \sum_{\substack{A \subset \bbN \setminus \{i, j\} \\ \aps{A} = k - 1}} (p_{a_1}^{(\infty)})^{x_1} \cdots (p_{a_{k - 1}}^{(\infty)})^{x_{k - 1}} \Bigg\vert \\
		& \le \sum_{\substack{x_1 + \cdots + x_{k - 1} = v - 1 \\ x_1, \ldots, x_{k - 1} \ge 1}} \binom{v - 1}{x_1, \ldots, x_{k - 1}} \Bigg\vert \Big( \sum_{\substack{j = 1 \\ j \neq i}}^{\infty} p_j^{(n)} \sum_{\substack{A \subset \bbN \setminus \{i, j\} \\ \aps{A} = k - 1}} (p_{a_1}^{(n)})^{x_1} \cdots (p_{a_{k - 1}}^{(n)})^{x_{k - 1}} \Big) \\
		& \qquad - \Big( \sum_{\substack{j = 1 \\ j \neq i}}^{\infty} p_j^{(\infty)} \sum_{\substack{A \subset \bbN \setminus \{i, j\} \\ \aps{A} = k - 1}} (p_{a_1}^{(\infty)})^{x_1} \cdots (p_{a_{k - 1}}^{(\infty)})^{x_{k - 1}} \Big) \Bigg\vert \\
		& = \sum_{\substack{x_1 + \cdots + x_{k - 1} = v - 1 \\ x_1, \ldots, x_{k - 1} \ge 1}} \binom{v - 1}{x_1, \ldots, x_{k - 1}} \aps{I^{(n)}(x_1, \ldots x_{k - 1}) - I^{(\infty)}(x_1, \ldots x_{k - 1})},
	\end{align*}
with 	
\begin{align*}
I^{(n)}(x_1, \ldots x_{k - 1}) & :=  \sum_{\substack{j = 1 \\ j \neq i}}^{\infty} p_j^{(n)} \sum_{\substack{A \subset \bbN \setminus \{i, j\} \\ \aps{A} = k - 1}} (p_{a_1}^{(n)})^{x_1} \cdots (p_{a_{k - 1}}^{(n)})^{x_{k - 1}}, \\
I^{(\infty)}(x_1, \ldots x_{k - 1}) &:= \sum_{\substack{j = 1 \\ j \neq i}}^{\infty} p_j^{(\infty)} \sum_{\substack{A \subset \bbN \setminus \{i, j\} \\ \aps{A} = k - 1}} (p_{a_1}^{(\infty)})^{x_1} \cdots (p_{a_{k - 1}}^{(\infty)})^{x_{k - 1}}.
\end{align*}
	
It remains to prove  that $\aps{I^{(n)}(x_1, \ldots x_{k - 1}) - I^{(\infty)}(x_1, \ldots x_{k - 1})} \xrightarrow[n \rightarrow \infty]{} 0$, uniformly for all possible values of positive integers $x_1, x_2, \ldots x_{k - 1}$. This is sufficient since  the number of partitions of integer $v - 1$ into $k - 1$ parts is finite and independent of $n$. Notice that for every $i, j \in \bbN$
\begin{equation}\label{eq:cvg_of_sum_over_A}
		\sum_{\substack{A \subset \bbN \setminus \{i, j\} \\ \aps{A} = k - 1}} (p_{a_1}^{(\infty)})^{x_1} \cdots (p_{a_{k - 1}}^{(\infty)})^{x_{k - 1}} \le \sum_{\substack{A \subset \bbN \\ \aps{A} = k - 1}} p_{a_1}^{(\infty)} \cdots p_{a_{k - 1}}^{(\infty)} \le \sum_{a_1 = 1}^{\infty} p_{a_1}^{(\infty)} \cdots \sum_{a_{k - 1} = 1}^{\infty} p_{a_{k - 1}}^{(\infty)} = 1.
	\end{equation}
	Clearly, the same is true when, instead of distribution $P^{(\infty)}$, we consider the  distribution $P^{(n)}$. Due to convergence of these series, we can rewrite 
	\begin{align*}
		I^{(n)}(x_1, \ldots, x_{k - 1})
			& - I^{(\infty)}(x_1, \ldots, x_{k - 1}) = \Big( \sum_{\substack{j = 1 \\ j \neq i}}^{\infty} (p_j^{(n)} - p_j^{(\infty)}) \sum_{\substack{A \subset \bbN \setminus \{i, j\} \\ \aps{A} = k - 1}} (p_{a_1}^{(n)})^{x_1} \cdots (p_{a_{k - 1}}^{(n)})^{x_{k - 1}} \Big) \\
		&  + \Big( \sum_{\substack{j = 1 \\ j \neq i}}^{\infty} p_j^{(\infty)} \sum_{\substack{A \subset \bbN \setminus \{i, j\} \\ \aps{A} = k - 1}} \OBL{(p_{a_1}^{(n)})^{x_1} \cdots (p_{a_{k - 1}}^{(n)})^{x_{k - 1}} - (p_{a_1}^{(\infty)})^{x_1} \cdots (p_{a_{k - 1}}^{(\infty)})^{x_{k - 1}}} \Big) \\
		& =: S_1^{(n)}(x_1, \ldots, x_{k - 1}) + S_2^{(n)}(x_1, \ldots, x_{k - 1}).
	\end{align*}
	For simplicity, we write $S_1^{(n)} = S_1^{(n)}(x_1, \ldots, x_{k - 1})$ and $S_2^{(n)} = S_2^{(n)}(x_1, \ldots, x_{k - 1})$. It remains to prove that $S_1^{(n)}$ and $S_2^{(n)}$ converge to $0$ when $n$ goes to infinity. Using Inequality \eqref{eq:cvg_of_sum_over_A} and Proposition \ref{prop:equiv_of_cvg_in_l-1_and_l-infty} we have that
	\begin{equation*}
		\aps{S_1^{(n)}} \le \sum_{j = 1}^{\infty} \aps{p_j^{(n)} - p_j^{(\infty)}} \cdot 1 \xrightarrow[n \rightarrow \infty]{} 0.
	\end{equation*}
To treat  the term $S_2^{(n)}$ we use Lemma \ref{lm:telescope_equality}, in the second line, and Proposition \ref{prop:equiv_of_cvg_in_l-1_and_l-infty}, in the last line, to obtain
	\begin{align*}
		\aps{S_2^{(n)}}
		& \le \sum_{\substack{j = 1 \\ j \neq i}}^{\infty} p_j^{(\infty)} \sum_{\substack{A \subset \bbN \setminus \{i, j\} \\ \aps{A} = k - 1}} \APS{(p_{a_1}^{(n)})^{x_1} \cdots (p_{a_{k - 1}}^{(n)})^{x_{k - 1}} - (p_{a_1}^{(\infty)})^{x_1} \cdots (p_{a_{k - 1}}^{(\infty)})^{x_{k - 1}}} \\
		& \le \sum_{j = 1}^{\infty} p_j^{(\infty)} \hspace{-0.3cm}\sum_{\substack{A \subset \bbN \\ \aps{A} = k - 1}} \left|\sum_{r = 1}^{k - 1} (p_{a_1}^{(n)})^{x_1} \cdots (p_{a_{r - 1}}^{(n)})^{x_{r - 1}} \OBL{(p_{a_r}^{(n)})^{x_r} -  (p_{a_r}^{(\infty)})^{x_r}}\cdot \right. \\
		& \hspace{8cm} \cdot \left.(p_{a_{r + 1}}^{(\infty)})^{x_{r + 1}} \cdots (p_{a_{k - 1}}^{(\infty)})^{x_{k - 1}} \right|\\
		& \le \sum_{r = 1}^{k - 1} \sum_{a_1 = 1}^{\infty} \cdots \sum_{a_{k - 1} = 1}^{\infty} (p_{a_1}^{(n)})^{x_1} \cdots (p_{a_{r - 1}}^{(n)})^{x_{r - 1}} \APS{(p_{a_r}^{(n)})^{x_r} - (p_{a_r}^{(\infty)})^{x_r}}\cdot \\
		& \hspace{8cm} \cdot (p_{a_{r + 1}}^{(\infty)})^{x_{r + 1}} \cdots (p_{a_{k - 1}}^{(\infty)})^{x_{k - 1}} \\
		& \le \sum_{r = 1}^{k - 1} \sum_{a_r = 1}^{\infty} \APS{(p_{a_r}^{(n)})^{x_r} - (p_{a_r}^{(\infty)})^{x_r}} \\
		& = \sum_{r = 1}^{k - 1} \sum_{a_r = 1}^{\infty} \APS{p_{a_r}^{(n)} -  p_{a_r}^{(\infty)}} \APS{(p_{a_r}^{(n)})^{x_r - 1} + (p_{a_r}^{(n)})^{x_r - 2} (p_{a_r}^{(\infty)}) + \cdots + (p_{a_r}^{(\infty)})^{x_r - 1}} \\
		& \le (k - 1)x_r \sum_{j = 1}^{\infty} \APS{p_j^{(n)} - p_j^{(\infty)}} \le vk \sum_{j = 1}^{\infty} \APS{p_j^{(n)} - p_j^{(\infty)}} \xrightarrow[n \rightarrow \infty]{} 0.
	\end{align*} 
	$\ell \ge 2$: Again using Proposition \ref{prop:dist_of_Akp-Vkp}, we have
	\begin{align*}
		\vert& \bbP(A_{k}^{(n)}(i)
		 = \ell,  v^{(n)}_{k} = v) - \bbP(A_k^{(\infty)}(i) = \ell, v^{(\infty)}_{k} = v) \vert \\
		& = \Bigg\vert \sum_{\substack{j = 1 \\ j \neq i}}^{\infty} p_j^{(n)} \sum_{\substack{x_1 + \cdots + x_{k - 2} = v - \ell - 1 \\ x_1, \ldots, x_{k - 2} \ge 1}} \binom{v - 1}{x_1, \ldots, x_{k - 2}, \ell} (p_i^{(n)})^\ell \sum_{\substack{A \subset \bbN \setminus \{i, j\} \\ \aps{A} = k - 2}} (p_{a_1}^{(n)})^{x_1} \cdots (p_{a_{k - 2}}^{(n)})^{x_{k - 2}} \\
		& \qquad - \sum_{\substack{j = 1 \\ j \neq i}}^{\infty} p_j^{(\infty)} \sum_{\substack{x_1 + \cdots + x_{k - 2} = v - \ell - 1 \\ x_1, \ldots, x_{k - 2} \ge 1}} \binom{v - 1}{x_1, \ldots, x_{k - 2}, \ell} (p_i^{(\infty)})^\ell \sum_{\substack{A \subset \bbN \setminus \{i, j\} \\ \aps{A} = k - 2}} (p_{a_1}^{(\infty)})^{x_1} \cdots (p_{a_{k - 2}}^{(\infty)})^{x_{k - 2}} \Bigg\vert \\
		& \le  \hspace{-0.5cm}\sum_{\substack{x_1 + \cdots + x_{k - 2} = v - \ell - 1 \\ x_1, \ldots, x_{k - 2} \ge 1}} \hspace{-0.2cm}\binom{v - 1}{x_1, \ldots, x_{k - 2}, \ell} \Bigg\vert  \hspace{-0.1cm}\sum_{\substack{j = 1 \\ j \neq i}}^{\infty} (p_j^{(n)} (p_i^{(n)})^\ell - p_j^{(\infty)} (p_i^{(\infty)})^\ell) \hspace{-0.32cm}\sum_{\substack{A \subset \bbN \setminus \{i, j\} \\ \aps{A} = k - 2}} \hspace{-0.28cm}(p_{a_1}^{(n)})^{x_1} \hspace{-0.1cm} \cdots (p_{a_{k - 2}}^{(n)})^{x_{k - 2}} \\
		& \quad + \sum_{\substack{j = 1 \\ j \neq i}}^{\infty} p_j^{(\infty)} (p_i^{(\infty)})^\ell \sum_{\substack{A \subset \bbN \setminus \{i, j\} \\ \aps{A} = k - 2}} \OBL{(p_{a_1}^{(n)})^{x_1} \cdots (p_{a_{k - 2}}^{(n)})^{x_{k - 2}} - (p_{a_1}^{(\infty)})^{x_1}\cdots (p_{a_{k - 2}}^{(\infty)})^{x_{k - 2}}} \Bigg\vert.
	\end{align*}
To show  that the above expression converges to zero as $n$ tends to infinity it remains to verify that
	\begin{equation*}
		\sum_{\substack{j = 1 \\ j \neq i}}^{\infty} \aps{p_j^{(n)} (p_i^{(n)})^\ell - p_j^{(\infty)} (p_i^{(\infty)})^\ell} \xrightarrow[n \rightarrow \infty]{} 0.
	\end{equation*}
To obtain this, we can use the same arguments as in the previous case. Again, introducing a middle term leads to
	\begin{align*}
		\sum_{\substack{j = 1 \\ j \neq i}}^{\infty} \vert p_j^{(n)} (p_i^{(n)})^\ell
		& - p_j^{(\infty)} (p_i^{(\infty)})^\ell \vert = \sum_{\substack{j = 1 \\ j \neq i}}^{\infty} \aps{p_j^{(n)} (p_i^{(n)})^\ell - p_j^{(n)} (p_i^{(\infty)})^\ell + p_j^{(n)} (p_i^{(\infty)})^\ell - p_j^{(\infty)} (p_i^{(\infty)})^\ell} \\
		& \le \aps{(p_i^{(n)})^\ell - (p_i^{(\infty)})^\ell} \sum_{j = 1}^{\infty} p_j^{(n)} + (p_i^{(\infty)})^\ell \sum_{j = 1}^{\infty} \aps{p_j^{(n)} - p_j^{(\infty)}} \\
		& \le \aps{p_i^{(n)} - p_i^{(\infty)}} \aps{(p_i^{(n)})^{\ell - 1} + (p_i^{(n)})^{\ell - 2} p_i^{(\infty)} + \cdots + (p_i^{(\infty)})^{\ell - 1}} + \sum_{j = 1}^{\infty} \aps{p_j^{(n)} - p_j^{(\infty)}} \\
		& \le \ell \cdot \norm[1]{P^{(n)} - P^{(\infty)}}_{\infty} + \norm[1]{P^{(n)} - P^{(\infty)}}_1.
	\end{align*}
	Applying again Proposition \ref{prop:equiv_of_cvg_in_l-1_and_l-infty} we get the desired result.
	
	$\ell= 1$: The above argument stays the same for $\ell= 1$. Hence, the difference of the first terms in the expressions for $\bbP(A_k^{(n)}(i) = 1, v^{(n)}_{k} = v)$ and $\bbP(A_k^{(\infty)}(i) = 1, v^{(\infty)}_{k} = v)$ (see \eqref{eq:dist_of_Akp-Vkp}) goes to zero. The difference of the second terms can be handled similarly as in the case $\ell = 0$; the situation is even simpler due to the absence of  the initial sum. This concludes the proof of this proposition.
\end{proof}

\begin{COR}\label{cor:three_cvg_results}
	Let $(P^{(n)})_{n \in \bbN}$, $P^{(n)} = (p_i^{(n)})_{i \in \bbN}$, be a sequence of probability distributions on $\bbN$ and let $P^{(\infty)} = (p_i^{(\infty)})_{i \in \bbN}$  be a probability distribution on $\bbN$. We assume that $g\equiv 1$. If $\,\norm[1]{P^{(n)} - P^{(\infty)}}_{\infty} = \sup_{i \in \bbN} \aps{p_i^{(n)} - p_i^{(\infty)}} \xrightarrow[n \rightarrow \infty]{} 0$, then for all $i \in \bbN$ and all fixed $k \in \bbN$	\begin{equation}\label{eq:cvg_in_d_of_A}
		A_k^{(P^{(n)})}(i) \xrightarrow[n \rightarrow \infty]{(d)} A_k^{(P^{(\infty)})}(i),
	\end{equation}
	\begin{equation}\label{eq:cvg_in_d_of_v}
		v_k^{(P^{(n)})} \xrightarrow[n \rightarrow \infty]{(d)} v_k^{(P^{(\infty)})},
	\end{equation}
	\begin{equation}\label{eq:cvg_in_d_of_Vp}
		V_k^{(P^{(n)})}(p_i^{(n)}) \xrightarrow[n \rightarrow \infty]{} V_k^{(P^{(\infty)})}(p_i^{(\infty)}),
	\end{equation}
	where $V_k^{(P^{(n)})}(p_i^{(n)}) = \bbE[A_k^{(P^{(n)})}(i) / v_k^{(P^{(n)})}]$, $n \in \bbN \cup \{\infty\}$, is the voting power of the node $i$ in the case $g\equiv 1$.
\end{COR}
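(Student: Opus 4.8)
The plan is to deduce all three assertions from the single joint convergence established in Proposition \ref{prop:cvg_in_dist_of_(A,v)}. Under the hypothesis $\sup_{i}\aps{p_i^{(n)} - p_i^{(\infty)}} \to 0$, that proposition gives $(A_k^{(P^{(n)})}(i), v_k^{(P^{(n)})}) \xrightarrow{(d)} (A_k^{(P^{(\infty)})}(i), v_k^{(P^{(\infty)})})$. The two marginal statements \eqref{eq:cvg_in_d_of_A} and \eqref{eq:cvg_in_d_of_v} then follow immediately: projecting a convergent joint law onto either coordinate preserves convergence in distribution (equivalently, summing the joint pmf over the unwanted coordinate, where for each fixed pair the pmf converges by the proposition and the relevant index set is at worst countable with the required control already built in).

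The substance lies in \eqref{eq:cvg_in_d_of_Vp}. Since $g \equiv 1$, the voting power is $V_k^{(P^{(n)})}(p_i^{(n)}) = \bbE[A_k^{(P^{(n)})}(i)/v_k^{(P^{(n)})}]$. I would first record that the integrand is a \emph{bounded} random variable: on the support one always has $0 \le A_k(i) \le v_k$ and $v_k \ge k \ge 1$, so $A_k(i)/v_k \in [0,1]$ with no risk of dividing by zero. The map $(a,v)\mapsto a/v$ is continuous at every point of the (discrete) support of the limiting vector, where $v \ge k \ge 1 > 0$. Hence the continuous mapping theorem, applied to the joint convergence, yields $A_k^{(P^{(n)})}(i)/v_k^{(P^{(n)})} \xrightarrow{(d)} A_k^{(P^{(\infty)})}(i)/v_k^{(P^{(\infty)})}$.

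It remains to pass from convergence in distribution of these ratios to convergence of their expectations, that is, to interchange the limit in $n$ with the expectation. This is the only genuine obstacle, because $V_k^{(P^{(n)})}(p_i^{(n)}) = \sum_{v \ge k}\sum_{\ell}(\ell/v)\,\bbP(A_k^{(P^{(n)})}(i) = \ell, v_k^{(P^{(n)})} = v)$ is an infinite sum, and the termwise convergence supplied by Proposition \ref{prop:cvg_in_dist_of_(A,v)} does not by itself license exchanging limit and sum. I would settle this using that the ratios all lie in $[0,1]$: a uniformly bounded family is trivially uniformly integrable, and convergence in distribution together with uniform integrability gives convergence of the means. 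Equivalently, and more elementarily, convergence in distribution of $v_k^{(P^{(n)})}$ makes this sequence tight, so for any $\varepsilon > 0$ one can fix $V_0$ with $\bbP(v_k^{(P^{(n)})} > V_0) < \varepsilon$ uniformly in $n$; splitting the double sum at $V_0$, the tail is at most $\varepsilon$ (since $\ell/v \le 1$) while the finite head converges term by term. Either route yields \eqref{eq:cvg_in_d_of_Vp} and completes the proof.
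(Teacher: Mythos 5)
Your proposal is correct and follows essentially the same route as the paper: both deduce the two marginal statements by projecting the joint convergence of Proposition \ref{prop:cvg_in_dist_of_(A,v)}, and both obtain \eqref{eq:cvg_in_d_of_Vp} by exploiting that $A_k(i)/v_k \in [0,1]$ together with $v_k \ge k \ge 1$. The paper packages the final step slightly differently---it tests the joint convergence directly against the bounded continuous function $\phi(x,y)=\min\{x/y,1\}$ on $[0,\infty)\times[1,\infty)$ and notes that $\phi$ coincides with the ratio on the support---whereas you use the continuous mapping theorem followed by uniform integrability (or truncation), but the two arguments are interchangeable.
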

\begin{proof}
Convergence in  \eqref{eq:cvg_in_d_of_A} and \eqref{eq:cvg_in_d_of_v} follows directly from Proposition \ref{prop:cvg_in_dist_of_(A,v)} using the continuous mapping theorem (see \cite[Theorem 3.2.4]{Durrett}) applied to projections $\FJADEF{\Pi_1, \Pi_2}{\bbR^2}{\bbR}$, $\Pi_i(x_1, x_2) = x_i$, $i = 1, 2$. 
To prove  the convergence in \eqref{eq:cvg_in_d_of_Vp}, let us first define the bounded and continuous function
	\begin{equation*}
		\FJADEF{\phi}{[0, \infty) \times [1, \infty)}{\bbR}, \quad \phi(x, y) = \min\VIT{\frac{x}{y}, 1}.
	\end{equation*}
Therefore, combining Proposition \ref{prop:cvg_in_dist_of_(A,v)} with \cite[Theorem 3.2.3]{Durrett} we get
	\begin{equation}\label{eq:exp_f_cvg}
		\bbE\UGL{\phi\OBL{A_k^{(P^{(n)})}(i), v_k^{(P^{(n)})}}} \xrightarrow[n \rightarrow \infty]{} \bbE\UGL{\phi\OBL{A_k^{(P^{(\infty)})}(i), v_k^{(P^{(\infty)})}}}.
	\end{equation}
	Notice that we always have $A_k^{(P)}(i) \le v_k^{(P)}$ since the random variable $A_k^{(P)}(i)$ counts the number of times the node $i$ was sampled until $k$ different nodes were sampled and the random variable $v_k^{(P)}$ counts the total number of samplings until $k$ distinct elements were sampled. Hence,
	\begin{equation*}
		\phi\OBL{A_k^{(P)}(i), v_k^{(P)}} = \frac{A_k^{(P)}(i)}{v_k^{(P)}}.
	\end{equation*}
	Combining the latter with \eqref{eq:exp_f_cvg} and using $V_k^{(P^{(n)})}(p_i^{(n)}) = \bbE[A_k^{(P^{(n)})}(i) / v_k^{(P^{(n)})}]$, $n \in \bbN \cup \{\infty\}$, we obtain \eqref{eq:cvg_in_d_of_Vp}.
\end{proof}

\section{Asymptotic fairness}\label{sec:Asymptotic_fairness}

We start this section with the case $k=2$, i.e., we sample until we get two different nodes. This small choice of $k$ allows us to perform analytical calculations and prove some facts rigorously. We prove that the voting scheme $(id, 1)$ is robust to merging but not fair. We also show that the more the node splits, the more voting power it can gain. However, with this procedure, the voting power does not grow to $1$, but a limit strictly less than $1$.

\begin{PROP}\label{prop:unfairness}
We consider the voting scheme $(id, 1)$ and let $(m_i)_{i \in \bbN}$ be the weight distribution of the nodes. Let $P = (p_i)_{i \in \bbN}$ be the corresponding probability distribution on $\bbN$, let $r \in \bbN,$ $i$ a node,  and $k = 2$. 
Then, for every $r$-splitting $m_{i_1^{(r)}}, \ldots, m_{i_r^{(r)}}>0$ of the node $i$, we have that 
	\begin{equation}\label{eq:diff_after-before_splitting}
	V_k^{(P)}(m_i) < \sum_{j = 1}^r V_k^{(\widehat{P}_{r,i})}(m_{i_j^{(r)}}).
			\end{equation}
In other words,  the voting scheme $(id, 1)$ is robust to merging, but not robust to splitting. The difference of the voting power after splitting and before splitting reaches its maximum for
	\begin{equation*}
		m_{i_j^{(r)}} = \frac{m_i}{r}, \qquad j \in \{1, 2, \ldots, r\}.
	\end{equation*}
	Furthermore, for this particular $r$-splitting, we have that the sequence
	\begin{equation*}
		\OBL{\Bigg( \sum_{j = 1}^r V_k^{(\widehat{P}_{r,i})}(m_{i_j^{(r)}}) \Bigg) - V_k^{(P)}(m_i)}_{r \in \bbN}
	\end{equation*}
	is strictly increasing and has a limit strictly less than $1$.
\end{PROP}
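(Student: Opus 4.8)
The plan is to reduce everything to a single-variable analysis by first deriving a closed form for the voting power $V_2^{(P)}(p_i)$ and then tracking how it changes under splitting. Since $k=2$, a greedy sample consists of a run of copies of the first sampled node followed by exactly one distinct node. Conditioning on whether the first node is $i$ or some $j \neq i$ and summing the resulting geometric series, I expect to obtain
\[
V_2^{(P)}(p_i) = 1 - (1-p_i)\,\phi(p_i) + p_i \sum_{j \neq i} h(p_j),
\]
where $\phi(x) := -\ln(1-x)/x = \sum_{n \ge 0} x^n/(n+1)$ and $h(x) := \phi(x) - 1 = \sum_{n \ge 1} x^n/(n+1)$; here $p_i = m_i$ because $f \equiv id$. (One can instead specialize Proposition \ref{prop:dist_of_Akp-Vkp}, but the direct run-length computation is cleaner, and a quick check that $\sum_i V_2^{(P)}(p_i)=1$ validates the formula.)

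Next I would compute the difference $D := \sum_{u=1}^r V_2^{(\widehat P_{r,i})}(q_u) - V_2^{(P)}(p_i)$ for an $r$-splitting $q_1,\dots,q_r$ of $p_i$ (so $\sum_u q_u = p_i$, and since $f \equiv id$ the probabilities of all other nodes are unchanged). The crucial bookkeeping point is that the contribution $p_i \sum_{j \neq i} h(p_j)$ coming from the unchanged nodes appears identically before and after splitting and hence cancels, while the cross-terms among the split nodes reassemble into $\sum_u h(q_u)(p_i - q_u)$. After simplification I expect the clean identity
\[
D = (1-p_i)\UGL{(r-1) + \phi(p_i) - \sum_{u=1}^r \phi(q_u)},
\]
which, notably, depends only on $p_i$ and the split, not on the rest of the weight distribution.

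The three assertions then follow from convexity-type properties of the power series $\phi$ and $h$. For the strict inequality \eqref{eq:diff_after-before_splitting}, rewriting $(r-1)+\phi(p_i) - \sum_u \phi(q_u) = h(p_i) - \sum_u h(q_u)$ reduces the claim to strict superadditivity of $h$; since $h(0)=0$ and $h(x)/x = \sum_{n\ge1} x^{n-1}/(n+1)$ is strictly increasing, the star-shapedness bound $h(q_u) < (q_u/p_i)\,h(p_i)$ (valid as each $q_u<p_i$ for a genuine split) summed over $u$ gives $\sum_u h(q_u) < h(p_i)$, hence $D>0$ (robustness to merging, failure of robustness to splitting). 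For the maximizer, since $\phi$ is strictly convex (positive power-series coefficients, so $\phi''>0$ on $(0,1)$), Jensen's inequality yields $\sum_u \phi(q_u) \ge r\,\phi(p_i/r)$ with equality iff all $q_u = p_i/r$; as $D$ is a strictly decreasing affine function of $\sum_u \phi(q_u)$, it is maximized exactly at the uniform split.

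Finally, for the uniform split I would expand $r\,\phi(p_i/r) = r + p_i/2 + \sum_{n\ge2} p_i^n/((n+1)r^{n-1})$, giving
\[
D_r = (1-p_i)\UGL{\phi(p_i) - 1 - \tfrac{p_i}{2} - \sum_{n \ge 2}\frac{p_i^n}{(n+1)\,r^{n-1}}}.
\]
Each subtracted term is strictly decreasing in $r$, so $(D_r)_{r\in\bbN}$ is strictly increasing, and letting $r \to \infty$ gives the limit $(1-p_i)\bigl(\phi(p_i) - 1 - p_i/2\bigr) = (1-p_i)\sum_{n\ge2} p_i^n/(n+1) < p_i^2 < 1$. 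I expect the main obstacle to be Step~1: carrying out the conditioning and geometric summation carefully enough to recognize the logarithmic generating function and, above all, to isolate the additively separable term $p_i\sum_{j\ne i}h(p_j)$, since this separation is precisely what produces the later cancellation and the distribution-independence of $D$. Once the closed form is in hand, the remaining steps are routine power-series convexity arguments.
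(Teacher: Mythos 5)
Your proposal is correct, and it reaches the paper's two key identities by the same route: the closed form $V_2^{(P)}(p_i) = 1-(1-p_i)\phi(p_i)+p_i\sum_{j\neq i}h(p_j)$ and the distribution-independent difference $D=(1-p_i)\bigl[(r-1)+\phi(p_i)-\sum_u\phi(q_u)\bigr]$ are exactly what the paper derives (in logarithmic rather than power-series notation), and your bookkeeping of the cancellation and of the cross-terms $\sum_u h(q_u)(p_i-q_u)$ checks out. Where you genuinely diverge is in how the three sub-claims are then established. The paper proves positivity by iterating a two-variable logarithmic inequality (Proposition \ref{prop:log_ineq}) $r-1$ times, identifies the maximizer via Lagrange multipliers (Lemma \ref{lem:Lagrange_multipliers}), and obtains monotonicity in $r$ and the limit by a calculus argument plus two applications of L'Hospital's rule (Proposition \ref{prop:tau_r_is_inc}). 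You instead exploit the power-series structure of $\phi$ and $h$: superadditivity of $h$ from the monotonicity of $h(x)/x$, the maximizer from strict convexity of $\phi$ and Jensen, and the monotone limit from termwise comparison of $r\phi(p_i/r)=r+p_i/2+\sum_{n\ge2}p_i^n/((n+1)r^{n-1})$. Your versions are more elementary and dispense with all three auxiliary results in the Appendix, at the cost of nothing I can see; the only shared caveat is that the strict inequality \eqref{eq:diff_after-before_splitting} requires $r\ge2$ (for $r=1$ one gets $D=0$), an edge case present in the paper's statement as well.
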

\begin{proof}
	Denote with $Y_i = A_2^{(P)}(i)$ the number of times that the node $i$ was sampled from the distribution $P$ until we sampled $2$ different nodes, and with $Y_{i_j^{}} = A_2^{(\widehat{P}_{r,i})}({i_j^{(r)}})$, $j \in \{1, 2, \ldots, r\}$, the number of times the node $i_j^{(r)}$ was sampled from the distribution $\widehat{P}_{r,i}$ until we sampled $2$ different nodes. We also write $V(m_i):= V_k^{(P)}(m_i)$ and $V^{(r)}(m_{i_j^{(r)}}):=V_k^{(\widehat{P}_{r,i})}(m_{i_j^{(r)}}).$ Using these notations, we have
	\begin{align*}
		V(m_i)
		& = \sum_{u \in \bbN \setminus \{i\}} \sum_{y_u = 1}^{\infty} \bbP(Y_i = 1, Y_u = y_u) \cdot \frac{1}{1 + y_u} + \sum_{u \in \bbN \setminus \{i\}} \sum_{y_i = 1}^{\infty} \bbP(Y_i = y_i, Y_u = 1) \cdot \frac{y_i}{1 + y_i} \\
		& = p_i \sum_{u \in \bbN \setminus \{i\}} \sum_{y_u = 1}^{\infty} p_u^{y_u} \cdot \frac{1}{1 + y_u} + \sum_{u \in \bbN \setminus \{i\}} p_u \sum_{y_i = 1}^{\infty} p_i^{y_i} \cdot \frac{y_i}{1 + y_i} \\
		& = p_i \sum_{u \in \bbN \setminus \{i\}} \OBL{\frac{-\log(1 - p_u)}{p_u} - 1} + (1 - p_i) \cdot \frac{\frac{p_i}{1 - p_i} + \log(1 - p_i)}{p_i} \\
		& = -p_i \sum_{u \in \bbN \setminus \{i\}} \OBL{\frac{\log(1 - p_u)}{p_u} + 1} + \frac{(1 - p_i) \log(1 - p_i)}{p_i} + 1.
	\end{align*}
	Similarly, for $j \in \{1, 2, \ldots, r\}$ we have
	\begin{align*}
		V^{(r)}(m_{i_j^{(r)}})
		& = \sum_{u \in \bbN \setminus \{i\}} \sum_{y_u = 1}^{\infty} \bbP(Y_{i_j^{}} = 1, Y_u = y_u) \cdot \frac{1}{1 + y_u}  \\
		& \qquad + \sum_{\substack{\ell = 1 \\ \ell \neq j}}^r \sum_{y_{i_\ell} = 1}^{\infty} \bbP(Y_{i_j^{}} = 1, Y_{i_l^{}} = y_{i_\ell}) \cdot \frac{1}{1 + y_{i_\ell}}  \\
		& \qquad + \sum_{u \in \bbN \setminus \{i\}} \sum_{y_{i_j} = 1}^{\infty} \bbP(Y_{i_j^{}} = y_{i_j}, Y_u = 1) \cdot \frac{y_{i_j}}{1 + y_{i_j}} \\
		& \qquad + \sum_{\substack{\ell = 1 \\ \ell \neq j}}^r \sum_{y_{i_j} = 1}^{\infty} \bbP(Y_{i_j^{}} = y_{i_j}, Y_{i_l^{}} = 1) \cdot \frac{y_{i_j}}{1 + y_{i_j}} \\
		& = -\widehat{p}_{i_j^{(r)}} \sum_{u \in \bbN \setminus \{i\}} \OBL{\frac{\log(1 - p_u)}{p_u} + 1} - \widehat{p}_{i_j^{(r)}} \sum_{\substack{\ell = 1 \\ \ell \neq j}}^r \OBL{\frac{\log(1 - \widehat{p}_{i_l^{(r)}})}{\widehat{p}_{i_l^{(r)}}} + 1} \\
		& \qquad + \frac{(1 - \widehat{p}_{i_j^{(r)}})\log(1 - \widehat{p}_{i_j^{(r)}})}{\widehat{p}_{i_j^{(r)}}} + 1.
	\end{align*}
	Combining the above calculations, we obtain
	\begin{align*}
		\sum_{j = 1}^r V^{(r)}(m_{i_j^{(r)}}) - V(m_i)
		& = -\sum_{j = 1}^r p_{i_j^{(r)}} \sum_{u \in \bbN \setminus \{i\}} \OBL{\frac{\log(1 - p_u)}{p_u} + 1} - \sum_{j = 1}^r \sum_{\substack{\ell = 1 \\ \ell \neq j}}^r \frac{\widehat{p}_{i_j^{(r)}} \log(1 - \widehat{p}_{i_l^{(r)}})}{\widehat{p}_{i_l^{(r)}}} \\
		& \qquad - \sum_{j = 1}^r p_{i_j^{(r)}} (r - 1) + \sum_{j = 1}^r \frac{(1 - \widehat{p}_{i_j^{(r)}})\log(1 - \widehat{p}_{i_j^{(r)}})}{\widehat{p}_{i_j^{(r)}}} + r \\
		&  \qquad	+ p_i \sum_{u \in \bbN \setminus \{i\}} \OBL{\frac{\log(1 - p_u)}{p_u} + 1} - \frac{(1 - p_i) \log(1 - p_i)}{p_i} - 1 \\
		& = \sum_{j = 1}^r \frac{\log(1 - \widehat{p}_{i_j^{(r)}})}{\widehat{p}_{i_j^{(r)}}} \OBL{1 - \widehat{p}_{i_j^{(r)}} - \sum_{\substack{\ell = 1 \\ \ell\neq j}}^r \widehat{p}_{i_\ell^{(r)}}} + r - 1 - p_i(r - 1) \\
		& \qquad - \frac{(1 - p_i) \log(1 - p_i)}{p_i} \\
		& = (1 - p_i) \UGL{(r - 1) + \sum_{j = 1}^r \frac{\log(1 - \widehat{p}_{i_j^{(r)}})}{\widehat{p}_{i_j^{(r)}}} - \frac{\log(1 - p_i)}{p_i}}.
	\end{align*}
	We take $x_1, x_2, \ldots x_r \in (0, 1)$ such that $\sum_{j = 1}^r x_j = 1$ and set
	\begin{equation*}
		\widehat{p}_{i_j^{(r)}} = p_i \cdot x_j, \quad j\in\{1,\ldots,r\}.
	\end{equation*}
	This gives us
	\begin{equation*}
		\sum_{j = 1}^r V^{(r)}(m_{i_j^{(r)}}) - V(m_i) = (1 - p_i) \UGL{(r - 1) + \sum_{j = 1}^r \frac{\log(1 - p_i x_j)}{p_i x_j} - \frac{\log(1 - p_i)}{p_i}}.
	\end{equation*}
	Define
	\begin{equation*}
		\phi(x_1, \ldots, x_r) := (r - 1) + \sum_{j = 1}^r \frac{\log(1 - p_i x_j)}{p_i x_j} - \frac{\log(1 - p_i)}{p_i}.
	\end{equation*}
	First,
we need to show that $\phi(x_1, x_2, \ldots, x_r) > 0$ for all $x_1, x_2, \ldots, x_r \in (0, 1)$ such that $\sum_{j = 1}^r x_j = 1$. Using Proposition \ref{prop:log_ineq} repeatedly ($r - 1$ times), we get
	\begin{align*}
		\phi(x_1, \ldots, x_r)
		& = (r - 2) + \OBL{1 + \frac{\log(1 - p_i x_1)}{p_i x_1} + \frac{\log(1 - p_i x_2)}{p_i x_2}} \\
		& \qquad + \sum_{j = 3}^r \frac{\log(1 - p_i x_j)}{p_i x_j} - \frac{\log(1 - p_i)}{p_i} \\
		& > (r - 2) + \frac{\log(1 - p_i(x_1 + x_2))}{p_i(x_1 + x_2)} + \sum_{j = 3}^r \frac{\log(1 - p_i x_j)}{p_i x_j} - \frac{\log(1 - p_i)}{p_i} \\
		& = (r - 3) + \OBL{1 + \frac{\log(1 - p_i(x_1 + x_2))}{p_i(x_1 + x_2)} + \frac{\log(1 - p_i x_3)}{p_i x_3}} \\
		& \qquad + \sum_{j = 4}^r \frac{\log(1 - p_i x_j)}{p_i x_j} - \frac{\log(1 - p_i)}{p_i} \\
		& > (r - 3) + \frac{\log(1 - p_i(x_1 + x_2 + x_3))}{p_i(x_1 + x_2 + x_3)} + \sum_{j = 4}^r \frac{\log(1 - p_i x_j)}{p_i x_j} - \frac{\log(1 - p_i)}{p_i} \\
		& \qquad \vdots\hspace{4cm} \vdots \hspace{4cm} \vdots \\
		& > 1 + \frac{\log(1 - p_i \sum_{j = 1}^{r - 1} x_j)}{p_i \sum_{j = 1}^{r - 1} x_j} + \frac{\log(1 - p_i x_r)}{p_i x_r} - \frac{\log(1 - p_i)}{p_i}  > 0.
	\end{align*}
The second claim  of this proposition is that the expression $$\sum_{j = 1}^r V^{(r)}(m_{i_j^{(r)}}) - V(m_i)$$ reaches its maximum for $\widehat{p}_{i_j^{(r)}} = \frac{p_i}{r}, j \in \{1, 2, \ldots, r\}$. This follows directly from Lemma \ref{lem:Lagrange_multipliers}, where we show that $\phi$ attains its unique maximum for $(x_1, \ldots, x_r) = (\frac{1}{r}, \ldots, \frac{1}{r})$. Denote with
	\begin{align*}
		\tau_r(p_i) = (1 - p_i) \phi \OBL{\frac{1}{r}, \ldots, \frac{1}{r}}
		& = (1 - p_i) \UGL{(r - 1) + \sum_{j = 1}^r \frac{\log(1 - \frac{p_i}{r})}{\frac{p_i}{r}} - \frac{\log(1 - p_i)}{p_i}} \\
		& = (1 - p_i) \UGL{r + \frac{r^2 \log(1 - \frac{p_i}{r})}{p_i} - \frac{\log(1 - p_i)}{p_i} - 1}.
	\end{align*}
By Proposition \ref{prop:tau_r_is_inc} we have that the sequence $(\tau_r(p_i))_{r \in \bbN}$ is strictly increasing and
	\begin{equation*}
		\tau(p_i) = \lim_{r \to \infty} \tau_r(p_i) = (1 - p_i) \OBL{-\frac{p_i}{2} - \frac{\log(1 - p_i)}{p_i} - 1}.
	\end{equation*}
\nopagebreak[4]
\end{proof}

\begin{REM}
	We consider the  function $\FJADEF{\tau}{(0, 1)}{\bbR}$ defined by
	\begin{equation*}
		\tau(m) = (1 - m) \OBL{-\frac{m}{2} - \frac{\log(1 - m)}{m} - 1}.
	\end{equation*}
This function describes the gain in voting power a node with initial weight $m$ can achieve by splitting up into infinitely many nodes. As  Figure \ref{fig:graph_of_function_tau} shows, this maximal gain in voting power is bounded.  The function $\tau$ attains maximum at $m^* \approx 0.82$ and the maximum is $\tau(m^*) \approx 0.12$. This means that a node that initially has around $82\%$ of the total amount of mana can obtain the biggest gain in the voting power (by theoretically splitting into infinite number of nodes) and this gain is approximately $0.12$. Loosely speaking, if a voting power of a node increases by $0.12$, this means that during the querying, the proportion of queries that are addressed to this particular node increases by around $12\%$.
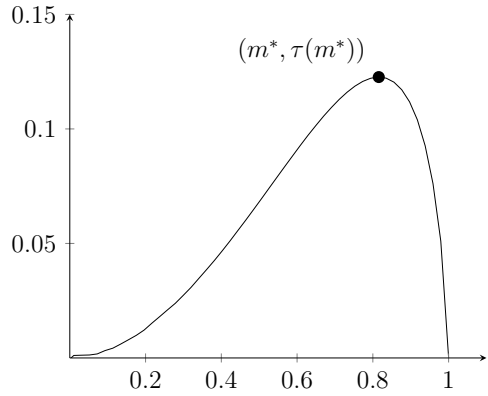
\begin{figure}[h!]
	\centering
	\begin{tikzpicture}[scale = 0.8]
		\begin{axis}[
			ymin = 0,
			ymax = 0.15,
			xmin = 0,
			xmax = 1.1,
			axis on top = true,
			axis x line = middle,
			axis y line = middle,
			ytick={0.05,0.1,0.15},
			yticklabels={$0.05$, $0.1$,$0.15$},
			]
			\addplot [samples = 50,
				domain = -0.01:1]
				{(1 - x) * (-x/2 - ln(1 - x)/x - 1)};
			\node[label = {130:{$(m^*, \tau(m^*))$}}, circle, fill, inner sep = 2pt] at (axis cs:0.815658,0.122642) {};
		\end{axis}
	\end{tikzpicture}
	\caption{Graph of the function $\tau$.}\label{fig:graph_of_function_tau}
\end{figure}
\end{REM}

\begin{COR}\label{cor:asymp_unfairness}
Let $m^{(n)}$ be a sequence of weight distributions with corresponding probability distributions $(P^{(n)})_{n \in \bbN}$, $P^{(n)} = (p_i^{(n)})_{i \in \bbN}$  on $\bbN$. Let $m^{(\infty)}$ be a weight distribution such that for its corresponding probability distributions  $P^{(\infty)} = (p_i^{(\infty)})_{i \in \bbN}$ we have that  $$\norm[1]{P^{(n)} - P^{(\infty)}}_{\infty} = \sup_{i \in \bbN} \aps{p_i^{(n)} - p_i^{(\infty)}} \xrightarrow[n \rightarrow \infty]{} 0.$$ Furthermore, we consider a sequence of $r$-splittings  $m_{i_1^{(r)}}^{(n)}, \ldots, m_{i_r^{(r)}}^{(n)} > 0$ of a node $i$ such that  $ m_{i_j^{(r)}}^{(n)} \xrightarrow[n \rightarrow \infty]{}  m_{i_j^{(r)}}^{(\infty)}$, $j \in \{1, 2, \ldots, r\}$, for some $r$-splitting $m^{(\infty)}$. Then, for $k = 2$ we have
	\begin{align*}
		\lim_{n \to \infty}
		& \OBL{\Bigg( \sum_{j = 1}^r V_k^{(\widehat P_{r,i}^{(n)})}(\widehat p_{i_j^{(r)}}^{(n)}) \Bigg) - V_k^{(P^{(n)})}(p_i^{(n)})} \\
		& = \Bigg( \sum_{j = 1}^r V_k^{(\widehat P_{r,i}^{(\infty)})}(\widehat p_{i_j^{(r)}}^{(\infty)}) \Bigg) - V_k^{(P^{(\infty)})}(p_i^{(\infty)})  > 0.
	\end{align*}
\end{COR}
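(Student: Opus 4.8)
The plan is to reduce the statement to two results already in hand: the convergence Corollary~\ref{cor:three_cvg_results}, which will yield the limit identity, and the strict inequality of Proposition~\ref{prop:unfairness}, which will yield positivity of the limit. All the genuine work lies in verifying that the hypothesis of Corollary~\ref{cor:three_cvg_results} (supremum-norm convergence) holds not only for $P^{(n)}\to P^{(\infty)}$, which is assumed, but also for each split sequence $\widehat P_{r,i}^{(n)}$.

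The decisive simplification comes from $f\equiv id$. Since each weight distribution sums to one and $\sum_{u=1}^r m_{i_u^{(r)}}^{(n)} = m_i^{(n)}$, the normalizing denominator in Definition~\ref{def:seqrsplitting} equals $\sum_{u\in\bbN\setminus\{i\}} m_u^{(n)} + m_i^{(n)} = 1$. Thus $\widehat P_{r,i}^{(n)}$ is just $P^{(n)}$ with the entry $p_i^{(n)}=m_i^{(n)}$ replaced by the $r$ entries $m_{i_1^{(r)}}^{(n)},\dots,m_{i_r^{(r)}}^{(n)}$, and similarly for the limit. Relabelling the index set $\{1,\dots,i-1,i_1^{(r)},\dots,i_r^{(r)},i+1,\dots\}$ as $\bbN$, I would bound the supremum norm of the difference coordinatewise: on the coordinates $j\neq i$ it is at most $\norm[1]{P^{(n)}-P^{(\infty)}}_\infty$, while on the finitely many split coordinates it equals $\max_{1\le j\le r}\aps{m_{i_j^{(r)}}^{(n)}-m_{i_j^{(r)}}^{(\infty)}}$. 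Both tend to zero, the first by assumption and the second by the assumed coordinatewise convergence of the splittings, so
\[
\norm[1]{\widehat P_{r,i}^{(n)} - \widehat P_{r,i}^{(\infty)}}_\infty \xrightarrow[n \to \infty]{} 0 .
\]

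With this established, I would invoke Corollary~\ref{cor:three_cvg_results}, in the form \eqref{eq:cvg_in_d_of_Vp}, twice: once for $P^{(n)}\to P^{(\infty)}$ at the node $i$, giving $V_k^{(P^{(n)})}(p_i^{(n)})\to V_k^{(P^{(\infty)})}(p_i^{(\infty)})$, and once for $\widehat P_{r,i}^{(n)}\to\widehat P_{r,i}^{(\infty)}$ at each fixed split node $i_j^{(r)}$, giving $V_k^{(\widehat P_{r,i}^{(n)})}(\widehat p_{i_j^{(r)}}^{(n)})\to V_k^{(\widehat P_{r,i}^{(\infty)})}(\widehat p_{i_j^{(r)}}^{(\infty)})$. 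As the sum over $j$ is finite, passing to the limit term by term delivers the limit identity. Positivity of the limit is then exactly Proposition~\ref{prop:unfairness} applied to the distribution $P^{(\infty)}$ and the $r$-splitting $m_{i_1^{(r)}}^{(\infty)},\dots,m_{i_r^{(r)}}^{(\infty)}$, which is an admissible $r$-splitting, its parts being strictly positive by hypothesis.

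The main obstacle is the supremum-norm convergence of the split distributions; once it is secured, the remainder is a mechanical application of the two quoted results. The only subtlety to watch is that Proposition~\ref{prop:unfairness} presupposes $0<p_i^{(\infty)}<1$, since its closed-form expressions involve $\log(1-p_i)/p_i$. Here $p_i^{(\infty)}=\sum_{j=1}^r m_{i_j^{(r)}}^{(\infty)}>0$ because the parts are positive, and $p_i^{(\infty)}<1$ as long as the splitting does not absorb the entire mass, so the standing hypothesis that the limit is a genuine $r$-splitting keeps us in the regime where the proposition applies.
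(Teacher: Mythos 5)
Your proposal is correct and takes essentially the same route as the paper, whose entire proof is to cite Corollary \ref{cor:three_cvg_results} for the convergence and Proposition \ref{prop:unfairness} for the strict positivity. The only difference is that you explicitly verify the supremum-norm convergence $\norm[1]{\widehat P_{r,i}^{(n)} - \widehat P_{r,i}^{(\infty)}}_\infty \to 0$ needed to apply Corollary \ref{cor:three_cvg_results} to the split distributions, a step the paper leaves implicit.
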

\begin{proof} The convergence follows directly from Corollary \ref{cor:three_cvg_results}, and the strict positivity of the limit follows from Proposition \ref{prop:unfairness}.
\end{proof}

\begin{REM}\label{rem:asymp_unfairness}
 Corollary \ref{cor:asymp_unfairness} implies that if $k=2$  and if the sequence of weight distributions $(P^{(n)})_{n \in \bbN}$ converges to a non-trivial probability distribution on $\bbN$, the voting scheme $(id, 1)$ is not asymptotically fair. Applying this result to the sequence of Zipf distributions defined in \eqref{eq:seq_of_Zipf_law}, we see that for $s > 1$, and $k = 2$, the voting scheme is not  asymptotically fair. Simulations suggest, see Figures \ref{fig:Asymptotic_fairness} and \ref{fig:incMeank}, that for higher values of  $k$ the difference in voting power of the node $i$ before and after the splitting does not converge to zero as the number of nodes in the network grows to infinity.
\end{REM}

In the following proposition we give a condition on the sequence of weight distributions $(P^{(n)})_{n \in \bbN}$ under which the voting scheme $(id, 1)$ is asymptotically fair for any choice of  the parameter $k$.

\begin{TM}\label{thm:asymptotic_fairness}
Let $k$ be  a positive integer and  $m^{(n)}$ be a sequence of weight distributions with corresponding probability distributions $(P^{(n)})_{n \in \bbN}$, $P^{(n)} = (p_i^{(n)})_{i \in \bbN}$  on $\bbN$.  We assume that  $\norm[1]{P^{(n)}}_{\infty} \xrightarrow[n \rightarrow \infty]{} 0$.  Furthermore, we consider a sequence of $r$-splittings  $m_{i_1^{(r)}}^{(n)}, \ldots, m_{i_r^{(r)}}^{(n)} > 0$ of a given node $i$ such that  $ m_{i_j^{(r)}}^{(n)} \xrightarrow[n \rightarrow \infty]{}  m_{i_j^{(r)}}^{(\infty)}$, $j \in \{1, 2, \ldots, r\}$, for some $r$-splitting $m^{(\infty)}$. 
Then, 
	\begin{equation*}
		\APS{\Bigg( \sum_{j = 1}^r V_k^{(\widehat P_{r,i}^{(n)})}(\widehat p_{i_j^{(r)}}^{(n)}) \Bigg) - V_k^{(P^{(n)})}(p_i^{(n)})} \xrightarrow[n \rightarrow \infty]{} 0,
	\end{equation*}
	i.e., the voting scheme $(id, 1)$ is asymptotically fair if the sequence of weight distributions converges in the supremum norm to $0$.
	\end{TM}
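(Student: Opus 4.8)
The plan is to exploit the fact that, under the hypothesis $\norm[1]{P^{(n)}}_{\infty} \to 0$, \emph{every} voting power occurring in the statement tends to $0$; the difference then vanishes for the simple reason that the subtracted term and each of the $r$ summands separately vanish. To organise this, first I would record that the $r$-splitting does not enlarge the supremum norm: since $f \equiv id$ and the weights sum to $1$, the denominators in Definition \ref{def:seqrsplitting} all equal $1$, so $\widehat{p}_j^{(n)} = p_j^{(n)}$ for $j \neq i$ and $\widehat{p}_{i_j^{(r)}}^{(n)} = m_{i_j^{(r)}}^{(n)} \le m_i^{(n)} = p_i^{(n)}$. Hence $\norm[1]{\widehat{P}_{r,i}^{(n)}}_{\infty} \le \norm[1]{P^{(n)}}_{\infty} \to 0$ and each split weight tends to $0$ as well. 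Consequently the whole theorem reduces to the following single claim: \emph{for any sequence $Q^{(n)} = (q_\ell^{(n)})_{\ell \in \bbN}$ of probability distributions on $\bbN$ with $\norm[1]{Q^{(n)}}_{\infty} \to 0$ and any fixed node $\ell$, one has $V_k^{(Q^{(n)})}(q_\ell^{(n)}) \to 0$}, which I would then apply once with $(Q^{(n)}, \ell) = (P^{(n)}, i)$ and $r$ times with $(Q^{(n)}, \ell) = (\widehat{P}_{r,i}^{(n)}, i_j^{(r)})$.

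To prove the claim, write $A_k := A_k^{(Q^{(n)})}(\ell)$ and $v_k := v_k^{(Q^{(n)})}$, so that $V_k^{(Q^{(n)})}(q_\ell^{(n)}) = \bbE[A_k / v_k]$. Since $0 \le A_k \le v_k$, we have $A_k/v_k \le \bbjedan\{A_k \ge 1\}$, whence
\[
V_k^{(Q^{(n)})}(q_\ell^{(n)}) \le \bbP(A_k \ge 1) \le \bbP(A_k \ge 1, v_k = k) + \bbP(v_k > k).
\]
The second term is controlled by the estimate already obtained inside the proof of Lemma \ref{lm:v_k^n->k}, namely $\bbP(v_k > k) \le k^2 \norm[1]{Q^{(n)}}_{\infty}$. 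For the first term, on the event $\{v_k = k\}$ the first $k$ draws $X_1, \ldots, X_k$ are pairwise distinct, so $\{A_k \ge 1\}$ forces $\ell \in \{X_1, \ldots, X_k\}$; a union bound then gives $\bbP(A_k \ge 1, v_k = k) \le \sum_{t = 1}^k \bbP(X_t = \ell) = k\, q_\ell^{(n)}$. Combining the two bounds and using $q_\ell^{(n)} \le \norm[1]{Q^{(n)}}_{\infty}$ yields
\[
V_k^{(Q^{(n)})}(q_\ell^{(n)}) \le (k + k^2)\, \norm[1]{Q^{(n)}}_{\infty} \xrightarrow[n \rightarrow \infty]{} 0,
\]
which proves the claim.

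Assembling the pieces, the claim applied to $P^{(n)}$ gives $V_k^{(P^{(n)})}(p_i^{(n)}) \to 0$, and applied to $\widehat{P}_{r,i}^{(n)}$ (whose supremum norm and split weights vanish by the first paragraph) gives $V_k^{(\widehat{P}_{r,i}^{(n)})}(\widehat{p}_{i_j^{(r)}}^{(n)}) \to 0$ for each $j \in \{1, \ldots, r\}$. As the sum over $j$ has a fixed finite number $r$ of terms, $\sum_{j=1}^r V_k^{(\widehat{P}_{r,i}^{(n)})}(\widehat{p}_{i_j^{(r)}}^{(n)}) \to 0$, and hence the difference in the statement, bounded by the sum of the moduli, tends to $0$. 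This is exactly the asymptotic fairness of Definition \ref{def:asymptotic_fairness}.

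I expect the only genuinely delicate point to be conceptual rather than computational: one cannot invoke Proposition \ref{prop:cvg_in_dist_of_(A,v)} or Corollary \ref{cor:three_cvg_results} here, because the natural ``limit'' distribution is identically $0$ and thus not a probability distribution on $\bbN$. The correct picture is that the pair $(A_k^{(Q^{(n)})}(\ell), v_k^{(Q^{(n)})})$ degenerates to the deterministic value $(0, k)$, and it is precisely the quantitative tail bound $\bbP(v_k > k) \le k^2 \norm[1]{Q^{(n)}}_{\infty}$ from Lemma \ref{lm:v_k^n->k} that makes this degeneracy uniform enough to pass to expectations. I would also remark that the hypothesis $m_{i_j^{(r)}}^{(n)} \to m_{i_j^{(r)}}^{(\infty)}$ is not actually used: the bound $m_{i_j^{(r)}}^{(n)} \le \norm[1]{P^{(n)}}_{\infty}$ forces $m_{i_j^{(r)}}^{(\infty)} = 0$ automatically, so the limiting splitting is necessarily trivial.
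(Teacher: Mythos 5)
Your proof is correct, but it takes a genuinely different route from the paper. The paper couples the two samplings directly: it draws one sequence from $P^{(n)}$ and simultaneously builds a second sequence from $\widehat P^{(n)}_{r,i}$ by replacing each occurrence of $i$ with one of $i_1^{(r)},\ldots,i_r^{(r)}$, observes that the second sequence terminates no later, and bounds the difference of voting powers by an expectation of a quantity controlled by $K_k^{(n)}=v_k^{(n)}-v_{k,r}^{(n)}\ge 0$, which tends to $0$ in probability via Lemma \ref{lm:v_k^n->k}. You instead show that \emph{each} voting power in the statement is individually $O\bigl(\norm[1]{P^{(n)}}_{\infty}\bigr)$: the chain $V_k\le\bbP(A_k\ge 1)\le \bbP(A_k\ge 1, v_k=k)+\bbP(v_k>k)\le k\,q_\ell^{(n)}+k^2\norm[1]{Q^{(n)}}_{\infty}$ is valid (the tail bound is indeed the inequality established inside the proof of Lemma \ref{lm:v_k^n->k}, and the reduction $\norm[1]{\widehat P^{(n)}_{r,i}}_{\infty}\le\norm[1]{P^{(n)}}_{\infty}$ holds because the normalizing denominators equal $1$ for $f\equiv id$). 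What each approach buys: yours is shorter, yields the explicit rate $(r+1)(k+k^2)\norm[1]{P^{(n)}}_{\infty}$ for the difference, and correctly exposes that the hypothesis $m_{i_j^{(r)}}^{(n)}\to m_{i_j^{(r)}}^{(\infty)}$ is never needed; on the other hand it proves the theorem by trivializing it --- fairness holds because every fixed node's influence washes out, not because splitting fails to help --- whereas the paper's coupling compares the pre- and post-splitting configurations directly, remains meaningful in regimes where individual voting powers do not vanish (compare Corollary \ref{cor:asymp_unfairness}), and is reused in Section \ref{sec:Simulations} as a variance-reduction device. Your closing remark about why Proposition \ref{prop:cvg_in_dist_of_(A,v)} cannot be invoked here is also accurate.
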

\begin{proof}
	For simplicity, we write $P_r^{(n)} := \widehat P_{r,i}^{(n)}$, $v_k^{(n)} := v_k^{(P^{(n)})}$ and $v_{k, r}^{(n)} := v_k^{(P_r^{(n)})}$; recall that the random variable $v_k^{(P)}$ counts the number of samplings with replacement from the distribution $P$ until $k$ different elements are sampled. The main idea of the proof is to couple the random variables $v_k^{(n)}$ and $v_{k, r}^{(n)}$. We sample simultaneously from probability distributions $P^{(n)}$ and $P_r^{(n)}$ and construct two different sequences of elements that both terminate once they contain $k$ different elements. We do that in the following way: we sample an element from the distribution $P^{(n)}$. If the sampled element is not $i$, we just add this element to both sequences that we are constructing and then sample the next element. If the element $i$ is sampled, then we add $i$ to the first sequence, but to the second sequence we add one of the elements $i_1^{(r)}, \ldots, i_r^{(r)}$ according to the probability distribution $(p_{i_1^{(r)}}^{(n)} / p_i^{(n)}, \ldots, p_{i_r^{(r)}}^{(n)} / p_i^{(n)})$. Now,  the second sequence will terminate not later than  the first one since the second sequence always has at least the same amount of different elements as the first sequence. This is a consequence of the fact that, each time the element $i$ is sampled, we add one of the $r$ elements $i_1^{(r)}, i_2^{(r)}, \ldots, i_r^{(r)}$ to the second sequence while we just add $i$  to the first sequence, see Figure \ref{fig:Coupling}.
		\begin{figure}[h!]
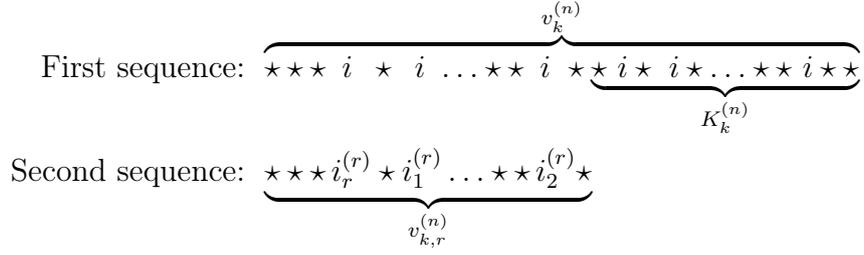

			\begin{align*}
				\textnormal{First sequence: }
				& \overbrace{\star \star \star \,\,\,i\,\,\, \star \,\,\,i\,\, \ldots \star \star \,\,\,i\,\, \star
				\underbrace{\star \,\,i \star \,\,i \star \ldots \star \star \,\,i \star \star}_{K_k^{(n)}}}^{v_k^{(n)}} \\
				\textnormal{Second sequence: }
				& \underbrace{\star \star \star\,   i_r^{(r)} \star i_1^{(r)} \ldots \star \star \, i_2^{(r)} \star}_{v_{k, r}^{(n)}}
			\end{align*}
			\caption{Coupling of random variables $v_k^{(n)}$ and $v_{k, r}^{(n)}$.}
			\label{fig:Coupling}
		\end{figure}

	Denote with
	\begin{equation*}
		K_k^{(n)} := v_k^{(n)} - v_{k, r}^{(n)}.
	\end{equation*}
	Since $v_k^{(n)} \ge v_{k, r}^{(n)}$, we have $K_k^{(n)} \ge 0$. We also introduce the random variable
	\begin{equation*}
		L_k^{(n)} := A_k^{(P^{(n)})}(i) - \sum_{j = 1}^r A_k^{(P_r^{(n)})}(i_j^{(r)}),
	\end{equation*}		
	where $A_k^{(P)}(i)$ is defined as in \eqref{eq:def_of_AkP}. The random variable $L_k^{(n)}$ is measuring the difference in the number of times the node $i$ appears in the first sequence and the number of times nodes $i_1^{(r)}, i_2^{(r)}, \ldots, i_r^{(r)}$ appear in the second sequence. At the time when the second sequence terminates, the number of times the node $i$ appeared in the first sequence is the same as the number of times that nodes $i_1^{(r)}, i_2^{(r)}, \ldots, i_r^{(r)}$ appeared in the second sequence, see Figure \ref{fig:Coupling}. Since the length of the first sequence is always larger than or equal to the length of the second sequence, it can happen that the element $i$ is sampled again before the $k$-th different element appears in the first sequence. Therefore, $L_k^{(n)} \ge 0$. Clearly, $L_k^{(n)} \le K_k^{(n)}$ because $K_k^{(n)}$ counts all the extra samplings we need to sample $k$ different elements in the first sequence, while $L_k^{(n)}$ counts only those extra samplings in which the element $i$ was sampled. Notice that if the element $i$ is not sampled before the $k$-th different element appears or if $i$ is the $k$-th different element, then $K_k^{(n)} = L_k^{(n)} = 0$.
	
	Let
	\begin{equation*}
		Y_k^{(n)} := A_k^{(P^{(n)})}(i) \quad \textnormal{and} \quad Y_{k, r}^{(n)} := \sum_{j = 1}^r A_k^{(P_r^{(n)})}(i_j^{(r)}).
	\end{equation*}
	Then
	\begin{equation*}
		V_k^{(P^{(n)})}(p_i^{(n)}) = \bbE\UGL{\frac{Y_k^{(n)}}{v_k^{(n)}}}, \quad \sum_{j = 1}^r V_k^{(P_r^{(n)})}(p_{i_j^{(r)}}^{(n)}) = \bbE\UGL{\frac{Y_{k, r}^{(n)}}{v_{k, r}^{(n)}}} \quad \textnormal{and} \quad Y_k^{(n)} = Y_{k, r}^{(n)} + L_k^{(n)}.
	\end{equation*}
Combining this with $v_k^{(n)} = v_{k, r}^{(n)} + K_k^{(n)}$, we have
	\begin{align*}
		\Bigg\vert
		& \Bigg( \sum_{j = 1}^r V_k^{(P_r^{(n)})}(p_{i_j^{(r)}}^{(n)}) \Bigg) - V_k^{(P^{(n)})}(p_i^{(n)})\Bigg\vert = \APS{\bbE\UGL{\frac{Y_{k, r}^{(n)}}{v_{k, r}^{(n)}} - \frac{Y_k^{(n)}}{v_k^{(n)}}}} = \APS{\bbE\UGL{\frac{Y_{k, r}^{(n)}}{v_{k, r}^{(n)}} - \frac{Y_{k, r}^{(n)} + L_k^{(n)}}{v_{k, r}^{(n)} + K_k^{(n)}}}} \\
		& = \APS{\bbE\UGL{\frac{Y_{k, r}^{(n)} (v_{k, r}^{(n)} + K_k^{(n)}) - (Y_{k, r}^{(n)} + L_k^{(n)}) v_{k, r}^{(n)}}{v_{k, r}^{(n)} (v_{k, r}^{(n)} + K_k^{(n)})}}} = \APS{\bbE\UGL{\frac{Y_{k, r}^{(n)} K_k^{(n)}}{v_{k, r}^{(n)} (v_{k, r}^{(n)} + K_k^{(n)})} - \frac{L_k^{(n)}}{v_{k, r}^{(n)} + K_k^{(n)}}}} \\
		& \le \bbE\UGL{\frac{Y_{k, r}^{(n)}}{v_{k, r}^{(n)}} \cdot \frac{1}{v_{k, r}^{(n)} + K_k^{(n)}} \cdot K_k^{(n)} + \frac{1}{v_{k, r}^{(n)} + K_k^{(n)}} \cdot L_k^{(n)}}.
	\end{align*}
	Denote with
	\begin{equation*}
		Z_n := \frac{Y_{k, r}^{(n)}}{v_{k, r}^{(n)}} \cdot \frac{1}{v_{k, r}^{(n)} + K_k^{(n)}} \cdot K_k^{(n)} + \frac{1}{v_{k, r}^{(n)} + K_k^{(n)}} \cdot L_k^{(n)}.
	\end{equation*}
It remains to prove  that $\bbE[Z_n] \xrightarrow[n \rightarrow \infty]{} 0$. Since $Y_{k, r}^{(n)} \le v_{k, r}^{(n)}$, $k \le v_{k, r}^{(n)},$ and $L_k^{(n)} \le K_k^{(n)}$ we have
	\begin{equation*}
		Z_n \le \min\{2K_k^{(n)}, 2\}.
	\end{equation*}
	By Lemma \ref{lm:v_k^n->k} we have that $v_k^{(n)} \xrightarrow[n \rightarrow \infty]{\bbP} k$ and $v_{k, r}^{(n)} \xrightarrow[n \rightarrow \infty]{\bbP} k$ (notice that $\norm[1]{P_r^{(n)}}_{\infty} \le \norm[1]{P^{(n)}}_{\infty}$). Therefore,
	\begin{equation*}
		K_k^{(n)} = v_k^{(n)} - v_{k, r}^{(n)} \xrightarrow[n \rightarrow \infty]{\bbP} 0.
	\end{equation*}
	This implies that $Z_n \xrightarrow[n \rightarrow \infty]{\bbP} 0$. Since $Z_{n} \leq 2$ we have that $\lim_{n \to \infty} \bbE[Z_n] = 0$.
\end{proof}

\begin{REM}\label{rem:comment_on_af}
	The above proposition shows that $\norm[1]{P^{(n)}}_{\infty} \xrightarrow[n \rightarrow \infty]{} 0$ is a sufficient condition to ensure asymptotic fairness, regardless of the value of the parameter $k \in \bbN$. Applying this result to the sequence of probability distributions $(P^{(n)})_{n \in \bbN}$ defined by the Zipf's law (see \eqref{eq:seq_of_Zipf_law}) we see that for $s \le 1$ the voting scheme $(id,1)$ is asymptotically fair.
\end{REM}

\section{Simulations and conjectures}\label{sec:Simulations}
In this section, we present some numerical simulations to complement our theoretical results. We are interested in the rate of convergence in the asymptotic fairness, Theorem \ref{thm:asymptotic_fairness}, and want to support some conjectures for the situation where our theoretical results do not apply.

We always consider a Zipf law for the nodes' weight distribution; see Relation \eqref{eq:seq_of_Zipf_law}. The reasons for this assumption are presented in Subsection \ref{subsec:Zipf_law}. We always consider the voting scheme  $(id, 1)$.

Figure \ref{fig:Asymptotic_fairness} presents results of a Monte-Carlo simulation for a Zipf distribution with parameter $s\in\{0.8, 1.1\}$ and different network sizes on the $x$-axis.  For real-world applications we expect values of $k$ to be at least $20$, see also \cite{manaFPC}, and set, therefore, the sample size to $k = 20$. The $y$-axis shows the gain in voting power for the heaviest node splitting into two nodes of equal weight.  For each choice of network size, we performed $1\,000\,000$ simulations and use the empirical average as an estimator for the gain in voting power. The gray zone corresponds to the confidence interval of level $0.95$. Let us note that to decrease the variance of the estimation, we couple, as in the proof of Theorem \ref{thm:asymptotic_fairness}, the sampling in the original network with the sampling in the network after splitting. 

Theorem \ref{thm:asymptotic_fairness} and Remark \ref{rem:comment_on_af} state that if the Zipf parameter $s\le 1$ the voting scheme is asymptotically fair, i.e., the difference of the voting power after the splitting and before the splitting of a node $i \in \bbN$ goes to zero as the number of nodes in the network increases.  The left-hand side of Figure \ref{fig:Asymptotic_fairness}  indicates the speed of convergence for $s=0.8.$ 
\begin{figure}[ht]
	\centering
	\includegraphics[scale=0.44]{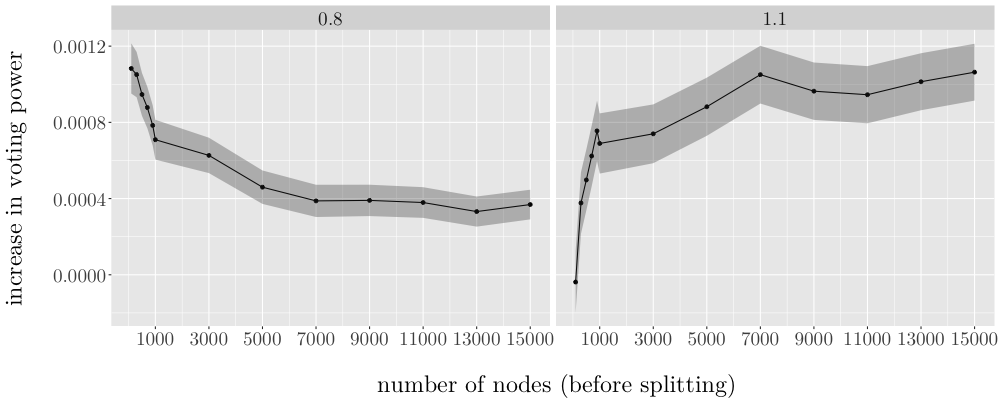}
	\caption{Increase in voting power for fixed $k = 20$, varying $N$, and two different values of $s$.}	\label{fig:Asymptotic_fairness}
\end{figure}
The right-hand side of Figure \ref{fig:Asymptotic_fairness} indicates that for $s=1.1$ the voting scheme is not asymptotically fair. Corollary \ref{cor:asymp_unfairness} states that for $k=2$, if the sequence of weight distributions $(P^{(n)})_{n \in \bbN}$ converges to a non-trivial probability distribution on $\bbN$, the voting scheme $(id, 1)$ is not asymptotically fair. 
\begin{CON}\label{con:asymp_unfairness}
Let $m^{(n)}$ be a sequence of weight distributions with corresponding probability distributions $(P^{(n)})_{n \in \bbN}$, $P^{(n)} = (p_i^{(n)})_{i \in \bbN}$  on $\bbN$. Let $m^{(\infty)}$ be a weight distribution such that for its corresponding probability distribution  $P^{(\infty)} = (p_i^{(\infty)})_{i \in \bbN}$ we have that  $$\norm[1]{P^{(n)} - P^{(\infty)}}_{\infty} = \sup_{i \in \bbN} \aps{p_i^{(n)} - p_i^{(\infty)}} \xrightarrow[n \rightarrow \infty]{} 0.$$ Furthermore, we consider a sequence of $r$-splittings  $m_{i_1^{(r)}}^{(n)}, \ldots, m_{i_r^{(r)}}^{(n)} > 0$ of a node $i$ such that  $ m_{i_j^{(r)}}^{(n)} \xrightarrow[n \rightarrow \infty]{}  m_{i_j^{(r)}}^{(\infty)}$, $j \in \{1, 2, \ldots, r\}$, for some $r$-splitting $m^{(\infty)}$. Then, for any choice of $k\in \bbN$
	\begin{align*}
		\lim_{n \to \infty}
		& \OBL{\Bigg( \sum_{j = 1}^r V_k^{(\widehat P_{r,i}^{(n)})}(\widehat p_{i_j^{(r)}}^{(n)}) \Bigg) - V_k^{(P^{(n)})}(p_i^{(n)})} \\
		& = \Bigg( \sum_{j = 1}^r V_k^{(\widehat P_{r,i}^{(\infty)})}(\widehat p_{i_j^{(r)}}^{(\infty)}) \Bigg) - V_k^{(P^{(\infty)})}(p_i^{(\infty)})  > 0.
	\end{align*}
\end{CON}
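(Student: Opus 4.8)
The plan is to separate the statement into two independent parts: (i) the existence of the limit on the left-hand side together with its identification with the quantity attached to $P^{(\infty)}$, and (ii) the strict positivity of that limit. Part (i) is already within reach of the tools developed in Section~\ref{sec:Dist_of_sample_size}. Since we work with $f\equiv id$ and $\sum_{j=1}^r m_{i_j^{(r)}}^{(n)}=m_i^{(n)}$, splitting preserves the total weight, so the normalizing denominator in Definition~\ref{def:seqrsplitting} equals $1$ and hence $\widehat p_j^{(n)}=p_j^{(n)}$ for $j\neq i$ while $\widehat p_{i_j^{(r)}}^{(n)}=m_{i_j^{(r)}}^{(n)}$. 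Consequently, on the extended index set,
\begin{equation*}
\norm[1]{\widehat P_{r,i}^{(n)}-\widehat P_{r,i}^{(\infty)}}_\infty\le\max\VIT{\norm[1]{P^{(n)}-P^{(\infty)}}_\infty,\ \max_{1\le j\le r}\aps{m_{i_j^{(r)}}^{(n)}-m_{i_j^{(r)}}^{(\infty)}}}\xrightarrow[n\to\infty]{}0.
\end{equation*}
Applying Corollary~\ref{cor:three_cvg_results}, i.e.\ the convergence \eqref{eq:cvg_in_d_of_Vp}, to both $P^{(n)}$ and $\widehat P_{r,i}^{(n)}$ and summing over the finitely many indices $j\in\{1,\dots,r\}$ shows that the left-hand side converges to $\big(\sum_{j=1}^r V_k^{(\widehat P_{r,i}^{(\infty)})}(\widehat p_{i_j^{(r)}}^{(\infty)})\big)-V_k^{(P^{(\infty)})}(p_i^{(\infty)})$, exactly as in the proof of Corollary~\ref{cor:asymp_unfairness}. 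Thus the conjecture reduces to the following purely static claim: for the fixed non-trivial distribution $P^{(\infty)}$, any node $i$ with $0<p_i^{(\infty)}<1$, any $r$-splitting, and \emph{every} $k\in\bbN$, the gain in voting power from splitting is strictly positive. This is precisely the generalization of Proposition~\ref{prop:unfairness} from $k=2$ to all $k$.

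For the positivity I would not try to reduce to $r=2$: because each voting power depends on the \emph{whole} distribution, the total power of the split group does not telescope through a sequence of pairwise splits (refining one piece also perturbs the powers of the other group members), so the naive induction breaks down. Instead I would work with the coupling of Theorem~\ref{thm:asymptotic_fairness} for general $r$ at once. With the notation $v_{k,r}$, $K_k=v_k-v_{k,r}\ge0$, $L_k$, $Y_{k,r}$ of that proof (now for the fixed distribution $P^{(\infty)}$), the exact identity derived there, stripped of its absolute value, reads
\begin{equation*}
\Bigg(\sum_{j=1}^r V_k^{(\widehat P_{r,i}^{(\infty)})}(\widehat p_{i_j^{(r)}}^{(\infty)})\Bigg)-V_k^{(P^{(\infty)})}(p_i^{(\infty)})=\bbE\UGL{\frac{1}{v_{k,r}+K_k}\OBL{\frac{Y_{k,r}}{v_{k,r}}\,K_k-L_k}}.
\end{equation*}
The entire task is to prove that this expectation is strictly positive. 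Note the contrast with asymptotic fairness: there it sufficed that $K_k\to0$ in probability to force the expectation to $0$, whereas here the distribution is fixed and $K_k>0$ with positive probability precisely because splitting the repeated copies of $i$ lets the second sequence reach $k$ distinct elements strictly sooner.

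The natural strategy is to condition on the configuration of the coupled samples up to the termination time $v_{k,r}$ of the split sequence, freezing $Y_{k,r}$, $v_{k,r}$ and the current collection of distinct nodes, and then analyze only the continuation of the unsplit sequence, whose excess length is $K_k$ and which contributes $L_k$ further copies of $i$. On the event that the split has already produced the $k$-th distinct element by separating copies of $i$, one has $K_k>0$, the ratio $Y_{k,r}/v_{k,r}$ is bounded below, and $L_k<K_k$ can be controlled; such events carry positive probability since $p_i^{(\infty)}>0$ and the distribution is non-trivial, so they contribute a strictly positive amount.

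The main obstacle is that the integrand above is \emph{not} pointwise nonnegative: on trajectories where almost every one of the $K_k$ extra draws is again a copy of $i$, one has $L_k$ close to $K_k$ while $Y_{k,r}/v_{k,r}$ may be small, making $\tfrac{Y_{k,r}}{v_{k,r}}K_k-L_k<0$. A complete proof must therefore pair the favorable events with a compensation estimate showing that these (lower-probability) unfavorable trajectories cannot outweigh them, \emph{uniformly in $k$}. For $k=2$ this difficulty is bypassed entirely through the explicit evaluation $\sum_{y\ge1}p^y/(1+y)=-\log(1-p)/p-1$ and the convexity inequality of Proposition~\ref{prop:log_ineq}, as carried out in Proposition~\ref{prop:unfairness}; but for $k\ge3$ the corresponding sums over compositions and over $(k-1)$- and $(k-2)$-subsets in \eqref{eq:dist_of_Akp-Vkp} no longer collapse to a single closed form, which is exactly why the statement is posed only as a conjecture. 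I would expect a successful attack to proceed either by constructing a refined monotone coupling for which $\bbE[\tfrac{Y_{k,r}}{v_{k,r}}K_k-L_k\mid v_{k,r}]\ge0$, or by establishing a suitable log-convexity generalization of Proposition~\ref{prop:log_ineq} adapted to the $k$-distinct-elements stopping rule.
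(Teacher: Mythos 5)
Be aware that the statement you were asked to prove is posed in the paper as a \emph{conjecture}: the authors give no proof of it for general $k$, only the case $k=2$ (Proposition \ref{prop:unfairness} together with Corollary \ref{cor:asymp_unfairness}) and numerical evidence. Your proposal therefore cannot be compared against an ``official'' argument. Judged on its own terms, the part of your argument that is actually carried out is correct and matches what the paper does where it can: with $f\equiv id$ the split distribution satisfies $\widehat p_j^{(n)}=p_j^{(n)}$ for $j\neq i$ and $\widehat p_{i_j^{(r)}}^{(n)}=m_{i_j^{(r)}}^{(n)}$, so sup-norm convergence passes to the extended index set and Corollary \ref{cor:three_cvg_results} (applied to both $P^{(n)}$ and $\widehat P_{r,i}^{(n)}$, summing over the finitely many split indices) yields the existence of the limit and its identification with the quantity attached to $P^{(\infty)}$ for \emph{every} $k$ --- exactly the route of the convergence part of Corollary \ref{cor:asymp_unfairness}. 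Your algebraic identity
\begin{equation*}
\Bigg(\sum_{j=1}^r V_k^{(\widehat P_{r,i}^{(\infty)})}(\widehat p_{i_j^{(r)}}^{(\infty)})\Bigg)-V_k^{(P^{(\infty)})}(p_i^{(\infty)})=\bbE\UGL{\frac{1}{v_{k,r}+K_k}\OBL{\frac{Y_{k,r}}{v_{k,r}}\,K_k-L_k}}
\end{equation*}
is also a correct consequence of the coupling in Theorem \ref{thm:asymptotic_fairness}, and your diagnosis that the integrand is signed (so that no pointwise argument can work) is accurate.

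The genuine gap is the one you yourself flag: the strict positivity of this expectation for fixed non-trivial $P^{(\infty)}$ and general $k$ is never established, and without it the conjecture is not proved. Since the paper's own positivity proof for $k=2$ rests on the closed-form evaluation of $\sum_{y\ge1}p^y/(1+y)$ and the inequality of Proposition \ref{prop:log_ineq} --- tools that do not extend to the sums over compositions and $(k-1)$-subsets in \eqref{eq:dist_of_Akp-Vkp} --- your reduction, while clean, leaves the conjecture exactly as open as the authors left it. One concrete addendum worth recording: the inequality as stated cannot hold for \emph{all} $k\in\bbN$, because for $k=1$ one has $v_1\equiv 1$, hence $K_1=L_1=0$ in your identity and $V_1^{(P)}(p_i)=p_i$, so the splitting gain is identically zero rather than strictly positive; the conjecture should be read with $k\ge 2$. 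Any eventual proof will also have to contend with the simulation evidence in Section \ref{sec:Simulations} (Figure \ref{fig:incMeank}) suggesting that for moderate $k$ and $r=2$ the mean gain can be \emph{negative}, which indicates that either the positivity is a genuinely asymptotic-in-$n$ phenomenon tied to the non-degenerate limit $P^{(\infty)}$, or the conjecture needs further restriction on $k$ and the splitting.
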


We take a closer look at the distribution of the increase in voting power in the above setting. Figures \ref{fig:incDensity11} and \ref{fig:incDensity08}  present density estimations, with a gaussian kernel, of the density of the increase in voting power. Again we simulated each data point $1\,000\,000$ times. The density's multimodality should be explained by the different possibilities the heaviest node before and after splitting can be chosen. Figure  \ref{fig:incDensity08} explains well the asymptotic fairness; the probability of having only a small change in voting power converges to $0$ as the number of participants grows to infinity. Figure  \ref{fig:incDensityCompared} compares the densities for different choices of $s$ in a network of $1000$ nodes. 

\begin{figure}[ht]
	\centering
	\includegraphics[scale=0.4]{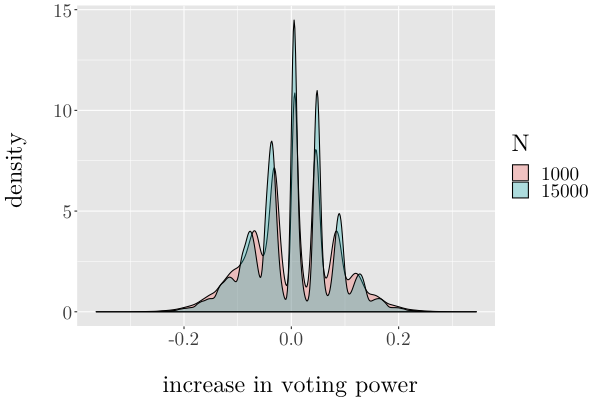}
	\caption{Density estimation for increase in voting power for two choices of network sizes. ($k = 20, s = 1.1$).}	\label{fig:incDensity11}
\end{figure}

\begin{figure}[ht]
	\centering
	\includegraphics[scale=0.4]{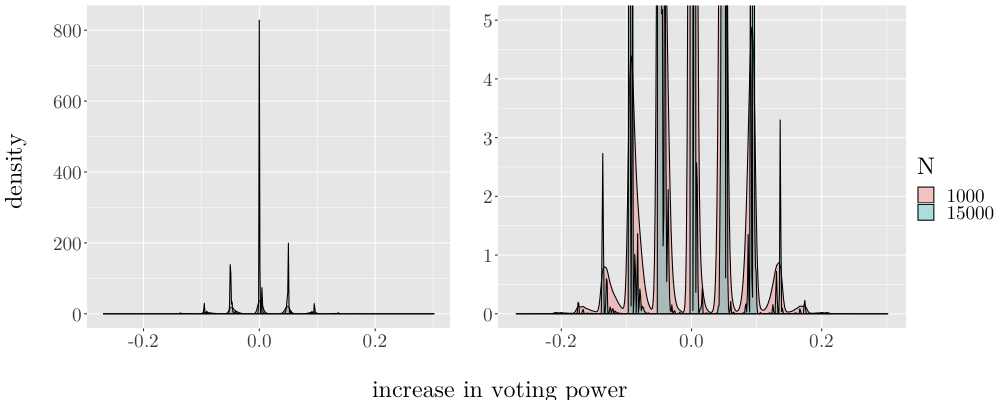}
	\caption{Density estimation for increase in voting power for two choices of network sizes. ($k = 20, s = 0.8$). The right-hand side is a zoom of the left-hand side.}	\label{fig:incDensity08}
\end{figure}

\begin{figure}[ht]
	\centering
	\includegraphics[scale=0.4]{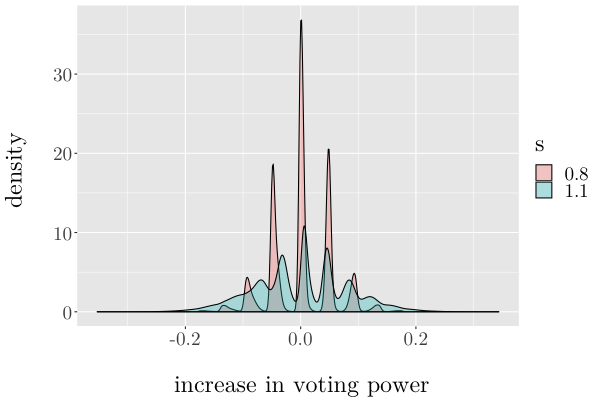}
	\caption{Density estimation for increase in voting power for two choices of the Zipf parameter $s$ for a network size of $1000$ nodes and $k = 20$.}	\label{fig:incDensityCompared}
\end{figure}

The last figures also show that even in the case where a splitting leads to an increase on average of the voting power, the splitting can also lead to less influence in a single voting round.

We kept the sample size $k=20$ in the previous simulations. Increasing the sample size increases the quality of the voting, however with the price of a higher message complexity. Figure \ref{fig:incMeank} compares the increase in voting power for different values of $k$ and $s$. We can see that an increase of $k$ increases the fairness of the voting scheme and that for some values of $k$ the increase in voting power may even be negative.

\begin{figure}[ht]
	\centering
	\includegraphics[scale=0.4]{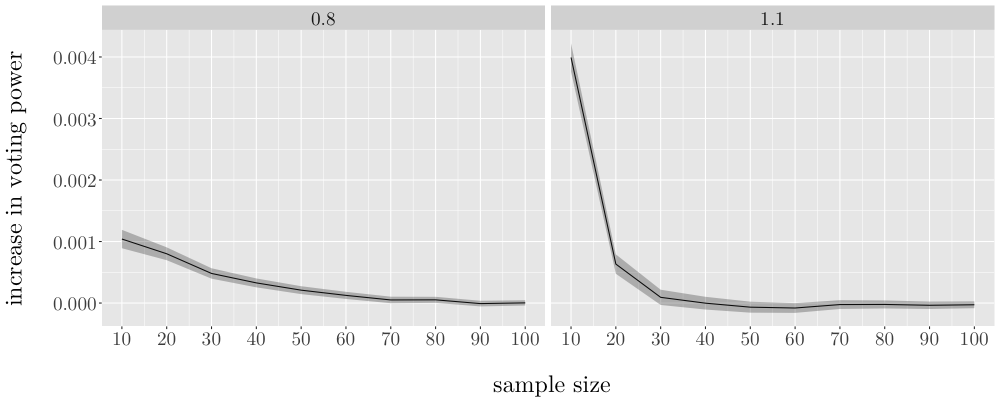}
	\caption{Mean increase in voting power for different values of the sample size $k$, two different values of $s$, and  a network size of $1000$ nodes.}	\label{fig:incMeank}
\end{figure}
Figure \ref{fig:incDensityComparedk} presents density estimations of the increase of voting power. 
\begin{figure}[ht]
	\centering
	\includegraphics[scale=0.4]{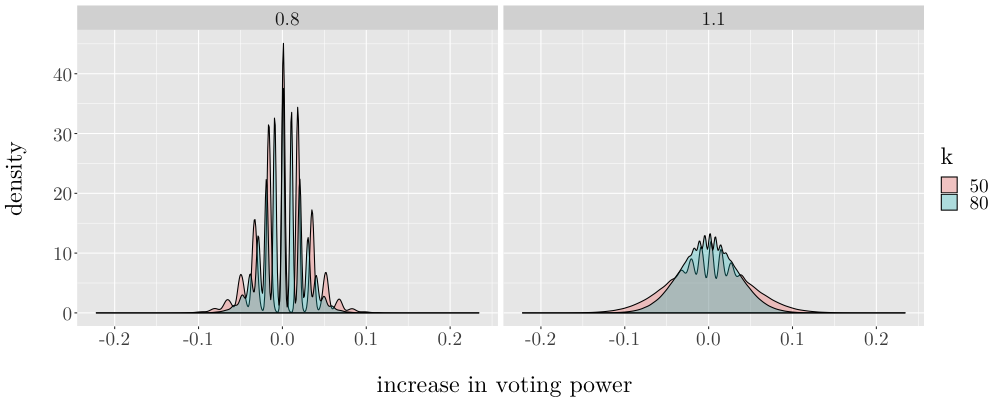}
	\caption{Density estimation for increase in voting power for two choices of the Zipf parameter $s$ for a network size of $1000$ nodes and two choices of $k$.}	\label{fig:incDensityComparedk}
\end{figure}
We can see the different behaviors in the more decentralized setting, $s<1$, and the centralized setting, $s>1$.  In the first case, it seems that the density converges to a point mass in $0$, whereas in the second case, the limit may be described by a Gaussian density.  A QQ-plot supports this first visual impression in Figure \ref{fig:qqplot}.
\begin{figure}[ht]
	\centering
	\includegraphics[scale=0.5]{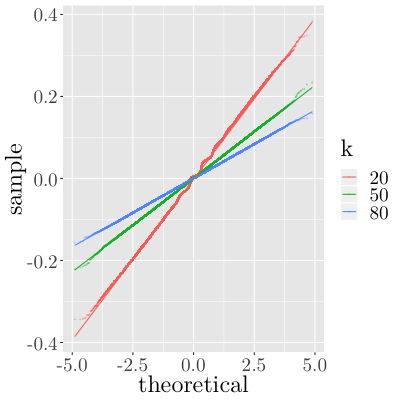}
	\caption{QQ-plots of the increase in voting power against a Gaussian distribution for different choices of $k$ and $s=1.1$ in a network of size $1000$.}	\label{fig:qqplot}
\end{figure}

While the study of the actual distribution of the increase in voting power is out of the scope of this paper we think that the following questions might be of interest.
\begin{QUE}
In what way can the distribution of the increase in voting power be described? 
\end{QUE}
\begin{QUE}
What kind of characteristics of the distribution of the increase in voting power are important for the voting scheme and its applications.
\end{QUE}
Recall that we only considered the change in voting power of the heaviest node that splits into two nodes of equal weight until now.

The goal of the next two simulations, see Figures \ref{fig:Effect_of_splitting_on_voting_power-fixed_k} and \ref{fig:Effect_of_splitting_on_voting_power-different_k_and_s}, is to inspect what happens with the voting power of a node when it splits into more than just two nodes.

For the simulation shown in Figure \ref{fig:Effect_of_splitting_on_voting_power-fixed_k}, we fix the value of the parameter $k$ and we vary the value of the parameter $s$. In Proposition \ref{prop:unfairness} we showed that for $k=2$, a node always gains voting power with splitting. This result holds without any additional assumptions on the weight distribution of the nodes in the network.  We run  simulations with $k = 20$ and we split the heaviest node into $r$ nodes; $r$ ranging from $2$ to $200$.\begin{figure}[ht]
	\centering
	\includegraphics[scale=0.5]{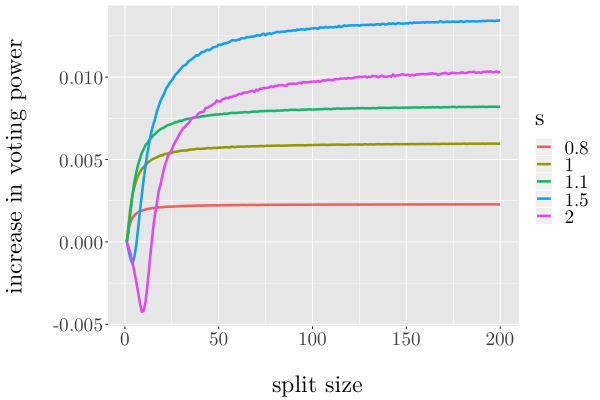}
	\caption{The effect of multiple splitting on the voting power of a node for $k = 20$ and different $s$ in a network of size $1000$.}\label{fig:Effect_of_splitting_on_voting_power-fixed_k}
\end{figure}
We keep  the network size equal to $1000$ and vary the parameter $s$ in the set $\{0.8, 1, 1.1, 1.5, 2\}$. For each different value of the parameter $s$ we ran $100\,000$ simulations of the voting scheme $(id, 1)$. Several conjectures can be made from Figure \ref{fig:Effect_of_splitting_on_voting_power-fixed_k}. It seems that if the parameter $k$ is equal to $20$, we can even have a drop in the voting power for small values of the parameter $r$. This drop appears to be more significant the bigger the parameter $s$ is. But if we split into more nodes (we set $r$ to be sufficiently high),  it seems that splitting gives us more voting power, and the gain is bigger for values of $s$ larger than $1$. This suggests that it is possible to have robustness to splitting into $r$ nodes for $r$ smaller than some threshold $\delta$, and robustness to merging of $r$ nodes for $r > \delta$.

The simulations presented in Figure \ref{fig:Effect_of_splitting_on_voting_power-different_k_and_s} show the change of the voting power of a node after it splits into multiple nodes for different values of the parameters $k$ and $s$.  As in the previous simulation, we consider a network size of $1000$ and assume that the first node splits into $r$ different nodes (where $r$ is again ranging from $2$ to $200$). For each combination of values of parameters $k$ and $s$, we ran $100\,000$ simulations. Our results suggest that for $s \le 1$, we always gain voting power with additional splittings.
On the other hand, if $s > 1$ then the voting power's behavior depends even more on the precise value of $k$. It seems that for small $k$, we still cannot lose voting power by splitting, but for $k$ sufficiently large it seems that there is a region where the increase in voting power is negative. 

\begin{figure}[ht]
	\centering
	\includegraphics[scale=0.45]{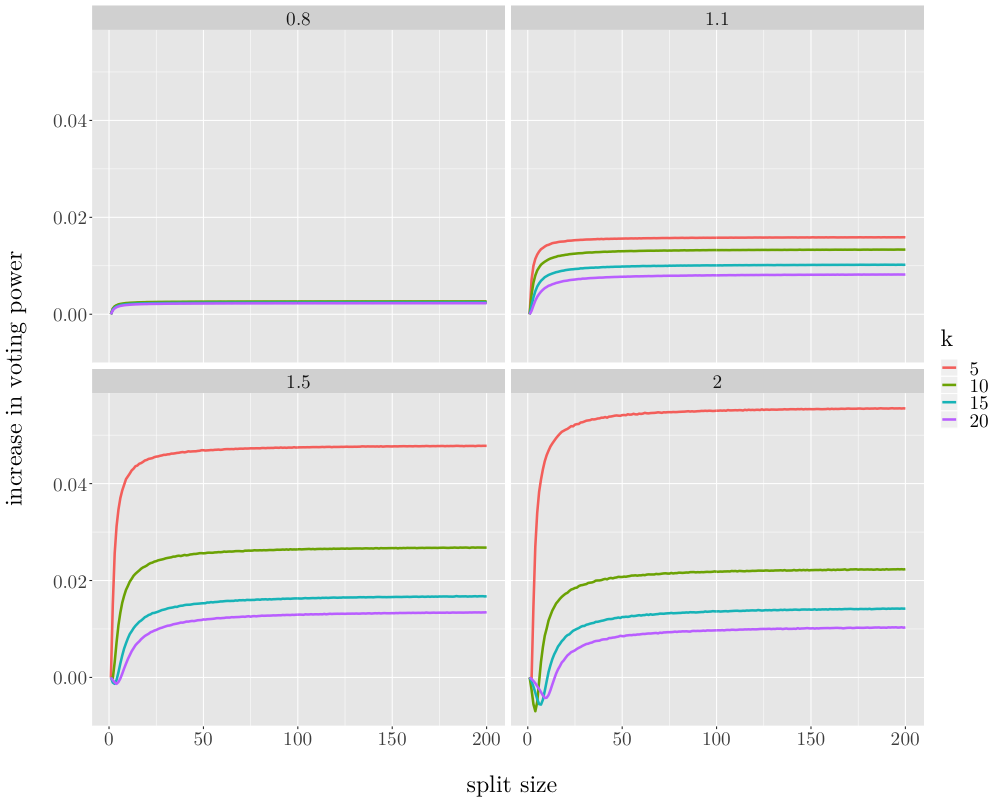}
	\caption{The effect of multiple splitting on the voting power of the heaviest node in a network of size $1000$ ($s \in \{0.8, 1.1, 1.5, 2\}$, $k \in \{5, 10, 15, 20\}$).}\label{fig:Effect_of_splitting_on_voting_power-different_k_and_s}
\end{figure}

\begin{QUE}
How does the increase in voting power of the heaviest node depends on $k$, $s$, and $N$? For which choices of these parameters the increase in voting power is negative?
\end{QUE}

The above simulation study is far from complete, but we believe that our results already show the model's richness. In the simulations, we only split the heaviest node.
\begin{QUE}
How does the increase in voting power of the node of rank $M$ depends on $M$, $k$, $s$, and $N$?
\end{QUE}

In a more realistic model, not only one but all nodes may simultaneously optimize their voting power. This is particularly interesting in situations that are not robust to splitting. We believe that it is reasonable that nodes may adapt their strategy from time to time to optimize their voting power in such a situation. This simultaneous splitting or merging of the nodes may lead to a periodic behavior of the nodes or convergence to a stable situation, where none of the nodes has an incentive to split or merge.

\begin{QUE}
Construct a multi-player game where the aim is to maximize the voting power. Do the corresponding weights always converge to a situation in which the voting scheme is fair?
\end{QUE}

\section{Appendix}\label{sec:Appendix}

In this section, we provide proofs of several results that we use throughout the paper.
\subsection{Auxiliary results for Section \ref{sec:Dist_of_sample_size}}

The first result is proved by induction.
\begin{LM}\label{lm:telescope_equality}
	Let $w \in \bbN$ and let $a_1, a_2, \ldots, a_w, b_1, b_2, \ldots, b_w \in \bbR$, then
	\begin{equation*}
		\left(a_1 a_2 \cdots a_w\right) - \left(b_1 b_2 \cdots b_w \right)= \sum_{j = 1}^w a_1 a_2 \cdots a_{j - 1} (a_j - b_j) b_{j + 1} \cdots b_w.
	\end{equation*}
\end{LM}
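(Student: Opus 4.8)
The plan is to prove the identity by induction on $w$, as the text announces, although I will note at the end that the right-hand side telescopes, which both explains the shape of the statement and furnishes a one-line alternative. The only subtlety anywhere in the argument is the consistent use of the convention that an empty product equals $1$, so that the boundary terms ($j = 1$, with empty prefix $a_1 \cdots a_{j-1}$, and $j = w$, with empty suffix $b_{j+1} \cdots b_w$) are read correctly.

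For the base case $w = 1$, both sides reduce to $a_1 - b_1$: the sum on the right consists of the single index $j = 1$, whose prefix and suffix products are both empty. For the inductive step, assuming the claim for some $w \ge 1$, I would pass to $w + 1$ by inserting the mixed term $a_1 \cdots a_w\, b_{w+1}$ and regrouping:
\[
a_1 \cdots a_{w+1} - b_1 \cdots b_{w+1} = a_1 \cdots a_w\, (a_{w+1} - b_{w+1}) + \bigl(a_1 \cdots a_w - b_1 \cdots b_w\bigr) b_{w+1}.
\]
The first summand is precisely the $j = w+1$ term of the target sum (its suffix product is empty). To the bracketed difference I apply the inductive hypothesis and then multiply through by $b_{w+1}$; this promotes each suffix $b_{j+1} \cdots b_w$ to $b_{j+1} \cdots b_{w+1}$, reproducing exactly the terms $j = 1, \ldots, w$ of the sum for $w+1$. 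Adding the two contributions gives the full right-hand side, completing the induction.

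I do not expect a genuine obstacle here; the argument is routine, and the only place demanding care is the bookkeeping of the empty products at the two endpoints. As a sanity check and a cleaner variant, one may set $c_j := a_1 \cdots a_j\, b_{j+1} \cdots b_w$ for $0 \le j \le w$; then the $j$-th summand on the right equals $c_j - c_{j-1}$, so the sum collapses to $c_w - c_0 = a_1 \cdots a_w - b_1 \cdots b_w$, which is the desired left-hand side.
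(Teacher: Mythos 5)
Your induction is correct and coincides with the paper's own argument: the same base case $w=1$ and the same inductive step that isolates the $j=w+1$ term $a_1\cdots a_w(a_{w+1}-b_{w+1})$ and applies the hypothesis to the remaining sum after factoring out $b_{w+1}$. The telescoping remark with $c_j = a_1\cdots a_j\, b_{j+1}\cdots b_w$ is a nice one-line alternative, but the core proof matches the paper's.
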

\begin{PROP}\label{prop:equiv_of_cvg_in_l-1_and_l-infty}
	Let $(P^{(n)})_{n \in \bbN}$, $P^{(n)} = (p_i^{(n)})_{i \in \bbN}$, be a sequence of probability distributions on $\bbN$, and let $P^{(\infty)} = (p_i^{(\infty)})_{i \in \bbN}$ be  a probability distribution on $\bbN$. Then, the following statements are equivalent:
	\begin{enumerate}[(a)]
		\item $\norm[1]{P^{(n)} - P^{(\infty)}}_{\infty} = \sup_{i \in \bbN} \aps{p_i^{(n)} - p_i^{(\infty)}} \xrightarrow[n \rightarrow \infty]{} 0$,
		\item $\norm[1]{P^{(n)} - P^{(\infty)}}_1 = \sum_{i = 1}^{\infty} \aps{p_i^{(n)} - p_i^{(\infty)}} \xrightarrow[n \rightarrow \infty]{} 0$.
	\end{enumerate}
\end{PROP}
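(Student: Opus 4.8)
The plan is to prove the two implications separately, with the easy direction being (b) $\Rightarrow$ (a) and the substantive content lying entirely in (a) $\Rightarrow$ (b). For (b) $\Rightarrow$ (a) I would simply invoke the elementary inequality $\norm[1]{b}_{\infty} \le \norm[1]{b}_1$, valid for any real sequence $b = (b_i)_{i \in \bbN}$ because the supremum of the $\aps{b_i}$ never exceeds their sum; applying it to $b = P^{(n)} - P^{(\infty)}$ gives the claim with no use of the probabilistic structure.

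For the converse I would follow the idea behind Scheffé's lemma. Writing $d_i^{(n)} := p_i^{(n)} - p_i^{(\infty)}$, assumption (a) yields in particular the pointwise convergence $d_i^{(n)} \to 0$ for each fixed $i$. The crucial extra ingredient, and the place where the hypothesis that the $P^{(n)}$ and $P^{(\infty)}$ are \emph{probability} distributions genuinely enters, is the cancellation $\sum_{i} d_i^{(n)} = 1 - 1 = 0$. I would then exploit the elementary identity $\aps{x} = x + 2(x)^-$, where $(x)^- := \max\{-x, 0\}$ denotes the negative part, to rewrite
\begin{equation*}
	\norm[1]{P^{(n)} - P^{(\infty)}}_1 = \sum_{i = 1}^{\infty} d_i^{(n)} + 2 \sum_{i = 1}^{\infty} (d_i^{(n)})^- = 2 \sum_{i = 1}^{\infty} (d_i^{(n)})^-,
\end{equation*}
so that the whole problem reduces to showing that the sum of the negative parts tends to zero.

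The final and central step would be a dominated-convergence argument for the non-negative terms $(d_i^{(n)})^-$. Here I would observe the uniform domination $(d_i^{(n)})^- = \max\{p_i^{(\infty)} - p_i^{(n)}, 0\} \le p_i^{(\infty)}$, and note that $(p_i^{(\infty)})_{i \in \bbN}$ is summable since it sums to $1$. Combined with the pointwise convergence $(d_i^{(n)})^- \to 0$, the dominated convergence theorem with respect to the counting measure on $\bbN$ then gives $\sum_i (d_i^{(n)})^- \to 0$, finishing the proof. The main obstacle, and the only place beyond routine estimates, is precisely this interchange of the limit in $n$ with the infinite sum: it cannot be carried out by $\ell^\infty$ bounds alone (a uniformly small sequence need not be $\ell^1$-small), and it is exactly the probability normalization, through the cancellation above and the summable dominating sequence $p_i^{(\infty)}$, that makes the interchange legitimate.
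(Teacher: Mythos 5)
Your proof is correct, but it takes a genuinely different route from the paper. The paper proves (a) $\Rightarrow$ (b) by a direct truncation argument: it picks $n_0$ with $\sum_{i = 1}^{n_0} p_i^{(\infty)} > 1 - \varepsilon$, uses the sup-norm hypothesis to make the head $\sum_{i = 1}^{n_0} \aps{p_i^{(n)} - p_i^{(\infty)}}$ smaller than $\varepsilon$ for large $n$, deduces from the normalization $\sum_i p_i^{(n)} = 1$ that the tail mass of $P^{(n)}$ beyond $n_0$ is at most $2\varepsilon$, and adds the two tail contributions to get $\norm[1]{P^{(n)} - P^{(\infty)}}_1 < 4\varepsilon$. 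You instead run a Scheff\'e-type argument: the cancellation $\sum_i (p_i^{(n)} - p_i^{(\infty)}) = 0$ reduces the $\ell^1$ norm to twice the sum of the negative parts, which are dominated by the summable sequence $(p_i^{(\infty)})_i$, and dominated convergence for the counting measure finishes the job. Both proofs use the probability normalization in an essential way (the paper to control the tail of $P^{(n)}$, you to get the cancellation and the dominating sequence), and both are complete; the one small step you should make explicit is that the termwise identity $\aps{d_i^{(n)}} = d_i^{(n)} + 2(d_i^{(n)})^-$ may be summed because $\sum_i (d_i^{(n)})^- \le \sum_i p_i^{(\infty)} = 1 < \infty$ and the partial sums of $\sum_i d_i^{(n)}$ converge to $0$. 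Your route buys a strictly stronger statement -- only pointwise convergence $p_i^{(n)} \to p_i^{(\infty)}$ is used, not uniformity -- at the cost of invoking dominated convergence, whereas the paper's argument is entirely elementary $\varepsilon$-bookkeeping but genuinely needs nothing beyond the stated hypothesis.
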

\begin{proof}
	(b) $\Rightarrow$ (a): This follows immediately from $\sup_{i \in \bbN} \aps{p_i^{(n)} - p_i^{(\infty)}} \le \sum_{i = 1}^{\infty} \aps{p_i^{(n)} - p_i^{(\infty)}}$.
	
\noindent
(a) $\Rightarrow$ (b): Let $\varepsilon > 0$. Choose $n_0 = n_0(\varepsilon)$ such that
	\begin{equation}\label{eq:tail_of_p^inf}
		\sum_{i = 1}^{n_0} p_i^{(\infty)} > 1 - \varepsilon.
	\end{equation}
	This can be done because $P^{(\infty)}$ is a probability distribution on $\bbN$. Furthermore, let $n_1 = n_1(\varepsilon, n_0) \in \bbN$ be such that for every $n \ge n_1$ we have
	\begin{equation}\label{eq:l^inf_bound}
		\norm[1]{P^{(n)} - P^{(\infty)}}_{\infty} < \frac{\varepsilon}{n_0}.
	\end{equation}
	Using this we get for all $n \ge n_1$:	\begin{equation}\label{eq:start_of_l^1_bound}
		\sum_{i = 1}^{n_0} \aps{p_i^{(n)} - p_i^{(\infty)}} \le n_0 \cdot \norm[1]{P^{(n)} - P^{(\infty)}}_{\infty} < n_0 \cdot \frac{\varepsilon}{n_0} = \varepsilon.
	\end{equation}
	On the other hand, we have that for all $n \ge n_1$:
	\begin{equation}\label{eq:tail_of_p^n}
		\sum_{i = n_0 + 1}^{\infty} p_i^{(n)} < \varepsilon + \sum_{i = 1}^{n_0} (p_i^{(\infty)} - p_i^{(n)}) \le \varepsilon + \sum_{i = 1}^{n_0} \aps{p_i^{(\infty)} - p_i^{(n)}} < 2\varepsilon,
	\end{equation}
	where in the first inequality we used equation \eqref{eq:tail_of_p^inf} together with the fact that $\sum_{i = 1}^{\infty} p_i^{(n)} = 1$ and in the last inequality we used equation \eqref{eq:start_of_l^1_bound}. Combining Equations \eqref{eq:tail_of_p^n} and \eqref{eq:tail_of_p^inf} we get
\begin{equation}\label{eq:tail_of_l^1_bound}
		\sum_{i = n_0 + 1}^{\infty} \aps{p_i^{(n)} - p_i^{(\infty)}} \le \sum_{i = n_0 + 1}^{\infty} p_i^{(n)} + \sum_{i = n_0 + 1}^{\infty} p_i^{(\infty)} < 2\varepsilon + \varepsilon = 3\varepsilon,
	\end{equation}
	for all $n \ge n_1$. Finally, we have
	\begin{equation*}
		\norm[1]{P^{(n)} - P^{(\infty)}}_1 = \sum_{i = 1}^{n_0} \aps{p_i^{(n)} - p_i^{(\infty)}} + \sum_{i = n_0 + 1}^{\infty} \aps{p_i^{(n)} - p_i^{(\infty)}} < \varepsilon + 3\varepsilon = 4\varepsilon,
	\end{equation*}
	where we used equations \eqref{eq:start_of_l^1_bound} and \eqref{eq:tail_of_l^1_bound}. This proves that $\norm[1]{P^{(n)} - P^{(\infty)}}_1 \xrightarrow[n \rightarrow \infty]{} 0$ assuming that $\norm[1]{P^{(n)} - P^{(\infty)}}_{\infty} \xrightarrow[n \rightarrow \infty]{} 0$, which is exactly what we wanted to prove.
\end{proof}

\subsection{Auxiliary results for Section \ref{sec:Asymptotic_fairness}}

\begin{PROP}\label{prop:log_ineq}
	Let $x, y > 0$ such that $x + y < 1$. Then
	\begin{equation}\label{eq:log_ineq}
		1 + \frac{\log(1 - x)}{x} + \frac{\log(1 - y)}{y} > \frac{\log(1 - (x + y))}{x + y}.
	\end{equation}
\end{PROP}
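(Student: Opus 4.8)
The plan is to reduce the inequality to a statement about superadditivity of a power series with nonnegative coefficients. First I would recall the expansion
\[
\frac{\log(1 - t)}{t} = -\sum_{n = 0}^{\infty} \frac{t^n}{n + 1} = -1 - \sum_{n = 1}^{\infty} \frac{t^n}{n + 1}, \qquad t \in (0, 1),
\]
which is valid and absolutely convergent since $x, y, x + y \in (0, 1)$. Writing $G(t) := \sum_{n = 1}^{\infty} \frac{t^n}{n + 1}$, this says $\frac{\log(1 - t)}{t} = -1 - G(t)$, where $G$ has strictly positive coefficients, $G(0) = 0$, and $G(t) > 0$ for $t \in (0, 1)$.

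Substituting this identity into \eqref{eq:log_ineq} for each of the three arguments $x$, $y$, and $x + y$, the four constant terms (the explicit $1$ together with the three $\pm 1$'s coming from the expansions) cancel exactly, and the claim \eqref{eq:log_ineq} becomes precisely
\[
G(x + y) > G(x) + G(y),
\]
that is, strict superadditivity of $G$ on the relevant range.

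To prove this, I would expand $G(x + y)$ termwise by the binomial theorem,
\[
G(x + y) = \sum_{n = 1}^{\infty} \frac{1}{n + 1} \sum_{j = 0}^{n} \binom{n}{j} x^j y^{n - j},
\]
and isolate the extreme terms $j = 0$ and $j = n$, which reassemble into $G(x) + G(y)$. Because $x, y > 0$, every remaining cross term (those with $1 \le j \le n - 1$) is strictly positive; already the $n = 2$ contribution produces a surplus $\tfrac{1}{3} \cdot 2 x y > 0$, so the inequality is strict. The rearrangement of this doubly-indexed series is legitimate since all terms are nonnegative.

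I expect the computation to be entirely routine once the power series is in hand. The only points needing care are the bookkeeping in the reduction step — verifying that the lone additive constant $1$ in \eqref{eq:log_ineq} is exactly what is needed for all constants to cancel — and the justification for manipulating the nonnegative double series; neither is a serious obstacle, and I would regard the power-series reformulation itself as the one genuinely decisive idea.
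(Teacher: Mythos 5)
Your proof is correct, and it takes a genuinely different route from the paper's. The paper fixes $y$, checks that the difference $g(x,y)$ of the two sides vanishes as $x \to 0$, and then shows $x \mapsto g(x,y)$ is strictly increasing by writing $\partial g/\partial x = h(x+y) - h(x)$ for $h(x) = \frac{\log(1-x)}{x^2} + \frac{1}{x(1-x)}$ and proving $h' > 0$, which in turn rests on the auxiliary inequality $\log(1-x) < \frac{x(3x-2)}{2(1-x)^2}$ (Equation \eqref{eq:key_ineq}). Your reduction to strict superadditivity of $G(t) = \sum_{n \ge 1} \frac{t^n}{n+1}$ is exact — the constants do cancel as you claim, since the left side becomes $-1 - G(x) - G(y)$ and the right side $-1 - G(x+y)$ — and the binomial expansion argument is airtight: the $j=0$ and $j=n$ terms reassemble into $G(y) + G(x)$, the cross terms are strictly positive (already $\frac{2xy}{3} > 0$ at $n=2$), and the rearrangement is licensed by nonnegativity. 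Your argument is shorter, avoids any differentiation, and makes the mechanism transparent (superadditivity of a power series with positive coefficients vanishing to second order at $0$). What the paper's calculus route buys instead is the inequality \eqref{eq:key_ineq} itself, which it reuses in the proofs of Lemma \ref{lem:Lagrange_multipliers} and Proposition \ref{prop:tau_r_is_inc}; your approach would leave that lemma still to be established for those other results.
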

\begin{proof}
	Define
	\begin{equation*}
		g(x, y) := 1 + \frac{\log(1 - x)}{x} + \frac{\log(1 - y)}{y} - \frac{\log(1 - (x + y))}{x + y}.
	\end{equation*}
	We now show that $g(x, y) > 0$ for all $x, y > 0$ such that $x + y < 1$. Let $y \in (0, 1)$ be arbitrary but fixed. Notice  that
	\begin{align*}
		\lim_{x \to 0} g(x, y)
		& = 0.
	\end{align*}
	Hence, to prove that $g(x, y) > 0$ for all $x \in (0, 1 - y)$ (for fixed $y$) it is sufficient to show that $x \mapsto g(x, y)$ is strictly increasing on $(0, 1 - y)$.  We have
	\begin{align*}
		\frac{\partial g}{\partial x} (x, y)
		& = \frac{\log(1 - (x + y))}{(x + y)^2} + \frac{1}{(x + y)(1 - (x + y))} - \frac{\log(1 - x)}{x^2} - \frac{1}{x(1 - x)} \\
		& = h(x + y) - h(x)
	\end{align*}
	for
	\begin{equation*}
		h(x) := \frac{\log(1 - x)}{x^2} + \frac{1}{x(1 - x)}.
	\end{equation*}
	Therefore, it is enough to show that $h(x)$ is a strictly increasing function on $(0, 1)$ since then (for $y \in (0, 1)$ and $x \in (0, 1 - y)$) we would have $\frac{\partial g}{\partial x}(x, y) = h(x + y) - h(x) > 0$. We verify that $h(x)$ is strictly increasing on $(0, 1)$ by showing that $h'(x) > 0$ on $(0, 1)$. We have that
	\begin{equation*}
		h'(x) = \frac{1}{x^3} \OBL{\frac{x(3x - 2)}{(1 - x)^2} - 2\log(1 - x)}.
	\end{equation*}
	Hence, it remains to prove that	\begin{equation}\label{eq:key_ineq}
		\log(1 - x) < \frac{x(3x - 2)}{2(1 - x)^2}.
	\end{equation}
One way to see this is to prove that	
\begin{equation}
\log(1 - x) < -x - \frac{x^2}{2} < \frac{x(3x - 2)}{2(1 - x)^2}.
\end{equation}
As this is  basic analysis we omit the details.
\end{proof}
\begin{LM}\label{lem:Lagrange_multipliers}
	Let $p \in (0, 1)$ and let $D = \{x_1, x_2, \ldots, x_r \in (0, 1) : \sum_{j = 1}^r x_{j} = 1\}$. The function $\FJADEF{g}{D}{\bbR}$ defined by
	\begin{equation*}
		g(x_1, x_2, \ldots, x_r) = \sum_{j = 1}^r \frac{\log(1 - p x_j)}{p x_j}
	\end{equation*}
	has a unique maximum on the set $D$ at the point $(x_1, x_2, \ldots, x_r) = (\frac{1}{r}, \frac{1}{r}, \ldots, \frac{1}{r})$.
\end{LM}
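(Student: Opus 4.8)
The plan is to sidestep the Lagrange-multiplier bookkeeping entirely and reduce the problem to the strict concavity of the single-variable summand, after which Jensen's inequality will pin down both the location and the uniqueness of the maximizer in one stroke. Write $\psi(t) := \frac{\log(1 - pt)}{pt}$, so that $g(x_1, \ldots, x_r) = \sum_{j = 1}^r \psi(x_j)$ and the constraint defining $D$ is simply $\sum_{j = 1}^r x_j = 1$ with each $x_j \in (0, 1)$. If $\psi$ is strictly concave on $(0,1)$, the symmetric point is forced to be the unique maximum.

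First I would establish the strict concavity of $\psi$. The cleanest route is a power-series argument: setting $u := pt \in (0, p) \subset (0, 1)$ and $\phi(u) := \frac{\log(1 - u)}{u}$, the expansion $\log(1 - u) = -\sum_{n \ge 1} u^n/n$ gives
\[
	\phi(u) = -\sum_{m \ge 0} \frac{u^m}{m + 1},
\]
a power series all of whose coefficients are negative. Differentiating termwise on $(0, 1)$ yields $\phi''(u) = -\sum_{m \ge 2} \frac{m(m - 1)}{m + 1} u^{m - 2} < 0$, so $\phi$ is strictly concave; since $\psi(t) = \phi(pt)$ and hence $\psi''(t) = p^2 \phi''(pt)$, the function $\psi$ is strictly concave on $(0, 1)$ as well. (Alternatively, a direct computation reduces the sign condition $\psi''(t) < 0$, with $u = pt$, to $\log(1 - u) < \frac{u(3u - 2)}{2(1 - u)^2}$, which is precisely inequality \eqref{eq:key_ineq} established inside the proof of Proposition \ref{prop:log_ineq}; the power series, however, makes the negativity manifest without any such computation.)

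With strict concavity in hand, Jensen's inequality applied to the points $x_1, \ldots, x_r \in (0, 1)$ yields
\[
	\frac{1}{r} \sum_{j = 1}^r \psi(x_j) \le \psi\OBL{\frac{1}{r} \sum_{j = 1}^r x_j} = \psi\OBL{\frac{1}{r}},
\]
so that $g(x_1, \ldots, x_r) = \sum_{j = 1}^r \psi(x_j) \le r\, \psi(1/r) = g(1/r, \ldots, 1/r)$. Strict concavity moreover forces equality precisely when $x_1 = \cdots = x_r$, and the constraint $\sum_j x_j = 1$ then pins this down to $x_j = 1/r$ for every $j$, giving the unique maximizer claimed. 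Because $D$ is defined by the open conditions $x_j \in (0, 1)$, every competitor already lies in the region of concavity, so no separate boundary analysis is needed and the uniqueness is automatic from the strict equality case in Jensen.

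I expect the only genuine obstacle to be the verification of strict concavity of $\psi$; once that is secured, everything else is an immediate consequence of Jensen. The power-series representation of $\phi$ is precisely what renders this step effortless, replacing a delicate second-derivative sign analysis by an inspection of manifestly negative coefficients.
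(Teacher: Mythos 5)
Your proof is correct, and it takes a genuinely different route from the paper's. The paper proves the lemma by the method of Lagrange multipliers: it derives the first-order conditions, reduces them to the statement that a certain auxiliary function $l(x) = \frac{1}{x(1-px)} + \frac{\log(1-px)}{px^2}$ is strictly increasing (hence injective) on $(0,1)$, which forces all coordinates of a stationary point to be equal, and then checks a second-order condition; the monotonicity of $l$ rests on the same logarithmic inequality \eqref{eq:key_ineq} used in Proposition \ref{prop:log_ineq}. You instead observe that the summand $\psi(t) = \frac{\log(1-pt)}{pt}$ is strictly concave on $(0,1)$ --- made transparent by the expansion $\frac{\log(1-u)}{u} = -\sum_{m \ge 0} \frac{u^m}{m+1}$, whose term-by-term second derivative is manifestly negative inside the radius of convergence --- and then invoke Jensen's inequality with its equality case. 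Your approach buys two things: it bypasses the delicate sign analysis behind \eqref{eq:key_ineq} entirely for this lemma, and it yields the \emph{global} maximality and uniqueness in one step, whereas the Lagrange argument natively identifies only stationary points and requires the extra injectivity and second-order discussion to conclude. The paper's computation, on the other hand, recycles machinery (the inequality \eqref{eq:key_ineq}) that it needs elsewhere anyway, so nothing new has to be established there. Both proofs are complete; yours is arguably the more economical and self-contained one.
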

The proof of Lemma \ref{lem:Lagrange_multipliers}  is a  standard application of Lagranges's multiplier and omitted.

\begin{PROP}\label{prop:tau_r_is_inc}
	Let $p \in (0, 1)$ and let
	\begin{equation*}
		\tau_r(p) = (1 - p) \UGL{r + \frac{r^2 \log(1 - \frac{p}{r})}{p} - \frac{\log(1 - p)}{p} - 1}.
	\end{equation*}
Then, the sequence $(\tau_r(p))_{r \in \bbN}$ is an increasing sequence for all $p \in (0, 1)$ and it holds that
	\begin{equation*}
		\tau(p) = \lim_{r \to \infty} \tau_r(p) = (1 - p) \OBL{-\frac{p}{2} - \frac{\log(1 - p)}{p} - 1}.
	\end{equation*}
\end{PROP}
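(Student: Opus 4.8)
The plan is to treat the two assertions separately and, in both, to isolate the only $r$-dependent part of $\tau_r(p)$. Since $1 - p > 0$ and the summands $-\tfrac{\log(1-p)}{p} - 1$ are constant in $r$, I would write $\tau_r(p) = (1-p)\left[\psi(r) - \tfrac{\log(1-p)}{p} - 1\right]$ with $\psi(r) := r + \tfrac{r^2 \log(1 - p/r)}{p}$. Then monotonicity of $(\tau_r(p))_{r \in \bbN}$ is equivalent to monotonicity of $(\psi(r))_{r \in \bbN}$, and computing $\lim_{r \to \infty}\tau_r(p)$ reduces to finding $\lim_{r \to \infty}\psi(r)$.

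For the limit I would expand $\log(1 - p/r) = -\tfrac{p}{r} - \tfrac{p^2}{2r^2} - \tfrac{p^3}{3r^3} - \cdots$ for large $r$, so that
\begin{equation*}
\psi(r) = r + \frac{r^2}{p}\OBL{-\frac{p}{r} - \frac{p^2}{2r^2} - \frac{p^3}{3r^3} - \cdots} = -\frac{p}{2} - \frac{p^2}{3r} - \cdots \xrightarrow[r \to \infty]{} -\frac{p}{2}.
\end{equation*}
Substituting back gives $\lim_{r \to \infty}\tau_r(p) = (1-p)\OBL{-\tfrac{p}{2} - \tfrac{\log(1-p)}{p} - 1}$, which is the claimed value.

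For monotonicity I would show that the continuous extension $t \mapsto \psi(t) = t + \tfrac{t^2}{p}\log(1 - p/t)$ is strictly increasing on $(p, \infty)$; since every integer $r \ge 1$ satisfies $r > p$, this yields $\psi(r+1) > \psi(r)$ and hence the strict increase of $(\tau_r(p))_r$. A direct differentiation gives
\begin{equation*}
\psi'(t) = 1 + \frac{t}{t-p} + \frac{2t}{p}\log\OBL{1 - \frac{p}{t}},
\end{equation*}
and with the substitution $x = p/t \in (0,1)$ this becomes $1 + \tfrac{1}{1-x} + \tfrac{2}{x}\log(1-x)$. Multiplying by $\tfrac{x}{2} > 0$, positivity of $\psi'$ is equivalent to the inequality $\log(1-x) > -\tfrac{x(2-x)}{2(1-x)}$ for $x \in (0,1)$.

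The main obstacle is this last elementary inequality, but it reduces to a clean one-line check: setting $F(x) := \log(1-x) + \tfrac{x(2-x)}{2(1-x)}$, one has $F(0) = 0$ and $F'(x) = \tfrac{x^2}{2(1-x)^2} > 0$ on $(0,1)$, so $F(x) > 0$ there. (This is in the same spirit as the inequality \eqref{eq:key_ineq} used earlier, though here a lower bound on $\log(1-x)$ is needed rather than the upper bound, so I would prove it directly.) This establishes $\psi' > 0$, completing both claims.
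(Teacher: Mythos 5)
Your proof is correct, and on both halves it takes a genuinely different route from the paper. For the limit, the paper writes $\tau_r(p)$ as a $0/0$ quotient in the variable $1/r$ and applies L'Hospital's rule twice, whereas you expand $\log(1-p/r)$ as a power series in $p/r$ and read off $\psi(r) = -\tfrac{p}{2} + O(1/r)$ directly; both are fine, and yours has the minor bonus of exhibiting the rate of convergence. For monotonicity, the paper works with the same function (there called $\phi$) but proves $\phi' > 0$ on $[1,\infty)$ indirectly: it observes $\lim_{x\to\infty}\phi'(x) = 0$ and then shows $\phi'' < 0$ via the inequality \eqref{eq:key_ineq}, so that $\phi'$ decreases to zero and is therefore positive. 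You instead prove $\psi' > 0$ in one step: the substitution $x = p/t$ turns the claim into the single-variable inequality $\log(1-x) > -\tfrac{x(2-x)}{2(1-x)}$ on $(0,1)$, which follows from $F(0)=0$ and $F'(x) = \tfrac{x^2}{2(1-x)^2} > 0$ (I checked this derivative; it is right). Your version avoids the second derivative, avoids the limit-at-infinity argument, and replaces the paper's upper bound \eqref{eq:key_ineq} on $\log(1-x)$ with a complementary lower bound that is easier to verify — arguably the cleaner of the two arguments, at the cost of not reusing the inequality the paper already needs elsewhere (in Proposition \ref{prop:log_ineq} and Lemma \ref{lem:Lagrange_multipliers}).
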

\begin{proof}
	Let us first show that the sequence $(\tau_r(p))_{r \in \bbN}$ is strictly increasing. For this, it is sufficient to show that for $p\in (0, 1)$ the function
	\begin{equation*}
		\phi(x) := x + \frac{x^2 \log(1 - \frac{p}{x})}{p}
	\end{equation*}
	is strictly increasing on $[1, \infty)$, because the sequence $(\tau_r(p))_{r \in \bbN}$ satisfies $\tau_r(p) = (1 - p) (\phi(r) - \phi(1))$. We will show that $\phi'(x) > 0$ for all $x \in [1, \infty)$. We have
	\begin{equation*}
		\phi'(x) = 1 + \frac{2x\log(1 - \frac{p}{x})}{p} + \frac{x}{x - p}.
	\end{equation*}
	Since $\lim_{x \to \infty} \phi'(x) = 0$ and $\phi'(x)$ is a continuous function on $[1, \infty)$, it is now enough to show that $\phi'(x)$ is strictly decreasing on $[1, \infty)$ to be able to conclude that $\phi'(x) > 0$ for all $x \in [1, \infty)$. Observe that
	\begin{equation*}
		\phi''(x) = \frac{2}{p} \log\OBL{1 - \frac{p}{x}} + \frac{2x - 3p}{(x - p)^2}.
	\end{equation*}
We can now check that $\phi''(x) < 0$ on $[1, \infty)$:
	\begin{align*}
		\phi''(x) < 0
		& \Leftrightarrow \frac{2}{p} \log\OBL{1 - \frac{p}{x}} < \frac{3p - 2x}{(x - p)^2} \\
		& \Leftrightarrow \log\OBL{1 - \frac{p}{x}} < \frac{\frac{p}{x} \OBL{3 \frac{p}{x} - 2}}{2 \OBL{1 - \frac{p}{x}}^2}.
	\end{align*}
	Since $p \in (0, 1)$ and $x \in [1, \infty)$, we have $p/x \in (0, 1)$ so the desired inequality follows from \eqref{eq:key_ineq}. Hence,   $\phi'(x)$ is decreasing and we have that $\phi'(x) > 0$ on $[1, \infty)$, which is exactly what we wanted to prove. 

	Let us now calculate the limit of the sequence $(\tau_r(p))_{r \in \bbN}$. Notice that	\begin{equation}\label{eq:tau_r_p_for_LH}
		\tau_r(p) = (1 - p) \UGL{\frac{1 + \frac{\log(1 - \frac{p}{r})}{\frac{p}{r}}}{\frac{1}{r}} - \frac{\log(1 - p)}{p} - 1}.
	\end{equation}
	Applying L'Hospital's rule twice, we obtain
	\begin{equation*}
		\lim_{x \to 0^+} \frac{1 + \frac{\log(1 - px)}{px}}{x} = \lim_{x \to 0^+} \frac{\frac{-p^2 x}{1 - px} - p \log(1 - px)}{p^2 x^2} = \lim_{x \to 0^+} \frac{-\frac{p^3 x}{(1 - px)^2}}{2p^2 x} = -\frac{p}{2}.
	\end{equation*}
	Plugging this in Equation \eqref{eq:tau_r_p_for_LH} we obtain
	\begin{equation*}
		\tau(p) = \lim_{r \to \infty} \tau_r(p) = (1 - p) \OBL{-\frac{p}{2} - \frac{\log(1 - p)}{p} - 1},
	\end{equation*}
	which concludes the proof.
\end{proof}

\section*{Acknowledgement} 
We wish to thank Serguei Popov for suggesting the name ``greedy sampling'' and the whole IOTA research team for stimulating discussions.

A.\ Gutierrez was supported by the \textit{Austrian Science Fund} (FWF) under project P29355-N35. S.\ \v Sebek was supported by the \textit{Austrian Science Fund} (FWF) under project P31889-N35 and \textit{Croatian Science Foundation} under project 4197. These financial supports are gratefully acknowledged.

\bibliographystyle{abbrv}
\bibliography{AF}

\end{document}